 \title[Breakdown of analyticity in Hartree scattering] 
 {Analyticity and infinite breakdown of regularity in mass-subcritical Hartree scattering}
\author[Gyu Eun Lee]{}
\subjclass{Primary: 35Q55. Secondary: 35Q40, 35B30, 35B40.}
\keywords{Hartree equation, nonlinear Schr\"odinger equation, scattering, wave operator, mass-subcritical, analyticity, illposedness}
\begin{document}

\maketitle

\centerline{\scshape Gyu Eun Lee}
\medskip
{\footnotesize
 \centerline{Department of Mathematics, University of California, Los Angeles}
   \centerline{ Los Angeles, CA 90095, USA}
} 

\bigskip

\begin{abstract}
  We study the asymptotic behavior of solutions to the defocusing mass-subcritical Hartree NLS $iu_t + \Delta u = F(u) = (|x|^{-\gamma}*|u|^2)u$ on $\mathbb{R}^d$, $d\geq 2$, $\frac{4}{3} < \gamma < 2$.
  We show that the scattering problem associated to this equation is analytically well-posed in the weighted spaces $\Sigma = H^1\cap\mcal{F}H^1$ and $\mcal{F}H^1$.
  Furthermore, we show that the same problem fails to be analytically well-posed for data in $L^2$.
  This constitutes an infinite loss of regularity between the scattering problems in weighted spaces and in $L^2$.
  This further develops an earlier investigation initiated by the author in which a finite breakdown of regularity was proved for the $L^2$ scattering problem for the mass-subcritical NLS with power nonlinearity $F(u) = |u|^pu$.
\end{abstract}


\section{Introduction}\label{sec:intro}

Our primary object of interest in this paper is the \ita{defocusing mass-subcritical Hartree equation} (HNLS)
\begin{equation}\label{eqn:HNLS}
	iu_t + \Delta u = (|x|^{-\gamma}*|u|^2)u, ~u = u(t,x)\in\bb{C}, ~(t,x)\in\bb{R}\times\bb{R}^d,
\end{equation}
where $0 < \gamma < 2$, $d\geq 1$.
This equation, alongside its close cousin the \ita{defocusing mass-subcritical nonlinear Schr\"odinger equation} (pNLS)
\begin{equation}\label{eqn:pNLS}
	iu_t + \Delta u = |u|^pu, ~0 < p < \frac{4}{d},
\end{equation}
have been subjects of intense study from both physical and mathematical perspectives over the past few decades.
The term \ita{defocusing} refers to the positive sign of the potential $|x|^{-\gamma}*|u|^2$ for HNLS, $|u|^p$ for pNLS.
The term \ita{mass-subcritical} refers to the condition $0 < \gamma < 2$ for HNLS and $0<p<\frac{4}{d}$ for pNLS, under which the scaling symmetry of the equations is subcritical with respect to the $L^2$ norm.
It is well-known that both equations obey conservation of mass
\begin{equation}\label{eqn:mass_conservation}
	M(u(t)) = \int_{\bb{R}^d} |u(x)|^2~dx = M(u(0))
\end{equation}
and conservation of energy
\begin{equation}\label{eqn:energy_conservation}
	E(u(t)) = \frac{1}{2}\int_{\bb{R}^d}|\nabla u(x)|^2~dx + Q(u(t)) = E(u(0)),
\end{equation}
where the potential energy term $Q(u(t))$ is given by
\begin{equation}\label{eqn:HNLS_potential_energy}
	Q(u(t)) = \frac{1}{4}\int_{\bb{R}^d} (|x|^{-\gamma}*|u|^2)|u(x)|^2~dx	
\end{equation}
in the case of HNLS and by
\begin{equation}\label{eqn:pNLS_potential_energy}
	Q(u(t)) = \frac{1}{p+2} \int_{\bb{R}^d} |u(x)|^{p+2}~dx	
\end{equation}
in the case of pNLS.

In this work, we study aspects of the asymptotic behavior of solutions to HNLS in the spaces $L^2(\bb{R}^d)$, $\mcal{F}H^1(\bb{R}^d) = L^2(\bb{R}^d;|x|^2dx)$, and $\Sigma = H^1(\bb{R}^d)\cap\mcal{F}H^1(\bb{R}^d)$.
Here $\mcal{F}$ denotes the Fourier transform.
The typical conjecture for the asymptotic behavior of solutions to defocusing partial differential equations of dispersive type like HNLS and pNLS is \ita{scattering}, i.e. convergence to a free (linear) evolution.
This conjecture consists of two sub-questions, which are \ita{asymptotic completeness} and the \ita{existence of the wave operators}.

For $X$ a Banach space, we say that asymptotic completeness holds (in forward time) for HNLS(resp. pNLS) if for all initial data $u_0\in X$, there exists $u_+\in X$ such that the global solution $u\in C_{t,\tnm{loc}}X$ to HNLS (resp. pNLS) with Cauchy data $u(t=0) = u_0$ converges to a free evolution in the sense that
\[
	\lim_{t\to +\infty}\|e^{-it\Delta}u(t) - u_+\|_X = 0.
\]
When this occurs, we say that $u(t)$ \ita{scatters} to $u_+$.
In this case we may define the \ita{initial-to-scattering-state operator}
\begin{equation}
	\mcal{S}:X\to X: \mcal{S}(u_0) = u_+.
\end{equation}
Similarly, we say that the (forward) wave operator exists on $X$ for HNLS (resp. pNLS) if for all final states $u_+\in X$, there exists a unique global solution $u\in C_{t,\tnm{loc}}X$ to HNLS (resp. pNLS) which scatters to $u_+$.
When this holds, we may define the \ita{wave operator}
\begin{equation}
	\mcal{W}:X\to X:\mcal{W}(u_+) = u(t=0).
\end{equation}
When the two operators exist, they are necessarily inverses of each other.
Analogous definitions can be made in backwards time, i.e. as $t\to-\infty$.
When asymptotic completeness holds and the wave operator exists on $X$ (in both forward and backward time), we say that \ita{scattering} holds for the equation on $X$.

For both HNLS and pNLS, there is a considerable disparity in our understanding of the asymptotic behavior of solutions having initial/final states in $L^2$ and of solutions with initial/final states in weighted spaces.
The vast majority of the scattering theory for these equations is built under the assumption that the data lies in $\Sigma, \mcal{F}H^1$, or a similar weighted space.
In contrast, there is nearly no literature on the behavior of solutions for either equation under the weaker assumption of $L^2$ initial/final data, in spite of the fact that the equations are globally well-posed in $L^2$ and obey conservation of $L^2$ norm.
Therefore we are in a curious situation where we know that all solutions to HNLS and pNLS with $L^2$ initial data are global, but we have no understanding of the asymptotic behavior of a vast majority of such solutions.

In this paper, we continue investigations initiated by the author in \cites{Lee21}, in which we sought to give an explanation for this disparity in the case of pNLS.
Our main result was as follows:
\begin{thm}[\cites{Lee21}, Theorem 1.3]\label{thm:pNLS_main}
	Let $d\geq 1$, and consider pNLS with $\alpha(d) < p < \frac{4}{d}$, where
	\[
		\alpha(d) = \frac{2-d + \sqrt{(d-2)^2 + 16d)}}{2d}	
	\]
	denotes the Strauss exponent.
	Then:
	\begin{enumerate}
		\item The scattering operators $\mcal{S},\mcal{W}$ for pNLS are well-defined as maps $\Sigma\to L^2$, and are maximally regular in the sense that they are H\"older continuous of order $1+p$ at $0\in \Sigma$, but not of any higher order.
		\item There exists $\beta = \beta(d,p)\in (0,p)$ such that $\mcal{S},\mcal{W}$ admit no extensions to maps $L^2\to L^2$ which are H\"older continuous of order $1+\beta$ at $0\in L^2$.
	\end{enumerate}
\end{thm}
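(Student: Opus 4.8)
The plan is as follows. Since the linear part of the scattering map is the identity, write $\mcal{S} = \mathrm{Id} + \mcal{N}$, where for initial data $u_0$ with corresponding pNLS solution $u$ one sets
\[
  \mcal{N}(u_0) = -i\int_0^\infty e^{-is\Delta}F(u(s))\,ds,
\]
and interpret ``H\"older continuous of order $r$ at $0$'' as a statement about the genuinely nonlinear piece $\mcal{N}$. Because $F(u) = |u|^pu$ is homogeneous of degree $1+p$, part (1) asserts that $\mcal{N}$ is H\"older of order exactly $1+p$ at $0$ as a map $\Sigma\to L^2$, and part (2) that no extension of $\mcal{N}$ to $L^2\to L^2$ is H\"older of order $1+\beta$ at $0$ for a suitable $\beta<p$. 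The wave operator is handled mutatis mutandis ($t=0\leftrightarrow t=+\infty$ and a sign), so I discuss only $\mcal{S}$, in forward time. For the upper bound in (1): well-definedness on $\Sigma$ (hence, via $\Sigma\hookrightarrow L^2$, as a map into $L^2$) is the classical $\Sigma$-scattering theory for pNLS, available precisely for $\alpha(d)<p<\frac4d$. For the quantitative bound I would run the small-data fixed point in a global-in-time norm $X$ built from Strichartz norms together with their pseudoconformal/vector-field-weighted analogues, obtaining $\|u\|_X\lesssim\|u_0\|_\Sigma$ for $\|u_0\|_\Sigma$ small, and then
\[
  \|\mcal{N}(u_0)\|_{L^2}=\Bigl\|\int_0^\infty e^{-is\Delta}F(u)\,ds\Bigr\|_{L^2}\lesssim\|F(u)\|_{N}\lesssim\|u\|_X^{1+p}\lesssim\|u_0\|_\Sigma^{1+p}
\]
in a dual Strichartz norm $N$; the convergence of the time integral is exactly where $p>\alpha(d)$ enters. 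The H\"older difference estimate $\|\mcal{N}(u_0)-\mcal{N}(v_0)\|_{L^2}\lesssim(\|u_0\|_\Sigma+\|v_0\|_\Sigma)^p\|u_0-v_0\|_\Sigma$ follows the same way from $|F(u)-F(v)|\lesssim(|u|^p+|v|^p)|u-v|$.

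The core object for both the sharpness in (1) and the breakdown in (2) is the leading nonlinear term. Expanding $u$ in a Picard series around $e^{it\Delta}u_0$ gives
\[
  \mcal{N}(u_0)=\mcal{I}[u_0]+O(\|u_0\|^{1+2p}),\qquad \mcal{I}[u_0]=-i\int_0^\infty e^{-is\Delta}F(e^{is\Delta}u_0)\,ds,
\]
a map homogeneous of degree $1+p$, the remainder being controlled in the appropriate norms by the same Strichartz machinery. The key algebraic fact is that $\mcal{I}$ never vanishes: for $\phi\in\Sigma\setminus\{0\}$,
\[
  \langle\mcal{I}[\phi],\phi\rangle_{L^2}=-i\int_0^\infty\langle F(e^{is\Delta}\phi),e^{is\Delta}\phi\rangle\,ds=-i\int_0^\infty\|e^{is\Delta}\phi\|_{L^{p+2}}^{p+2}\,ds,
\]
which is $-i$ times a strictly positive, finite number --- finite because $p>\alpha(d)>\frac2d$ makes $\|e^{is\Delta}\phi\|_{L^{p+2}}^{p+2}\lesssim\langle s\rangle^{-pd/2}$ integrable, strictly positive because $e^{is\Delta}\phi\not\equiv0$ for every $s$. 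Hence with $u_0=\lambda\phi$ and $\lambda\to0^+$ one gets $\|\mcal{N}(\lambda\phi)\|_{L^2}=\lambda^{1+p}\|\mcal{I}[\phi]\|_{L^2}+O(\lambda^{1+2p})\sim\lambda^{1+p}$, which rules out H\"older continuity of any order $1+p+\varepsilon$ at $0$ and finishes (1).

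For (2) I would exploit that the dilation $\phi\mapsto\phi^{(\mu)}:=\mu^{-d/2}\phi(\cdot/\mu)$ preserves the $L^2$ norm but not the equation. A change of variables in the integral defining $\mcal{I}$ gives $\mcal{I}[\phi^{(\mu)}]=\mu^{\,2-(p+1)d/2}(\mcal{I}[\phi])(\cdot/\mu)$, so
\[
  \|\mcal{I}[\lambda\phi^{(\mu)}]\|_{L^2}=\lambda^{1+p}\,\mu^{\,2-pd/2}\,\|\mcal{I}[\phi]\|_{L^2},\qquad \|\lambda\phi^{(\mu)}\|_{L^2}=\lambda\|\phi\|_{L^2},
\]
with exponent $2-pd/2>0$ by mass-subcriticality, while $\|\lambda\phi^{(\mu)}\|_\Sigma\sim\lambda\mu$ for $\mu$ large. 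Now couple $\mu=\lambda^{-\theta}$, $\theta>0$, and let $\lambda\to0^+$: the data tend to $0$ in $L^2$ while staying in $\Sigma$, any $L^2$-extension of $\mcal{S}$ agrees with the genuine $\mcal{S}$ there, and H\"older continuity of order $1+\beta$ at $0\in L^2$ would force $1+p-\theta(2-pd/2)\ge 1+\beta$. I would then pick $\theta$ in the window carved out by two competing requirements: $\theta<1$, so that $\|\lambda\phi^{(\mu)}\|_\Sigma$ stays bounded and the expansion of $\mcal{N}$ remains valid; and $\theta<p/(2-pd/2)$, so that $\mcal{I}[\lambda\phi^{(\mu)}]$ dominates the Picard remainder. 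Balancing these yields, for every $\beta$ with $\beta_0<\beta<p$ where $\beta_0:=\max\{0,\tfrac{p(d+2)-4}{2}\}$, a choice of $\theta$ violating the inequality as $\lambda\to0$; and $\beta_0<p$ precisely because $p<\frac4d$, so such $\beta$ exist. This is the $\beta=\beta(d,p)$ in the statement, and the reason it is stated for some $\beta<p$ rather than all of $(0,p)$ is exactly the constraint $\theta<p/(2-pd/2)$. The main obstacle, and the step requiring real care, is making the leading-order expansion $\mcal{N}(u_0)=\mcal{I}[u_0]+(\text{higher order})$ quantitative \emph{uniformly in the dilation parameter} $\mu$, in norms compatible with the mass-invariant scaling rather than the $\Sigma$ norm; everything else is bookkeeping on top of the standard mass-subcritical scattering theory.
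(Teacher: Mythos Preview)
This theorem is quoted from the author's earlier paper and is not proved in the present paper; what the paper does prove is the Hartree analogue (Section~4), and it states explicitly that the argument there ``proceeds largely along the lines of the proof of the analogous statement for pNLS'' in that earlier work. Comparing against Section~4, your plan matches essentially point for point: isolate the leading Picard term $\mcal{I}[\phi]$, pair with $\phi$ to produce the free potential energy $\int_0^\infty \|e^{is\Delta}\phi\|_{p+2}^{p+2}\,ds$ (the paper does exactly this with the Hartree $Q$), then probe with the two-parameter family $\lambda\phi^{(\mu)}$ and couple $\mu=\lambda^{-\theta}$ (the paper writes $v_{\ve,\sigma}$ with $\ve=\sigma^{-j}$, $j=1/\theta$) and optimize. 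You have also correctly flagged the genuine difficulty: the paper remarks that in the pNLS case a sharpened error estimate---interpolating between $L^2$, $\dot H^1$, and the weighted norm rather than using the crude $\|\cdot\|_\Sigma^{1+2p}$ bound---was needed ``to recover dimensions $d=1,2,3$,'' which is precisely your ``uniformly in $\mu$'' issue.

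One bookkeeping slip: your two constraints on $\theta$ are not consistent with each other or with the stated $\beta_0$. The formula $\beta_0=\max\{0,\tfrac{p(d+2)-4}{2}\}$ is what you get from the constraint $\theta<1$ \emph{alone}. But the naive remainder bound $O(\|u_0\|_\Sigma^{1+2p})$ actually forces $\theta<\frac{p}{pd/2+2p-1}$, not $\theta<\frac{p}{2-pd/2}$; and since $\alpha(d)>2/(d+2)$ this is strictly smaller than $1$, hence binding, and yields a larger $\beta_0$. Getting down to your $\beta_0$ requires exactly the sharpened error estimate you allude to at the end. This does not affect the correctness of the approach, only the final value of $\beta(d,p)$.
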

Here, we define the notion of H\"older continuity at a point through membership in the pointwise H\"older spaces $C^s(x_0)$, which were introduced in \cites{Andersson97}.
Membership in $C^s(x_0)$ is a necessary condition for the existence of $n$ Gateaux derivatives at $x_0$ and $r$-H\"older continuity of the $n$-th Gateaux derivative, where $n$ is the integer part of $s$ and $r$ its fractional part.
We refer the reader to Definition 2.2 and the appendix of \cites{Lee21} for details.

We interpret this result as a well-posedness result on the scattering problem for pNLS for initial/final data in $\Sigma$, and as an ill-posedness result for the scattering problem for data in $L^2$ in the sense of Bourgain \cites{Bo97}.
It says that any hypothetical extension of the scattering theory of pNLS from $\Sigma$ to $L^2$ must come at a cost.
The corresponding extensions of the scattering operators lose a positive amount of regularity through the extension, and in particular they fail to have the expected regularity $C^{1+p}$ that one would expect from the smoothness of the pNLS nonlinearity $F(u) = |u|^pu$.

Our goal in this work is to extend and expand upon these results in the case of the Hartree equation \eqref{eqn:HNLS}.
We now state the main results of this paper:

\begin{thm}[Analyticity of the Hartree scattering operators]\label{thm:main1}
	Let $d\geq 2$ and $\frac{4}{3} < \gamma < 1$.
	Let $\mcal{T}\in\{\mcal{S},\mcal{W}\}$.
	Then:
	\begin{enumerate}
		\item $\mcal{T}$ is well-defined as a map $\Sigma\to \Sigma$, and is analytic in the sense that for all $u_0\in \Sigma$ and $v\in\Sigma$, $\mcal{T}$ admits the power series expansion
		\[
			\mcal{T}(u_0 + \ve v) = \mcal{T}(u) + \sum_{k=1}^\infty \ve^kw_k
		\]
		for all sufficiently small $\ve> 0$, where $(w_k)\subset \Sigma$ and the series converges in $\Sigma$-norm.
		\item The same result holds with the space $\mcal{F}H^1$ replacing $\Sigma$.
	\end{enumerate}
\end{thm}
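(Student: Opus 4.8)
The plan is to treat the well-definedness of $\mcal{S}$ and $\mcal{W}$ as maps between the indicated weighted spaces by now-standard methods (small-data fixed-point theory, the global a priori bounds furnished by the mass, energy, and pseudo-conformal identities, and patching of local solutions), and to concentrate on the genuinely new content, analyticity. The underlying mechanism is that the relevant solution can be realized as the fixed point of a \emph{cubic polynomial} map on a suitable Banach space: that the map is polynomial, rather than merely smooth, is exactly what upgrades the conclusion from finite regularity (as for the pNLS nonlinearity $|u|^pu$) to full analyticity, and it reflects that the Hartree nonlinearity $F(u)=(|x|^{-\gamma}*|u|^2)u$ is entire as a real map. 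Concretely, for the wave operator one seeks, given a final state $u_+$, a solution of
\[
	u(t) = e^{it\Delta}u_+ + i\int_t^\infty e^{i(t-s)\Delta}F(u(s))\,ds =: \Phi_{u_+}(u),
\]
and then sets $\mcal{W}(u_+)=u(0)$; for $\mcal{S}$ one solves the Cauchy problem $u(t)=e^{it\Delta}u_0 - i\int_0^t e^{i(t-s)\Delta}F(u(s))\,ds$ and sets $\mcal{S}(u_0)=u_0-i\int_0^\infty e^{-is\Delta}F(u(s))\,ds$, the last integral converging (and $e^{-it\Delta}u(t)$ converging in the weighted norm) by the estimates below. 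The work thus divides into (i) building the function space and running the fixed point, and (ii) upgrading continuity of the solution map to analyticity.

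For (i) I would build the working space around the operator $J(t)=x+2it\nabla=e^{it\Delta}\,x\,e^{-it\Delta}$, which commutes with $i\partial_t+\Delta$ and satisfies $\|J(t)u(t)\|_{L^2}=\|\,|x|e^{-it\Delta}u(t)\|_{L^2}$, so that an $L^2$ bound on $J(t)u$ is precisely an $\mcal{F}H^1$ bound on $e^{-it\Delta}u(t)$ (for the $\Sigma$ statement one also carries $\nabla u$, which likewise commutes with the equation). The norm, taken over $[0,\infty)$ for small data and over $[T,\infty)$ with $T$ large in general, combines $L^\infty_t$ of the weighted norm with a finite family of Strichartz norms of $u$, $\nabla u$, and $J(t)u$. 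The three nonlinear inputs are: the commutation identity $J(t)F(u)=(|x|^{-\gamma}*|u|^2)\,J(t)u+\bigl(2i\,\mathrm{Im}(|x|^{-\gamma}*(\bar u\,J(t)u))\bigr)u$, in which the purely $x$-weighted contributions cancel; the factorization of $J(t)$ through $2it\nabla$ conjugated by the quadratic phase $e^{i|x|^2/(4t)}$, which with Gagliardo--Nirenberg gives dispersive decay of $\|u(t)\|_{L^r}$ at a rate approaching the free rate $|t|^{-d(1/2-1/r)}$ whenever $J(t)u\in L^2$; and Hardy--Littlewood--Sobolev, to estimate the convolutions $|x|^{-\gamma}*(\,\cdot\,)$ in the Lebesgue exponents dictated by the Strichartz pairs. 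Assembling these reduces the Duhamel estimate to a quantity bounded by $\|u\|_X^3\int_t^\infty s^{-\mu}\,ds$, and the hypothesis $\gamma>\frac{4}{3}$ is exactly what makes $\mu>1$ with a margin when $d\ge2$, so the integral converges, is small on $[T,\infty)$, and closes a contraction; large data is then covered by solving on $[T,\infty)$ and extending back to $t=0$ with the (polynomial) local theory.

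I expect the main obstacle to be exactly this weighted estimate, and within it the second term of $J(t)F(u)$, which involves the more singular convolution $|x|^{-\gamma}*(\bar u\,J(t)u)$: one has to place this factor in an $L^a$ with $a$ large enough for Hardy--Littlewood--Sobolev to apply while still pairing it against enough time decay from the remaining factor, and it is this balancing — rather than a Morawetz-type argument — that forces $\gamma>\frac{4}{3}$ rather than merely $\gamma>1$.

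Once the contraction is in hand, analyticity is soft. The map $\Phi\colon(u_+,u)\mapsto e^{it\Delta}u_+ + i\int_t^\infty e^{i(t-s)\Delta}F(u(s))\,ds$ is affine in $u_+$ and, since the Duhamel term has the form $B(u,u,\bar u)$ for a bounded trilinear map $B$ on $X$ (by the estimates above), a cubic polynomial in $u$; hence $\Phi$, and with it $G(u_+,u):=u-\Phi_{u_+}(u)$, is entire. At the fixed point, $D_uG=I-D_u\Phi$ is invertible because the contraction estimate gives $\|D_u\Phi\|<1$ on the relevant ball, so the analytic implicit function theorem in Banach spaces makes $u_+\mapsto u(u_+)$ analytic; composing with the bounded linear maps $u\mapsto u(0)$ and $u\mapsto u_0-i\int_0^\infty e^{-is\Delta}F(u(s))\,ds$ gives analyticity of $\mcal{W}$ and $\mcal{S}$ respectively. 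To obtain the explicit series $\mcal{T}(u_0+\ve v)=\mcal{T}(u_0)+\sum_{k\ge1}\ve^k w_k$ of the theorem, substitute $u_+=u_0+\ve v$ and $u=\sum_{k\ge0}\ve^k u_k$ into the fixed-point equation and match powers of $\ve$: the order-$k$ equation determines $u_k$ by applying the bounded inverse of $L:=I-D_u\Phi$ at the base solution to a fixed polynomial in $u_0,\dots,u_{k-1}$, and an induction $\|u_k\|_X\le CR^k$ closed by a Cauchy-product estimate gives convergence of the series for $\ve<1/R$, hence for all sufficiently small $\ve$. The whole argument runs verbatim with $\mcal{F}H^1$ in place of $\Sigma$ after deleting the $\nabla u$ component of the norm.
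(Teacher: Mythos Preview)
Your proposal is correct and shares the paper's underlying analytic machinery: the function space built from $u$, $\nabla u$, and $J(t)u$ in Strichartz norms, the trilinear Hardy--Littlewood--Sobolev estimate, and the $J(t)$-based Gagliardo--Nirenberg decay. The route to analyticity, however, differs. You invoke the analytic implicit function theorem, exploiting that the Duhamel map is a cubic polynomial and that $I-D_u\Phi$ is invertible by the contraction bound; this is essentially the abstract Carles--Gallagher framework the paper cites. The paper instead constructs the coefficients $w_k$ directly from the hierarchy equations and controls their growth with a discrete Gronwall-type lemma (due to Bejenaru--Tao and Kishimoto): if $a_N\le C\sum_{j+k+\ell=N,\ j,k,\ell<N}a_ja_ka_\ell$ then $a_N\lesssim (C_0a_1)^N$. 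The paper explicitly flags this lemma as its point of departure from Carles--Gallagher. The payoff of the explicit construction is concrete coefficient bounds $\|w_k\|_X\le a_k\Lambda^k$ and the explicit form of $w_1,w_2,w_3$, both of which are indispensable for the breakdown analysis in Theorem~\ref{thm:main2}; your IFT route is cleaner for Theorem~\ref{thm:main1} in isolation but would force you to reopen the hierarchy anyway for the sequel. One minor correction: the threshold $\gamma>\tfrac{4}{3}$ does not enter through the spatial balancing of the term $|x|^{-\gamma}*(\bar u\,J(t)u)$ as you suggest --- Lemma~\ref{lem:multilinear_estimate} handles all three pieces of $J(t)F(u)$ uniformly for $0<\gamma<d$ --- but rather through the \emph{time} integrability of the decay factor $|t|^{-2\gamma/(4-\gamma)}$ arising from Lemma~\ref{lem:commutator_decay}, which requires $\tfrac{2\gamma}{4-\gamma}>1$.
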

\begin{thm}[Breakdown of analyticity of the Hartree scattering operators]\label{thm:main2}
	Let $d\geq 2$ and $\frac{4}{3} < \gamma < 2$.
	Let $\mcal{T}\in\{\mcal{S},\mcal{W}\}$.
	\begin{enumerate}
		\item Let $s > \frac{5 + 5\gamma}{3+\gamma}$.
		Then $\mcal{T}:\Sigma\to L^2$ admits no extension to a map $L^2\to L^2$ which is H\"older continuous of order $s$ on any ball containing $0\in L^2$.
		\item Let $s > \frac{4+4\gamma}{2+\gamma}$.
		Then there exists a radius $R$ such that for any ball $B \subset B_R(0) \subset \Sigma$ (not necessarily containing the origin), $\mcal{T}:B\to L^2$ admits no extension to a map $L^2\to L^2$ which is H\"older continuous of order $s$ at any point in $B\cap L^2$.
	\end{enumerate}
\end{thm}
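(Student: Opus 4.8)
The plan is to reduce both statements to a sharp two-sided control of the leading nonlinear term of $\mcal{T}$, fed into the rigidity coming from the weighted-space scattering theory (Theorem~\ref{thm:main1} for $\gamma<1$; the standard $\Sigma$- and $\mcal{F}H^1$-theory for $1\le\gamma<2$, which likewise yields a convergent expansion of $\mcal{T}$ about every point of $\Sigma$). For $\mcal{T}\in\{\mcal{S},\mcal{W}\}$ this expansion about $0$ reads
\[
	\mcal{T}(f)=f\pm N_3(f)+\sum_{\substack{k\ge 5\\ k\text{ odd}}}N_k(f),\qquad
	N_3(f)=i\int_0^{\pm\infty}e^{-is\Delta}F\!\left(e^{is\Delta}f\right)ds,
\]
where $N_k$ is the degree-$k$ homogeneous part (only odd degrees occur since $F$ is trilinear in $(u,u,\bar u)$), and the signs and the sign of $\infty$ depend on which of $\mcal{S},\mcal{W}$ and which time direction is meant. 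Two facts about $N_3$ are decisive. First, \emph{nondegeneracy}: pairing with $f$, using unitarity of $e^{-is\Delta}$ and \eqref{eqn:HNLS_potential_energy}, gives $\langle N_3(f),f\rangle=\pm 4i\int_0^{\pm\infty}Q(e^{is\Delta}f)\,ds$; since $\widehat{|x|^{-\gamma}}$ is a positive multiple of $|\xi|^{-(d-\gamma)}$ (valid because $\gamma<2\le d$) one has $Q(g)>0$ for $g\not\equiv 0$, while $\|e^{is\Delta}f\|_{L^\infty}\lesssim(1+|s|)^{-d/2}$ forces $Q(e^{is\Delta}f)\lesssim(1+|s|)^{-\gamma}$ with $\gamma>1$, so the $s$-integral converges and $N_3(f)\neq 0$ for every nonzero Schwartz $f$. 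Second, \emph{scaling}: with $\mcal{D}_\lambda\phi(x)=\lambda^{d/2}\phi(\lambda x)$, the intertwining $e^{is\Delta}\mcal{D}_\lambda=\mcal{D}_\lambda e^{i\lambda^2 s\Delta}$, the $(-\gamma)$-homogeneity of the kernel, and the substitution $s\mapsto\lambda^{-2}s$ give $N_3(\mcal{D}_\lambda\phi)=\lambda^{\gamma-2}\mcal{D}_\lambda N_3(\phi)$, whence $\|N_3(\mcal{D}_\lambda\phi)\|_{L^2}=\lambda^{\gamma-2}\|N_3(\phi)\|_{L^2}$ because $\mcal{D}_\lambda$ is an $L^2$-isometry.

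For part (1), fix a nonzero Schwartz $\phi$ and set $g_\ve=\ve\,\mcal{D}_{\lambda}\phi$ with $\lambda=\ve^{\theta}$, $\theta\in(0,1)$ to be chosen, and let $\ve\to 0$. Then $\|g_\ve\|_{L^2}=c\ve$, $\|g_\ve\|_{\Sigma}\simeq\ve^{1-\theta}$ (the $\mcal{F}H^1$-weight dominating as $\lambda\to 0$), and $\|N_3(g_\ve)\|_{L^2}=c'\ve^{\,3-\theta(2-\gamma)}$. Since $\mcal{T}$ is $\Sigma$-analytic near $0$ with $D^2\mcal{T}(0)=D^4\mcal{T}(0)=0$, the tail past the cubic term obeys $\|\mcal{T}(g_\ve)-g_\ve\mp N_3(g_\ve)\|_{\Sigma}\lesssim\|g_\ve\|_{\Sigma}^5\simeq\ve^{5-5\theta}$ for small $\ve$. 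Suppose $\mcal{T}$ admitted an $L^2\to L^2$ extension of class $C^s$ on a ball about $0$ (by monotonicity of the scale $C^s$ we may assume $s<3$). Comparing Taylor expansions along rays $tw$, $w\in\Sigma$, and using that $\Sigma$ is dense in $L^2$ together with the vanishing of the quadratic term of $\mcal{T}$ at $0$, the degree-$\le 2$ polynomial part of the extension at $0$ is forced to be $w\mapsto w$; hence one would need $\|\mcal{T}(g_\ve)-g_\ve\|_{L^2}\lesssim\|g_\ve\|_{L^2}^s\simeq\ve^s$. But the two remainder bounds give $\|\mcal{T}(g_\ve)-g_\ve\|_{L^2}\ge\|N_3(g_\ve)\|_{L^2}-\|\mcal{T}(g_\ve)-g_\ve\mp N_3(g_\ve)\|_{L^2}\gtrsim\ve^{\,3-\theta(2-\gamma)}$ provided $3-\theta(2-\gamma)<5-5\theta$, i.e. $\theta<\tfrac{2}{3+\gamma}$ (which is $<1$), and $\ve^{\,3-\theta(2-\gamma)}\gg\ve^s$ provided $\theta>\tfrac{3-s}{2-\gamma}$. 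The admissible interval $\big(\tfrac{3-s}{2-\gamma},\tfrac{2}{3+\gamma}\big)$ is nonempty exactly when $s>\tfrac{5+5\gamma}{3+\gamma}$, which produces the contradiction.

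Part (2) follows the same template at an arbitrary base point $u_0$ in a ball $B\subset B_R(0)\subset\Sigma$ with $R$ taken small, testing with the same $g_\ve$ (so $u_0+g_\ve\in B$ once $\ve$ is small, as $\|g_\ve\|_\Sigma\to 0$). The new features are that the forced Taylor polynomial $P_{u_0}$ at $u_0$ now contains a genuine quadratic term $\tfrac12 D^2\mcal{T}(u_0)(v,v)$, and that $D^4\mcal{T}(u_0)\neq 0$ for $u_0\neq 0$, so the first term omitted after the cubic is quartic rather than quintic: $\|\mcal{T}(u_0+g_\ve)-P_{u_0}(g_\ve)-\tfrac16 D^3\mcal{T}(u_0)(g_\ve,g_\ve,g_\ve)\|_\Sigma\lesssim_R\|g_\ve\|_\Sigma^4\simeq\ve^{4-4\theta}$. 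Granting, further, that $\tfrac16 D^3\mcal{T}(u_0)(g_\ve,g_\ve,g_\ve)$ is governed by $N_3(g_\ve)$ — i.e. its $u_0$-dependent corrections are $o(\ve^{\,3-\theta(2-\gamma)})$ as $\lambda=\ve^\theta\to 0$ for $u_0$ fixed — the arithmetic of part (1) repeats with the single change that $\theta<\tfrac{2}{3+\gamma}$ is replaced by $4-4\theta>3-\theta(2-\gamma)$, i.e. $\theta<\tfrac{1}{2+\gamma}$. Together with $\theta>\tfrac{3-s}{2-\gamma}$ this is solvable precisely when $s>\tfrac{4+4\gamma}{2+\gamma}$.

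The routine inputs are the $L^2$-convergence of the integral defining $N_3$ on Schwartz data (which, together with the deeper estimates behind the $\Sigma$-theory, is where $\gamma>\tfrac43$ is used), the scaling identity, and the Taylor-matching step; the nondegeneracy $N_3\not\equiv 0$ was carried out above. I expect the main obstacle to be the last claim invoked in part (2): showing that the $u_0$-dependent pieces of $D^3\mcal{T}(u_0)(g_\ve,g_\ve,g_\ve)$ — which are components of the iterated-Duhamel operators $D^{\,3+2m}\mcal{T}(0)$ evaluated on three wide, low-frequency bumps $\mcal{D}_\lambda\phi$ together with copies of the fixed $u_0$ — decay faster than $\lambda^{\gamma-2}$ as $\lambda\to 0$. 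This is invisible to the crude bound $\|D^k\mcal{T}(0)(h_1,\dots,h_k)\|_\Sigma\lesssim\prod_i\|h_i\|_\Sigma$, which on such data is far too lossy; one must instead estimate the Duhamel integrals directly and exploit the scale separation between the spread-out factors $\mcal{D}_\lambda\phi$ and the unit-scale factor $u_0$ (an alternative would be to decouple $u_0$ from the perturbation via a Galilean boost, replacing this estimate by a dispersive collision analysis).
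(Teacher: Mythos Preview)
Your treatment of part (1) is correct and coincides with the paper's argument up to notation: your $\lambda=\ve^\theta$ is the paper's $\sigma^{-1}$ with $\ve=\sigma^{-j}$, so $\theta=1/j$, and the two parameter windows match exactly.

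For part (2) your skeleton is also the paper's, and you have correctly located the genuine difficulty: controlling the $u_0$-dependent pieces of the third-order term. However, the paper's resolution differs from both of your suggested routes (scale separation, Galilean boost), and your expected conclusion --- that these pieces are $o(\ve^{3-\theta(2-\gamma)})$ as $\lambda\to 0$ for $u_0$ fixed --- is stronger than what the paper actually proves and is likely false. The paper instead works directly with the hierarchy around $u_0$ and decomposes $w_3^+ = \mcal{N}(w_1,w_1,w_1) + S\mcal{N}(u,w_1,w_2) + S\mcal{N}(u,u,w_3)$. Two separate issues then arise that your sketch elides. First, the ``resonant'' piece $\mcal{N}(w_1,w_1,w_1)$ is \emph{not} $N_3(g_\ve)$, since $w_1$ solves the linearized equation around $u$ rather than the free equation; the paper handles this by a time-rescaling argument showing that after the parabolic rescaling $w_1$ behaves like a free evolution on almost all of $[0,\infty)$, with the short-time remainder controlled by $\|v\|_{H^1}^3$ (which is harmless since $\|\nabla v_{\ve,\sigma}\|_2\sim\ve\sigma^{-1}$). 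Second, for the nonresonant pieces the paper does \emph{not} obtain a better power of $\sigma$; it obtains the \emph{same} scaling $\ve^3\sigma^{2-\gamma}$ but with prefactor $\lesssim R^2$, and then absorbs them by taking $R$ small. The mechanism is an interpolated estimate (at the Lorentz endpoint, which is essential for sharpness)
\[
\|w_1\|_{L_t^{\alpha,\infty}L_x^r}\lesssim \|v\|_2^{1-\gamma/4}\,\|v\|_\Sigma^{(4-\gamma)/8}\,\|\nabla w_1\|_{L_t^\infty L_x^2}^{(3\gamma-4)/8},
\]
obtained by splitting the Gagliardo--Nirenberg/$J(t)$-decay between $\|J(t)w_1\|_2$ and $\|\nabla w_1\|_2$. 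Under the rescaling the weight factor grows like $\ve\sigma$ while the gradient factor \emph{decays} like $\ve\sigma^{-1}$ (after a bootstrap showing $\|\nabla w_1\|_{\infty,2}\lesssim\|\nabla v\|_2$), and the exponents are chosen so that the product lands exactly at $\ve\sigma^{1-\gamma/2}$; two such factors times $\|w_1\|_{L_t^{q,2}L_x^r}\lesssim\ve$ give $R^2\ve^3\sigma^{2-\gamma}$. So the smallness of $R$ in the statement is not cosmetic: it is precisely what makes the nonresonant terms subordinate, and your proposed little-$o$ improvement is neither needed nor established.
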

Theorems \ref{thm:main1} and \ref{thm:main2} are direct analogues of Theorem \ref{thm:pNLS_main} for the Hartree equation.
Theorem \ref{thm:main1} states that the scattering problem for HNLS is analytically well-posed for initial/final data in $\Sigma$ and $\mcal{F}H^1$.
The analyticity is consistent with the fact that the Hartree nonlinearity $F(u) = (|x|^{-\gamma}*|u|^2)u$ depends analytically on the solution $u$.
We isolate Theorem \ref{thm:main1} as a separate result from Theorem \ref{thm:main2} because we consider it to be one of independent interest for the scattering theory of HNLS.
To our knowledge, Theorem \ref{thm:main2} is the first analyticity result for the Hartree scattering operators in the mass-subcritical case.

Theorem \ref{thm:main2} states that despite Theorem \ref{thm:main1}, which says that the scattering problem in $\Sigma$ for HNLS is as well-posed as it can possibly be, the analogous problem in $L^2$ exhibits at best a finite amount of regularity with respect to the initial/final data.
We note that for $\frac{4}{3}<\gamma<2$, $\frac{5+5\gamma}{3+\gamma} < \frac{4+4\gamma}{2+\gamma}$.
Thus the breakdown of regularity we obtain is more severe at the origin than elsewhere.
In particular, we see that $\mcal{T}:\Sigma\to L^2$ has no $C^3$ extension to a map $L^2\to L^2$.
The lower range of $s$ for which we obtain failure of H\"older continuity is $s > \frac{5+5\gamma}{3+\gamma} > \frac{35}{13} \approx 2.69$ for part 1, and $s > \frac{4+4\gamma}{2+\gamma} > \frac{14}{5} =  2.8$ for part 2.

Theorems \ref{thm:main1} and \ref{thm:main2} improve on Theorem \ref{thm:pNLS_main} in the following ways:
\begin{enumerate}
	\item They comprise an \ita{infinite} loss of regularity between the scattering problems in $\Sigma$ and in $L^2$, whereas in Theorem \ref{thm:pNLS_main} the loss of regularity is finite in magnitude.
	This suggests that the smoothness of the nonlinearity does not play a significant role in the $\Sigma-L^2$ disparity in the scattering theory for mass-subcritical nonlinear Schr\"odinger equations.
	\item The expansion of $\mcal{T}$ and the breakdown of regularity are proved at points $u_0\neq 0$ as well.
	In Theorem \ref{thm:pNLS_main} the analogous claims are only proved at the origin, due to technical difficulties in working with the fractional power $|u|^pu$.
\end{enumerate}
However, we note that the gauge invariance of the nonlinearity does play an important role both in this paper and in \cites{Lee21}.
We have yet to investigate whether similar results can be proven for nonlinearities which are not gauge invariant.

\subsection{Past work}

Here we briefly review the history of the scattering theory for the defocusing mass-subcritical Hartree equation.
Results in this area roughly fall into the following categories, along which we organize our review:
\begin{enumerate}
	\item global well-posedness, asymptotic completeness, and existence of the wave operators on $\Sigma$, or more generally a weighted space $\Sigma^{\ell,m} = H^\ell\cap\mcal{F}H^m$.
	\item existence of scattering states in $L^2$ for initial data in weighted spaces $\Sigma^{\ell,m}$.
	\item regularity of the scattering operators.
	\item nonexistence of scattering states in $L^2$.
\end{enumerate}

The foundational work in the scattering theory of NLS equations with Hartree-type nonlinearity is that of Ginibre-Velo \cites{GiVe80}, which establishes local and global well-posedness and scattering in the weighted spaces $\Sigma^{\ell, 1}$, $\ell\geq 1$, under the assumption $2<\gamma<\min(4,d)$ (mass-supercritical, energy-subcritical).
Strauss \cites{Strauss81, Strauss81.sequel} extended this result to small-data scattering in $H^1$ for $2 \leq \gamma < \min(4,d)$ (recovering the mass-critical case and dropping the weighted assumption), and existence of wave operators on the Lebesgue space $L^{4d/(2d+\gamma)}\cap L^2$ for $\frac{4}{3} < \gamma < \min(4,d)$, which brings us partially into the mass-subcritical regime.
This was then extended by Hayashi-Tsutsumi \cites{HaTs87} in $d\geq 2$ to scattering in $\Sigma^{\ell,m}$ for $\ell,m\geq 1$ assuming $\frac{4}{3} < \gamma < \min(4,d)$.
Hayashi-Ozawa \cites{HaOz88} improved this in $d\geq 3$ to existence of wave operators on $\mcal{F}H^k$, $k\geq 1$ for $\frac{4}{3} < \gamma < 2$.
In these works, $\gamma = \frac{4}{3}$ is a exponent which plays a similar role to the Strauss exponent $p = \alpha(d)$ for pNLS: they are both threshold exponents for which the Gagliardo-Nirenberg inequality can be leveraged to obtain certain global-in-time spacetime bounds of the nonlinear evolution.
Nawa-Ozawa \cites{NaOz92} further improved this to existence of wave operators on $\mcal{F}H^k$, $k\geq 1$ for $d\geq 2$ and $1<\gamma<2$, recovering the full mass-subcritical range.
Finally, Masaki \cites{Masaki19} showed global well-posedness and scattering in the critically-scaling homogeneous weighted $L^2$ space $\mcal{F}\dot{H}^{s_c}$, where $s_c = 1-\frac{\gamma}{2}$ with $1 < \gamma < 2$, $d\geq 2$, assuming global spacetime bounds for the solution in $\mcal{F}\dot{H}^{s_c}$.

$\gamma = 1$ is an important threshold for the scattering theory of mass-subcritical HNLS.
Glassey \cites{Glassey77} showed that there can be no scattering theory in $L^2\cap L^1$ for $\gamma = 1$ and $d=3$ in the sense that the only solutions which are asymptotically free in $L^2\cap L^1$ with Schwartz data are trivial.
Hayashi-Tsutsumi \cites{HaTs87} improved this result, showing that in the full range $0<\gamma\leq 1$ and $d\geq 2$, the only solutions with $\Sigma^{\ell,m}$ data which are asymptotically free in $L^2$ are trivial.
Thus the mass-subcritical regime splits into two sub-regimes.
The first is the \ita{long-range} regime $0<\gamma\leq 1$, for which no $L^2$ scattering theory can exist.
In this range the conjectured behavior is \ita{modified scattering}, i.e. asymptotic convergence to a free evolution modulated by a phase, for which there is an abundance of available literature.
The second is the \ita{short-range} regime $1 < \gamma < 2$, the only part of the mass-subcritical regime where nontrivial asymptotically free solutions in $L^2$ are possible.

Once the existence of the scattering operators has been established, the next natural question is to determine their regularity.
For pNLS and HNLS, earlier investigations into this question include the work of Kita \cites{Kita03}, Kita-Ozawa \cites{KiOz05}, Carles-Ozawa \cites{CaOz08}, and Masaki \cites{Masaki09}, as well as the author's own result in \cites{Lee21}.
Analyticity for the HNLS scattering operators was first proved by Miao-Wu-Zhang \cites{MiWuZh09} in the case $d=3$, with  $H^1$ as the scattering topology and with a mass-supercritical, energy-subcritical nonlinearity (the same as that of Ginibre-Velo \cites{GiVe80}).
Carles-Gallagher \cites{CaGa09} established a general abstract framework for proving the analyticity of scattering operators for semilinear dispersive PDEs with real analytic nonlinearities.
They applied this framework to prove analyticity of the HNLS scattering operators on $\Sigma$ and/or $H^1$ in a mass-supercritical, energy-subcritical regime with $d\geq 3$, as well as analyticity for the pNLS scattering operators on $\Sigma$ and/or $H^1$ under various conditions on the dimension and for certain even values of $p$.

It is possible to obtain a wider range of scattering results by relaxing the topology of asymptotic convergence.
Hayashi-Tsutsumi \cites{HaTs87} proved that for $d\geq 2$ and $1<\gamma<\min(4,d)$, for any initial data $u_0\in\Sigma$ the corresponding global solution $u(t)$ scatters to a unique scattering state $u_+\in L^2$: in other words, the initial-to-scattering-state operator $\mcal{S}$ is well-defined as a map $\Sigma\to L^2$.
This result is sharp in light of the impossibility of an $L^2$ scattering theory for $\gamma \leq 1$.
This was then improved by Hayashi-Ozawa \cites{HaOz88}, who showed that $\mcal{S}$ is well-defined as a map $\mcal{F}H^1\to L^2$ for $1 < \gamma < \min(2,d)$.
As for the problem of the wave operators, Holmer-Tzirakis \cites{HoTz10} showed that for $d=2$ and $1 < \gamma < 2$, for any $H^1$ scattering state $u_+$ there exists a global $H^1$ solution $u$ which scatters to $u_+$.
However, because this global solution $u$ is not known to be uniquely determined by $u_+$, this falls short of defining the wave operator.

\subsection{Outline of the paper}

In Section \ref{sec:notation} we introduce the notation and basic estimates used throughout the paper.

Section \ref{sec:analyticity} is devoted to the proof of Theorem \ref{thm:main1}.
The proof is largely along the lines of the abstract framework of Carles-Gallagher \cites{CaGa09}.
The relevant estimates for our case arise as a consequence of the mass-subcritical scattering theory developed in \cites{HaTs87,HaOz88,NaOz92}.

In Section \ref{sec:breakdown} we prove Theorem \ref{thm:main2}.
The proof of part (1) proceeds largely along the lines of the proof of the analogous statement for pNLS given in \cites{Lee21}.
We perform a Taylor expansion of the integral form of the scattering operators, which at the origin takes the form
\[
	\mcal{T}(\phi) = \phi \pm i\int_0^\infty e^{-is\Delta} F(e^{is\Delta})~ds + e(\phi),	
\]
where $e(\phi)$ is an error term.
The idea is to identify the source of the breakdown of regularity in the main term
\[
	i\int_0^\infty e^{-is\Delta} F(e^{is\Delta}\phi)~ds,
\]
and to show that the error term is sufficiently negligible that the bad behavior in the main term manifests.
The failure of the following ``nonlinear free energy'' estimate plays a key role in the argument:
\[
	\int_0^\infty Q(e^{it\Delta}\phi)~dt \lesssim \|\phi\|_2^\alpha, ~\alpha>0,
\]
where $Q$ is the potential energy functional for HNLS \eqref{eqn:HNLS_potential_energy}.
It is easily seen by scaling that such control only holds for the mass-critical HNLS ($\gamma = 2$).
We note that the failure of such an estimate was also responsible in \cites{Lee21} for the breakdown of regularity of the scattering operators in the pNLS case, with $Q$ replaced by the potential energy functional for pNLS \eqref{eqn:pNLS_potential_energy}.
We speculate that the nonlinear free energy may be a governing quantity in the scattering theory for semilinear dispersive equations in general.
This idea is already implicit in the work of Cazenave-Weissler \cites{CaWe92}, in which the $L_t^1$-norm of the potential energy was used to define a notion of ``rapid decay'' and a scattering criterion for pNLS.

The proof of part (2) of Theorem \ref{thm:main2} is similar in spirit.
The source of the breakdown of regularity is still essentially in the failure of the nonlinear free energy estimate, and this manifests as a failure to control the third derivative term in the Taylor expansion of $\mcal{T}$ in $L^2$.
However, a modification of this estimate is needed because the ``resonant'' part of the main term now looks like
\[
	i\int_0^\infty e^{-is\Delta} F(w_1(\phi))~ds	
\]
where $w_1(\phi) = e^{it\Delta}\phi + \mcal{N}(u,u,w_1)$ for some nonlinear operator $\mcal{N}$, which complicates the estimate.
We get around this by rescaling in time in such a way that $w_1$ does behave like a free evolution for almost all times, and the error incurred by neglecting the remaining times is negligible.
Additional difficulties arise because the third derivative term contains extra nonresonant terms, which are not present in the expansion around $u_0 = 0$.
The conclusion is also weaker on balls in $L^2$ not containing $0$ because the error estimate is weaker: the series expansion around $0$ contains no fourth derivative term, so the error term $e(\phi)$ is quintic in $\phi$, whereas the expansion around $u_0\neq 0$ does contain fourth derivative terms and therefore the error is quartic.


\section{Notation and preliminary estimates}\label{sec:notation}

Let $X$ and $Y$ be two quantities.
We write $X\lesssim Y$ if there exists a constant $C>0$ such that $X\leq CY$.
If $C$ depends on parameters $a_1,\ldots,a_n$, i.e. $C = C(a_1,\ldots,a_n)$ and we wish to indicate this dependence, then we will write $X\lesssim_{a_1,\ldots,a_n} Y$.
If $X\lesssim Y$ and $Y\lesssim X$, we write $X\sim Y$.
If the constant $C$ is small, then we write $X \ll Y$.
We also employ the asymptotic notation $\mcal{O}(f)$ with its standard meaning.

We adopt the Japanese bracket notation $\langle x\rangle = (1+|x|^2)^\frac{1}{2}$.

We will be working with the mixed spacetime Lebesgue spaces $L_t^qL_x^r(I\times\bb{R}^d)$, with norms
\[
    \|u\|_{L_t^qL_x^r(I\times\bb{R}^d)} = \left( \int_I \left( \int_{\bb{R}^d} |u(t,x)|^r~dx\right)^{\frac{q}{r}}~dt\right)^{\frac{1}{q}}.    
\]
We will abbreviate the norm as $\|u\|_{L_t^qL_x^r(I\times\bb{R}^d)} = \|u\|_{L_t^qL_x^r(I)}$.
When $I$ is clear from context we will further abbreviate the norm as $\|u\|_{q,r}$.
For purely spatial integration, we write $\|f\|_{L^r(\bb{R}^d)} = \|f\|_r$.
For $1\leq r \leq \infty$, we denote by $r'$ the H\"older conjugate: $1 = \frac{1}{r} + \frac{1}{r'}$.
We will also occasionally use the mixed Lorentz-Lebesgue spaces $L_t^{q,p}L_x^r(I\times\bb{R}^d)$, where for $1\leq p<\infty$, $L_t^{q,p}(I)$ denotes the Lorentz space defined by the quasinorm
\[
	\|f\|_{L_t^{q,p}(I)} = q^{\frac{1}{p}}\left( \int_0^\infty t^p |\{s\in I:|f(s)| \geq t\}|^{\frac{p}{q}}~\frac{dt}{t} \right)^{\frac{1}{q}},
\]
and $L^{q,\infty}$ denotes weak $L^q$.

We recall the following fundamental estimates for the Schr\"odinger equation.
\begin{prop}[Dispersive estimate]\label{prop:dispersive_estimate}
    Let $2\leq r \leq\infty$.
    Then for all $t\neq 0$,
    \[
        \|e^{it\Delta}\phi\|_{L_x^r(\bb{R}^d)} \lesssim_{r,d} |t|^{\frac{d}{2} - \frac{d}{r}}\|\phi\|_{L^{r'}(\bb{R}^d)}.
    \]
\end{prop}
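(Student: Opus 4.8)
The plan is to prove this classical dispersive estimate by the standard route: establish the two endpoint bounds $L^1 \to L^\infty$ and $L^2 \to L^2$ for the propagator $e^{it\Delta}$, and then interpolate. The first step is to record the explicit integral kernel of the free Schr\"odinger flow. On the Fourier side one has $\widehat{e^{it\Delta}\phi}(\xi) = e^{-it|\xi|^2}\widehat{\phi}(\xi)$, and computing the inverse Fourier transform of the oscillatory Gaussian factor $e^{-it|\xi|^2}$ gives, for $t\neq 0$,
\[
    e^{it\Delta}\phi(x) = \frac{1}{(4\pi i t)^{d/2}}\int_{\bb{R}^d} e^{i|x-y|^2/(4t)}\phi(y)\,dy,
\]
with a continuous choice of branch for $(4\pi i t)^{d/2}$. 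I would first justify this identity for Schwartz $\phi$ by inserting the regularization $e^{-(\epsilon+it)|\xi|^2}$, completing the square to evaluate the resulting genuine Gaussian integral, and letting $\epsilon\downarrow 0$; the general case then follows by density.

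From this representation the two endpoint bounds are immediate. Taking absolute values inside the integral gives the pointwise bound $|e^{it\Delta}\phi(x)| \leq (4\pi|t|)^{-d/2}\|\phi\|_{L^1}$, hence the dispersive endpoint $\|e^{it\Delta}\phi\|_{L^\infty} \lesssim_d |t|^{-d/2}\|\phi\|_{L^1}$. At the other end, Plancherel's theorem together with $|e^{-it|\xi|^2}|\equiv 1$ yields mass conservation $\|e^{it\Delta}\phi\|_{L^2} = \|\phi\|_{L^2}$, i.e.\ the $L^2\to L^2$ bound with constant $1$ and no loss in $t$.

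Finally I would interpolate. For fixed $t\neq 0$, the Riesz--Thorin interpolation theorem applied to the linear map $e^{it\Delta}$ between the bounds $L^1\to L^\infty$ and $L^2\to L^2$, with parameter $\theta = 2/r\in[0,1]$ (so that $\tfrac{1}{r'} = (1-\theta)+\tfrac{\theta}{2}$ and $\tfrac{1}{r} = \tfrac{\theta}{2}$ are consistent), produces $e^{it\Delta}:L^{r'}(\bb{R}^d)\to L^r(\bb{R}^d)$ with operator norm at most a constant depending on $r,d$ times $|t|^{-\frac{d}{2}(1-\theta)} = |t|^{-d(\frac12-\frac1r)}$, valid for all $2\leq r\leq\infty$; this is the claimed estimate. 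I do not anticipate any genuine obstacle in this argument; the only step requiring a little care is the rigorous derivation of the kernel formula through the regularized Gaussian integral, together with the branch-of-square-root bookkeeping for $(4\pi i t)^{d/2}$, and even this is entirely routine.
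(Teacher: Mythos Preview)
Your argument is the standard and correct proof of the dispersive estimate: explicit kernel via Fourier inversion of the oscillatory Gaussian, the $L^1\to L^\infty$ and $L^2\to L^2$ endpoints, and Riesz--Thorin interpolation. The paper, however, does not prove this proposition at all; it is simply recalled as a well-known fundamental estimate for the Schr\"odinger propagator, so there is no ``paper's own proof'' to compare against. Your write-up is a faithful rendition of the classical argument and would serve perfectly well as a supplied proof.
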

\begin{dfn}[Admissible pair]\label{dfn:admissible}
    Let $d\geq 1$ and $2 \leq q,r \leq \infty$.
    We say that $(q,r)$ is an \ita{admissible pair} if it satisfies the scaling relation $\frac{2}{q} + \frac{d}{r} = \frac{d}{2}$ and $(d,q,r)\neq(2,2,\infty)$.
    We say that $(\alpha,\beta)$ is a \ita{dual admissible pair} if $(\alpha',\beta')$ is an admissible pair.
\end{dfn}
\begin{prop}[Strichartz estimates]\label{prop:Strichartz}
    Let $d\geq 1$, let $(q,r)$ be an admissible pair, and let $(\alpha,\beta)$ be a dual admissible pair.
    Then for any interval $I\subset\bb{R}$,
    \[
        \|e^{it\Delta}\phi\|_{L_t^qL_x^r(I\times\bb{R}^d)} \lesssim \|\phi\|_{L^2(\bb{R}^d)},
    \]
    \[
        \left\| \int_\bb{R} e^{-is\Delta} F(s)~ds \right\|_{L^2(\bb{R}^d)} \lesssim \|F\|_{L_t^\alpha L_x^\beta(\bb{R}\times\bb{R}^d)},
    \]
    \[
        \left\| \int_{s<t} e^{i(t-s)\Delta} F(s)~ds \right\|_{L_t^qL_x^r(\bb{R}\times\bb{R}^d)} \lesssim \|F\|_{L_t^\alpha L_x^\beta(\bb{R}\times\bb{R}^d)}.    
    \]
	Also, the following Lorentz space versions of these estimates hold for $2<q<\infty$:
    \[
        \|e^{it\Delta}\phi\|_{L_t^{q,2}L_x^r(I\times\bb{R}^d)} \lesssim \|\phi\|_{L^2(\bb{R}^d)},
    \]
    \[
        \left\| \int_{s<t} e^{i(t-s)\Delta} F(s)~ds \right\|_{L_t^{q,2}L_x^r(\bb{R}\times\bb{R}^d)} \lesssim \|F\|_{L_t^{q',2} L_x^{r'}(\bb{R}\times\bb{R}^d)}.    
    \]
\end{prop}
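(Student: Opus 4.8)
The plan is to derive all the estimates in Proposition~\ref{prop:Strichartz} from the single dispersive bound of Proposition~\ref{prop:dispersive_estimate}, together with the unitarity $\|e^{it\Delta}\phi\|_2=\|\phi\|_2$, by the standard duality and $TT^*$ argument. Write $T\phi(t)=e^{it\Delta}\phi$, regarded as an operator from $L^2_x$ to functions on $\bb{R}\times\bb{R}^d$; its formal adjoint is $T^*F=\int_\bb{R}e^{-is\Delta}F(s)\,ds$, so that $TT^*F(t)=\int_\bb{R}e^{i(t-s)\Delta}F(s)\,ds$. The homogeneous estimate $\|T\phi\|_{L^q_tL^r_x}\lesssim\|\phi\|_2$, the dual estimate $\|T^*F\|_2\lesssim\|F\|_{L^{q'}_tL^{r'}_x}$ (which is the second display with $(\alpha,\beta)=(q',r')$), and the ``diagonal'' bound $\|TT^*F\|_{L^q_tL^r_x}\lesssim\|F\|_{L^{q'}_tL^{r'}_x}$ are mutually equivalent by $TT^*$ and duality, so it suffices to establish the diagonal bound for each admissible $(q,r)$. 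Once the homogeneous and dual estimates are known for all (dual) admissible pairs, the full untruncated inhomogeneous estimate $\|TT^*F\|_{L^q_tL^r_x}\lesssim\|F\|_{L^\alpha_tL^\beta_x}$ follows by composing the $T^*$ bound into the $T$ bound through $L^2_x$, and the retarded (time-ordered) estimate will be extracted from this afterwards.

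\textbf{Non-endpoint range $q>2$.} Set $\sigma=d(\tfrac12-\tfrac1r)$; the scaling relation $\tfrac2q+\tfrac dr=\tfrac d2$ gives $\sigma=\tfrac2q\in(0,1)$. By Proposition~\ref{prop:dispersive_estimate} and Minkowski's inequality,
\[
    \|TT^*F(t)\|_{L^r_x}\ \lesssim\ \int_\bb{R}|t-s|^{-\sigma}\,\|F(s)\|_{L^{r'}_x}\,ds,
\]
which is one-dimensional fractional integration in time; since $\tfrac1{q'}+\sigma=1+\tfrac1q$, the Hardy--Littlewood--Sobolev inequality maps $L^{q'}_t$ into $L^q_t$, yielding the diagonal bound and hence, by the reductions above, the homogeneous and untruncated inhomogeneous estimates. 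For the retarded estimate one inserts the restriction $s<t$ and applies the Christ--Kiselev lemma to pass from $\int_\bb{R}$ to $\int_{s<t}$; this is legitimate whenever $\alpha<q$, i.e.\ for all admissible $(q,r)$ and dual admissible $(\alpha,\beta)$ except the double endpoint $\alpha=q=2$. (When $d=1$ every admissible pair already has $q\geq4$, so this case is complete.)

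\textbf{The endpoint $q=2$ is the main obstacle.} This occurs only for $d\geq3$, with $r=\tfrac{2d}{d-2}$, where $\sigma=1$ and the bare fractional-integration argument fails. Here one invokes the bilinear argument of Keel--Tao: decompose the time separation dyadically, $|t-s|\sim2^j$, and for each $j$ bound the bilinear form $\iint_{|t-s|\sim2^j}\langle e^{i(t-s)\Delta}F(s),G(t)\rangle\,ds\,dt$ by interpolating between the trivial $L^2_x\times L^2_x$ bound from unitarity and the $L^1_x\times L^\infty_x$ bound of Proposition~\ref{prop:dispersive_estimate}. This produces a small off-diagonal gain $2^{-\varepsilon j}$ at a slightly non-admissible pair of spatial exponents, and the dyadic pieces are then summed via a bilinear real-interpolation (Schur-test) argument. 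The same scheme also delivers the retarded estimate at the double endpoint, for which Christ--Kiselev is unavailable. The pair $(d,q,r)=(2,2,\infty)$ is exactly the case where the endpoint genuinely fails, which is why Definition~\ref{dfn:admissible} excludes it.

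\textbf{Lorentz refinements ($2<q<\infty$).} These follow by rerunning the fractional-integration step in Lorentz spaces: since $|t|^{-\sigma}\in L^{1/\sigma,\infty}_t$, O'Neil's convolution inequality gives $\big\||t|^{-\sigma}*g\big\|_{L^{q,2}_t}\lesssim\|g\|_{L^{q',2}_t}$, which crucially retains the second Lorentz exponent $2$. Composing with Minkowski and the dispersive estimate in $x$, and then dualizing through the pairing of $L^{q,2}_tL^r_x$ with $L^{q',2}_tL^{r'}_x$ exactly as above, yields $\|e^{it\Delta}\phi\|_{L^{q,2}_tL^r_x}\lesssim\|\phi\|_2$, and then the retarded Lorentz estimate by Christ--Kiselev (admissible since $q'<q$). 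Alternatively, both follow by real interpolation of the scalar Strichartz estimates between two nearby admissible exponents. We expect the endpoint and the Christ--Kiselev bookkeeping to be the only genuinely delicate points of the argument; everything else is a routine combination of Hardy--Littlewood--Sobolev/O'Neil and duality.
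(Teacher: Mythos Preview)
Your sketch is correct and is the standard route to these estimates: the $TT^*$ reduction plus Hardy--Littlewood--Sobolev in the non-endpoint range, Keel--Tao at the endpoint, Christ--Kiselev for the retarded estimate when $\alpha<q$, and O'Neil (or real interpolation between nearby admissible pairs) for the Lorentz refinement. There is nothing to compare against, however, because the paper does not prove this proposition at all; it is simply recalled in Section~\ref{sec:notation} as a known preliminary, alongside the dispersive estimate and the vector-field identities, and used as a black box throughout. So your write-up is not an alternative to the paper's argument but rather supplies the proof the paper omits by citation.
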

We will make use of the vector field $J(t) = x + 2it\nabla$, which is standard in the scattering theory of Schr\"odinger equations.
$J$ obeys the identity
\begin{equation}\label{eqn:J(t)_identity}
	J(t) = M(t)(2it\nabla)M(-t) = e^{it\Delta}xe^{-it\Delta}
\end{equation}
where $M(t) = e^{i|x|^2/4t}$; it measures the evolution of the center of mass for free evolutions.
It is associated to the following decay estimate:
\begin{lem}[\cites{NaOz92}]\label{lem:commutator_decay}
	For $2 \leq r < \frac{2d}{d-2}$ and $t\neq 0$, we have
	\[
		\|u(t)\|_r \lesssim_{d,r} |t|^{-\theta(d,r)}\|u(t)\|_2^{1-\theta(d,r)}\|J(t)u(t)\|_2^{\theta(d,r)},
	\]
	where $\theta(d,r) = \frac{d(r-2)}{2r}$.
\end{lem}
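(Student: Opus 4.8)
The plan is to reduce the estimate to the standard Gagliardo--Nirenberg interpolation inequality by conjugating with the unimodular multiplier $M(t) = e^{i|x|^2/4t}$, which is exactly what the identity \eqref{eqn:J(t)_identity} is designed for. First I would record the differential identity behind \eqref{eqn:J(t)_identity}: a direct computation gives
\[
	2it\,\nabla\!\bigl(M(-t)f\bigr) = M(-t)\,(x+2it\nabla)f = M(-t)\,J(t)f,
\]
so, taking $f = u(t)$,
\[
	\nabla\!\bigl(M(-t)u(t)\bigr) = \frac{1}{2it}\,M(-t)J(t)u(t).
\]
In particular, when $u(t)\in L^2$ and $J(t)u(t)\in L^2$ (the natural hypothesis for the right-hand side to be finite), the function $v := M(-t)u(t)$ lies in $H^1(\bb{R}^d)$. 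Since $|M(\pm t)| = 1$ pointwise, multiplication by $M(\pm t)$ is an isometry on every $L^p(\bb{R}^d)$, so $\|v\|_r = \|u(t)\|_r$, $\|v\|_2 = \|u(t)\|_2$, and from the displayed identity $\|\nabla v\|_2 = \frac{1}{2|t|}\|J(t)u(t)\|_2$.

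Next I would apply the Gagliardo--Nirenberg inequality in the form
\[
	\|v\|_r \lesssim_{d,r} \|v\|_2^{1-\theta}\,\|\nabla v\|_2^{\theta}
\]
to this $v$. The exponent is forced by scaling: balancing $\frac{1}{r} = (1-\theta)\frac{1}{2} + \theta\bigl(\frac{1}{2}-\frac{1}{d}\bigr)$ gives $\theta = \frac{d(r-2)}{2r} = \theta(d,r)$, and the admissibility constraint $0 \le \theta \le 1$ is precisely $2 \le r < \frac{2d}{d-2}$ (with the right endpoint read as $+\infty$ when $d = 2$), which is exactly the hypothesized range. Substituting the three norm identities from the first step yields
\[
	\|u(t)\|_r \lesssim_{d,r} \|u(t)\|_2^{1-\theta}\left(\frac{1}{2|t|}\|J(t)u(t)\|_2\right)^{\theta} \lesssim_{d,r} |t|^{-\theta(d,r)}\,\|u(t)\|_2^{1-\theta(d,r)}\,\|J(t)u(t)\|_2^{\theta(d,r)},
\]
which is the claimed estimate.

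Honestly, there is no genuine obstacle here: the argument is a short computation, and the content is entirely in the algebraic identity \eqref{eqn:J(t)_identity}, which trades the weighted quantity $\|J(t)u(t)\|_2$ for the flat Sobolev quantity $\|\nabla(M(-t)u(t))\|_2$ at the cost of one explicit power of $|t|$. The only points requiring a little care are verifying that the Gagliardo--Nirenberg exponent matches $\theta(d,r)$ exactly, and checking that the stated range of $r$ coincides with the admissibility range of that inequality — so for $d \ge 3$ the endpoint $r = \frac{2d}{d-2}$ (where $\theta = 1$, i.e. the pure Sobolev embedding) must be excluded, which is consistent with the strict inequality in the hypothesis.
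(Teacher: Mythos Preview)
Your proof is correct and follows exactly the same route as the paper: conjugate by the unimodular factor $M(-t)$, apply Gagliardo--Nirenberg to $M(-t)u(t)$, and use the identity $J(t) = M(t)(2it\nabla)M(-t)$ to convert $\|\nabla(M(-t)u(t))\|_2$ into $(2|t|)^{-1}\|J(t)u(t)\|_2$. The paper's version is just terser; your additional remarks on the exponent computation and the admissible range of $r$ are accurate and add nothing beyond what the paper leaves implicit.
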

\begin{proof}
	By the decomposition $J(t) = M(t)(2it\nabla)M(-t)$ \eqref{eqn:J(t)_identity} and the Gagliardo-Nirenberg inequality,
	\begin{align*}
		\|u(t)\|_r
			&= \|M(-t)u(t)\|_r
			\lesssim \|u(t)\|_2^{1-\theta(d,r)}\|\nabla M(-t)u(t)\|_2^{\theta(d,r)}\\
			&= |t|^{-\theta(d,r)}\|u(t)\|_2^{1-\theta(d,r)}\|J(t)u(t)\|_2^{\theta(d,r)}. \qedhere	
	\end{align*}
\end{proof}
We note that for the special case $r = \frac{4d}{2d-\gamma}$, $\theta = \frac{\gamma}{4}$; from this point on we fix this as the value of $\theta$.

Lastly, we will need some preliminary estimates on the nonlinearity.
Define
\begin{equation}\label{eqn:trilinear_form}
	T(u,v,w) = (|x|^{-\gamma}*(u\ol{v}))w.	
\end{equation}
Applications of H\"older's inequality and the Hardy-Littlewood-Sobolev inequality yield the following multilinear estimates:
\begin{lem}[Nonlinear estimates \cites{HaTs87}]\label{lem:multilinear_estimate}
	Let $0 < \gamma < d$ and $r = \frac{4d}{2d-\gamma}$.
	Then for all $u,v,w\in L^r(\bb{R}^d)$,
	\[
		Q(u)
		= \int_{\bb{R}^d} (|x|^{-\gamma}*|u|^2)|u(x)|^2dx
		\lesssim \|u\|_r^4,
	\]
	\[
		\|T(u,v,w)\|_{r'} 
			\lesssim \|u\|_r\|v\|_r\|w\|_r,
	\]
	\[
		\|\nabla T(u_1,u_2,u_3)\|_{r'} 
			\lesssim \sum_{i=1}^3 \|\nabla u_i\|_r \prod_{j\neq i} \|u_j\|_r,
	\]
	\[
		\|J(t) T(u_1,u_2,u_3)\|_{r'} 
			\lesssim \sum_{i=1}^3 \|J(t) u_i\|_r \prod_{j\neq i} \|u_j\|_r.
	\]
\end{lem}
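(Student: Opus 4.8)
The plan is to derive all four estimates from the Hardy--Littlewood--Sobolev (HLS) inequality and H\"older's inequality, the bound on $J(t)T$ requiring in addition the \emph{gauge invariance} of the trilinear form $T$ from \eqref{eqn:trilinear_form}. First I would record the exponent bookkeeping. Since $r = \tfrac{4d}{2d-\gamma}$ we have $r' = \tfrac{4d}{2d+\gamma}$ and the key relation $\tfrac1{r'} = \tfrac1r + \tfrac{\gamma}{2d}$. Recall the operator form of HLS: for $0<\gamma<d$, $\||x|^{-\gamma}*f\|_s \lesssim \|f\|_p$ whenever $1<p<s<\infty$ and $\tfrac1s = \tfrac1p - \tfrac{d-\gamma}{d}$. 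One checks that $p = \tfrac{2d}{2d-\gamma}$ and $s = \tfrac{2d}{\gamma}$ meet these constraints precisely when $0<\gamma<d$, and that moreover $2p = r$ and $\tfrac1{r'} = \tfrac1s + \tfrac1r$.

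Granting this, the estimate on $T$ is a three-line computation: H\"older gives $\|T(u,v,w)\|_{r'} \le \||x|^{-\gamma}*(u\overline{v})\|_s\|w\|_r$; HLS bounds the first factor by $\|u\overline{v}\|_p$; and one more H\"older step gives $\|u\overline{v}\|_p \le \|u\|_{2p}\|v\|_{2p} = \|u\|_r\|v\|_r$. The bound on $Q$ then follows either by applying the bilinear (symmetric) form of HLS directly to $Q(u) = \iint |x-y|^{-\gamma}|u(x)|^2|u(y)|^2\,dx\,dy$, giving $Q(u) \lesssim \||u|^2\|_p^2 = \|u\|_r^4$, or by writing $Q(u) = \int T(u,u,u)\overline{u}\,dx$ and combining the $T$-estimate with one more H\"older inequality. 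For the gradient estimate I would use that $\nabla$ commutes with convolution, so that the product rule expresses $\nabla T(u_1,u_2,u_3)$ as the sum of the three terms $(|x|^{-\gamma}*((\nabla u_1)\overline{u_2}))u_3$, $(|x|^{-\gamma}*(u_1\overline{\nabla u_2}))u_3$, and $(|x|^{-\gamma}*(u_1\overline{u_2}))\nabla u_3$; each is bounded by the $T$-estimate with one entry replaced by its gradient, and summing gives the claim.

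The estimate on $J(t)T$ is the one place that needs a genuine idea, and I expect it to be the main obstacle. The naive route --- writing $J(t) = x + 2it\nabla$ and moving the factor $x$ through the convolution via $x(f*g) = (xf)*g + f*(xg)$ --- breaks down, because it produces the convolution kernel $x|x|^{-\gamma}$, which is of size $|x|^{1-\gamma}$ and hence, once $\gamma \le 1$, grows at spatial infinity and is not covered by HLS. Instead I would use the identity $J(t) = M(t)(2it\nabla)M(-t)$ from \eqref{eqn:J(t)_identity} together with the fact that, since $|M(\mp t)| = 1$ and the conjugate on the second slot cancels one phase factor, $M(t)\,T(M(-t)a_1, M(-t)a_2, M(-t)a_3) = T(a_1,a_2,a_3)$ for \emph{all} $a_1,a_2,a_3$. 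Writing $T(u_1,u_2,u_3) = M(t)\,T(M(-t)u_1, M(-t)u_2, M(-t)u_3)$, applying $J(t)$, distributing $2it\nabla$ over $T$ by the product rule as in the gradient estimate, and using $2it\nabla(M(-t)u_i) = M(-t)J(t)u_i$, one finds that $J(t)T(u_1,u_2,u_3)$ is a sum of three terms of the form $(|x|^{-\gamma}*(v_1\overline{v_2}))v_3$ in which exactly one $v_i$ equals $\pm J(t)u_i$ and the other two equal $u_j$ --- the harmless sign arising from the conjugate in the middle slot. Each such term is controlled by the bilinear HLS argument above with $\|J(t)u_i\|_r$ replacing $\|u_i\|_r$, and summing yields the stated bound. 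The only point to watch is tracking the complex conjugate through the gauge-invariance identity so as to confirm that all the phase factors $M(\pm t)$ cancel; everything else is routine.
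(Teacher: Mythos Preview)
Your proposal is correct and is exactly the argument the paper has in mind: the paper does not write out a proof at all, merely stating that the estimates follow from ``applications of H\"older's inequality and the Hardy--Littlewood--Sobolev inequality,'' and your write-up supplies precisely those details, including the standard gauge-invariance/$J(t)=M(t)(2it\nabla)M(-t)$ trick for the fourth estimate (which is implicit in the cited reference \cite{HaTs87}). Your exponent bookkeeping and the handling of the sign from the conjugated slot are both fine.
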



\section{Analyticity of the Hartree scattering operators}\label{sec:analyticity}

For the remainder of this paper, we assume $d\geq 2$ and $\frac{4}{3}<\gamma<2$.

Our goal in this section is to prove Theorem \ref{thm:main1}.
As we have mentioned in the introduction, this proceeds largely along the lines of the framework set out in \cites{CaGa09}, adapted to the estimates we have for the mass-subcritical Hartree equation.

We first address the analyticity of the wave operator.
Let $u_+\in\Sigma$ be a scattering state, and let $v\in\Sigma$ with $\|v\|_\Sigma = 1$ be arbitrary.
By \cites{HaTs87}, under our current assumptions there exists a unique global solution $u\in C_t\Sigma(\bb{R})$ to Equation \eqref{eqn:HNLS} which scatters to $u_+$, and for each $\ve>0$ there exists a unique global solution $u^\ve\in C_t\Sigma$ which scatters to $u_+ + \ve v$.
Moreover, the wave operator $\mcal{W}:\Sigma\to\Sigma$ is well-defined.

Write $u^\ve = u + w^\ve$.
Our goal is to show that for $\|u_+\|_\Sigma$ sufficiently small, $w^\ve$ admits the norm-convergent expansion
\[
	w^\ve(t) = \sum_{k=1}^\infty \ve^k w_k(t) ~\tnm{as}~\ve\to 0,
\]
where $(w_k)$ are elements of an appropriate function space determined by contraction mapping.
This argument consists of three parts:
\begin{enumerate}
	\item determining the hierarchy of equations satisfied by the sequence $(w_k)$;
	\item showing that $(w_k)$ is sufficiently strongly bounded in a global spacetime norm, so that the series for $u^\ve$ is norm convergent;
	\item showing that the series for $w^\ve$ does actually converge to $w^\ve$.
\end{enumerate}
It will emerge as a consequence that $\mcal{W}$ admits the norm-convergent expansion
\[
	\mcal{W}(u_++\ve v) = \sum_{j=0}^\infty \ve^kv_k~\tnm{as}~\ve\to 0
\]
where $(v_k)\subset\Sigma$.

\subsection{Hierarchy equations}

The coefficients $(w_k)$ of the series for $u^\ve$ formally satisfy a hierarchy of coupled PDEs.
We express $u^\ve$ in integral form, then match like powers of $\ve$ to obtain the coefficients.
Let us write
\[
	\mcal{N}(u,v,w)(t) = i\int_t^\infty e^{i(t-s)\Delta} T(u,v,w)~ds
\]
where $T$ is the trilinear form defined in \eqref{eqn:trilinear_form}.
Matching zero-th order terms in $\ve$ yields
\[
	w_0(t) = u(t) = e^{it\Delta}u_+ + \mcal{N}(u,u,u)(t)
\]
Matching first order terms in $\ve$ yields
\[
	w_1(t) = e^{it\Delta}v + \mcal{N}(u,u,w_1)(t) + \mcal{N}(u,w_1,u)(t) + \mcal{N}(w_1,u,u)(t).
\]
Higher-order terms behave similarly, involving symmetric sums of the trilinear operators $T$ with arguments in $\{u,w_1,w_2,\ldots\}$.
To simplify notation, we introduce the symmetric sum operator $S$ which sums over all distinct permutations of the ordered triple $(u,v,w)$.
For example,
\[
	S\mcal{N}(u,u,w_1) = \mcal{N}(u,u,w_1) + \mcal{N}(u,w_1,u) + \mcal{N}(w_1,u,u).	
\]
Such a symmetric sum has either one, three, or six summands.
With this notation, the full hierarchy of equations for the coefficients takes the following form:
\begin{align}
		w_0(t) &= e^{it\Delta}u_+ + \mcal{N}(u,u,u)(t),\\
		w_1(t) &= e^{it\Delta}v + S\mcal{N}(u,u,w_1)(t),\\
		w_N(t) &= \sum_{j+k+\ell = N} \mcal{N}(w_j,w_k,w_\ell)(t), ~N\geq 2.
\end{align}

\subsection{Coefficient estimates}
Fix $r = \frac{4d}{2d-\gamma}$ and $q = \frac{8}{\gamma}$; then $(q,r)$ is a Schr\"odinger-admissible pair.
Also fix $\alpha = \frac{8}{4-\gamma}$.
With these choices we have $\frac{1}{q'} = \frac{1}{q} + \frac{2}{\alpha}$.

For a time interval $I$, we define the space $Y(I)$ via its norm
\[
	\|f\|_{Y(I)} = \|f\|_{L_t^\infty L_x^2(I)} + \|f\|_{L_t^qL_x^r(I)}.
\]
We define the space $X(I)$ by the norm
\[
	\|f\|_{X(I)} = \|f\|_{Y(I)} + \|J(t)f\|_{Y(I)} + \|\nabla f\|_{Y(I)}.	
\]
\begin{rem}\label{rem:Lorentz}
	$X(I)$ is adapted to the $\Sigma$-norm and is thus used to prove part (1) of Theorem \ref{thm:main1}.
	The results of this section can also be proved in the $\mcal{F}H^1$-adapted space $Z(I)$ defined by the norm 
	\[
		\|f\|_{Z(I)} = \|f\|_{Y(I)} + \|J(t)f\|_{Y(I)}.
	\]
	We leave it to the reader to verify that all of the relevant estimates hold with $Z(I)$ replacing $X(I)$, thus obtaining part (2) of Theorem \ref{thm:main1} as well.

	The results of this section also hold in the Lorentz-modified space $X^*(I)$, defined analogously to $X(I)$ but replacing $Y(I)$ by $Y^*(I)$, where
	\[
		\|f\|_{Y^*(I)} = \|f\|_{L_t^\infty L_x^2(I)} + \|f\|_{L_t^{q,2}L_x^r(I)}.
	\]
	Since $L_t^{q,2}$ is normable for our choice of $q$, these are still Banach spaces.
	Again, we leave it to the reader to check that all of the results we prove in this section can be adapted to the $X^*$ setting as well: the key estimate is
	\[
		\|T(u,v,w)\|_{L_t^{q',2}L_x^{r'}} 
			\lesssim \|u\|_{L_t^{\alpha,\infty}L_x^r}	\|v\|_{L_t^{\alpha,\infty}L_x^r} \|w\|_{L_t^{q,\infty}L_x^r}
	\]
	and the analogous estimates for $J(t)T(u,v,w)$ and $\nabla T(u,v,w)$, which follow from Lemma \ref{lem:multilinear_estimate} and H\"older's inequality for Lorentz spaces.
	The Lorentz space refinement will become relevant in Section \ref{sec:breakdown}.
\end{rem}
We construct the power series expansions of the wave and scattering operators by constructing the coefficients $(w_k)$ on the interval $[0,\infty)$ and then taking the appropriate limits in $t$.
For notational convenience we construct $(w_k)$ first on $[1,\infty)$.
Composing with the time-translation symmetry of HNLS then gives us the coefficients on $[0,\infty)$.
\begin{prop}\label{prop:coefficient_estimates}
	For any $u_+ \in \Sigma$ and any $v\in\Sigma$ with $\|v\|_\Sigma = 1$, there exists a constant $\Lambda = \Lambda(R,d,\gamma) > 0$ such that for all $k\geq 1$,
	\[
		\|w_k\|_{X([1,\infty))} \leq a_k\Lambda^k,
	\]
	where $(a_k)$ is a sequence of positive numbers satisfying
	\[
		a_k \lesssim (C_0a_1)^k
	\]
	for some positive constant $C_0$.
\end{prop}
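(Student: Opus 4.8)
The plan is to run the contraction-plus-majorant scheme of \cites{CaGa09}, with every estimate resting on a single trilinear bound for the Duhamel operator $\mcal{N}$ on the half-line. Writing $b_k := \|w_k\|_{X([1,\infty))}$, there are three steps: prove the trilinear estimate; use it to solve the hierarchy equations and to control $b_1$; and run a generating-function majorant to convert the hierarchy into the bound $b_k \leq a_k\Lambda^k$ with $a_k \lesssim (C_0 a_1)^k$. I describe the wave-operator case; the argument for $\mcal{S}$ is identical with the forward Duhamel operator in place of $\mcal{N}$.

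\emph{The trilinear estimate.} The heart of the matter is the bound: for $f,g,h \in X([1,\infty))$,
\[
	\|\mcal{N}(f,g,h)\|_{X([1,\infty))} \leq C_1(d,\gamma)\,\|f\|_{X([1,\infty))}\|g\|_{X([1,\infty))}\|h\|_{X([1,\infty))},
\]
and likewise with $Z$ or $X^*$ in place of $X$ (Remark \ref{rem:Lorentz}). To prove it I would apply the retarded Strichartz estimates of Proposition \ref{prop:Strichartz} to each of the three constituents of the $X$-norm, reducing matters to bounding $T(f,g,h)$, $\nabla T(f,g,h)$, and $J(s)T(f,g,h)$ in $L_t^{q'}L_x^{r'}([1,\infty))$; here $\nabla\mcal{N}(f,g,h) = i\int_t^\infty e^{i(t-s)\Delta}\nabla T(f,g,h)\,ds$ and, by the commutation identity \eqref{eqn:J(t)_identity}, $J(t)\mcal{N}(f,g,h) = i\int_t^\infty e^{i(t-s)\Delta}J(s)T(f,g,h)\,ds$, while the $L_t^\infty L_x^2$ component of the norm is handled by the $L^2$ dual Strichartz estimate on $[t,\infty) \subseteq [1,\infty)$, uniformly in $t\geq 1$. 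Lemma \ref{lem:multilinear_estimate} then bounds each of $\|T(f,g,h)(s)\|_{r'}$, $\|\nabla T(f,g,h)(s)\|_{r'}$, $\|J(s)T(f,g,h)(s)\|_{r'}$ pointwise in $s$ by sums of products in which one of $f,g,h$ carries a derivative ($\nabla$ or $J(s)$) inside its $L_x^r$-norm and the other two appear in plain $L_x^r$-norms. On those two plain factors I would invoke Lemma \ref{lem:commutator_decay} with $r = \tfrac{4d}{2d-\gamma}$: each becomes $s^{-\theta}$ (with $\theta = \tfrac\gamma4$) times $L_t^\infty L_x^2$-norms of the function and of $J$ applied to it, all controlled by the $X$-norm; the differentiated factor stays in $L_t^q L_x^r$, a norm dominated by $\|\cdot\|_X$. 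Taking $L_s^{q'}([1,\infty))$ and applying H\"older according to $\tfrac1{q'} = \tfrac1q + \tfrac2\alpha$, the estimate closes provided $s^{-\theta} \in L^\alpha([1,\infty))$, i.e. $\theta\alpha = \tfrac{2\gamma}{4-\gamma} > 1$, which is exactly the hypothesis $\gamma > \tfrac43$; the constant $C_1$ then depends only on $d,\gamma$.

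\emph{The hierarchy and the majorant.} By the scattering theory of \cites{HaTs87}, $u$ is global with $\|u\|_{X(\bb{R})} \leq A(R) < \infty$. Working first on a half-line $[T,\infty)$ with $T = T(R,d,\gamma)$ large enough that $\|u\|_{L_t^qL_x^r([T,\infty))}$ and $\|s^{-\theta}\|_{L^\alpha([T,\infty))}$ are small, the trilinear estimate makes $w \mapsto S\mcal{N}(u,u,w)$ a contraction there (note the smallness driving the contraction comes from $\|u\|_{L_t^qL_x^r}$ and the tail weight, not from $\|u\|_{L_t^\infty L_x^2} = \|u_+\|_2$, which is not small but enters only as a factor bounded by $A(R)$); one then extends the coefficients from $[T,\infty)$ to $[1,\infty)$ by the finite-time local theory, which — together with the size of $T$ — is the source of the dependence of $\Lambda$ on $R = \|u_+\|_\Sigma$. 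Solving the hierarchy equations in turn, carrying the already-determined Duhamel tails as fixed forcing, yields unique $w_k \in X([1,\infty))$ and shows $\mcal{L} := (I - S\mcal{N}(u,u,\cdot))^{-1}$ is bounded on $X([1,\infty))$; in particular $b_1 = \|\mcal{L}(e^{it\Delta}v)\|_X \lesssim \|e^{it\Delta}v\|_X \lesssim \|v\|_\Sigma = 1$ (using $J(t)e^{it\Delta}v = e^{it\Delta}(xv)$). Moving the terms of the hierarchy linear in $w_N$ to the left gives, for $N \geq 2$,
\[
	w_N = S\mcal{N}(u,u,w_N) + \sum_{\substack{j+k=N\\ j,k\geq1}} S\mcal{N}(u,w_j,w_k) + \sum_{\substack{j+k+\ell=N\\ j,k,\ell\geq1}} \mcal{N}(w_j,w_k,w_\ell),
\]
whence, applying the trilinear estimate, the bound on $\mcal{L}$, and $\|u\|_X \lesssim 1$, a recursive inequality of the schematic form
\[
	b_N \leq D\Bigl( \sum_{\substack{j+k=N\\ j,k\geq1}} b_j b_k + \sum_{\substack{j+k+\ell=N\\ j,k,\ell\geq1}} b_j b_k b_\ell \Bigr), \qquad D = D(R,d,\gamma).
\]
Defining $a_1 := b_1$ and $a_N := \sum_{j+k=N} a_j a_k + \sum_{j+k+\ell=N} a_j a_k a_\ell$ for $N \geq 2$ (sums over positive indices), an induction gives $b_k \leq a_k\Lambda^{k-1} \leq a_k\Lambda^k$ with $\Lambda := \max(D,1)$. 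Finally $a_k = c_k a_1^k$, where $(c_k)$ is the universal sequence with $c_1 = 1$ and the same quadratic-plus-cubic recursion; its generating function $\mcal{C}(z) = \sum_k c_k z^k$ satisfies $\mcal{C} = z + \mcal{C}^2 + \mcal{C}^3$, which has a solution analytic near $z=0$ (the linearization $\partial_w(w - z - w^2 - w^3)$ equals $1$ at the origin, so the analytic implicit function theorem applies). Hence $c_k \lesssim \rho_0^{-k}$ for the positive radius of convergence $\rho_0$, and $a_k = c_k a_1^k \lesssim (C_0 a_1)^k$ with $C_0 := \rho_0^{-1}$ a universal constant.

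\emph{Main obstacle.} The crux is the trilinear estimate on the unbounded interval $[1,\infty)$: one must spend the decay of Lemma \ref{lem:commutator_decay} on precisely the two undifferentiated factors so that the surviving time weight $s^{-\theta}$ lands in $L^\alpha$, and this integrability is borderline exactly at $\gamma = \tfrac43$. Everything downstream — the half-line reduction that renders the linearized hierarchy map a contraction, and the generating-function majorant that upgrades the hierarchy estimates to geometric growth — is routine bookkeeping.
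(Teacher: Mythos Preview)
Your proposal is correct and follows the same overall Carles--Gallagher scheme as the paper: the trilinear estimate you derive (Strichartz, Lemma~\ref{lem:multilinear_estimate}, then Lemma~\ref{lem:commutator_decay} on the two undifferentiated factors to produce the weight $s^{-\theta}\in L^\alpha([1,\infty))$ precisely when $\gamma>\tfrac43$) is exactly the paper's core estimate. Two implementation choices differ. First, to invert the linearized map $w\mapsto S\mcal{N}(u,u,w)$ you retreat to $[T,\infty)$ for $T$ large and then patch back to $[1,\infty)$ by finite-time local theory; the paper instead decomposes $[1,\infty)$ directly into finitely many intervals $I_k$ on each of which the time-weight integral $\int_{I_k}|t|^{-2\gamma/(4-\gamma)}dt$ is small enough for a $\tfrac12$-contraction, and glues via $\|1_{A\cup B}f\|_X\sim\|1_A f\|_X+\|1_B f\|_X$ --- slightly cleaner since no separate local step is needed. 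Second, for the majorant growth you solve $\mcal{C}=z+\mcal{C}^2+\mcal{C}^3$ by the analytic implicit function theorem; the paper instead invokes a direct combinatorial lemma (Lemma~\ref{lem:sequential_gronwall}, after \cites{Kishimoto18,BeTa06}): from $a_N\leq C\sum_{j+k+\ell=N,\,j,k,\ell\neq N}a_ja_ka_\ell$ one proves $\langle N\rangle^2 a_N\leq C_1(C_0a_1)^N$ by induction using $\sum_k\langle k\rangle^{-2}<\infty$. This lemma also absorbs the $u$-terms by allowing zero indices in the convolution rather than splitting into your quadratic-plus-cubic form. Both routes give the same conclusion; the paper's are marginally more self-contained, while yours make the algebraic structure (bounded linearized inverse, analytic majorant) more explicit.
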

\begin{cor}
	Under the same hypotheses, the series
	\[
		\sum_{k=1}^\infty \ve^k w_k	
	\]
	converges in the norm topology of $X(\bb{R})$ for all sufficiently small $\ve>0$.
\end{cor}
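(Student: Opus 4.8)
The plan is to treat the $(w_k)$ as the Taylor coefficients of the solution map, to prove once and for all a trilinear bound for $\mcal{N}$ on the space $X$, and then to dominate the whole hierarchy by a scalar (cubic algebraic) majorant. Throughout I work on $I=[1,\infty)$ and, following the reduction noted above, I take $\|u_+\|_\Sigma\le R$ with $R$ small, so that $\beta:=\|w_0\|_{X(I)}=\|u\|_{X(I)}\lesssim\|u_+\|_\Sigma$ is small; the bound on $\|u\|_{X(I)}$ itself is the small-data contraction for \eqref{eqn:HNLS}, run with the estimate of Step 1. (For general $u_+$ one first constructs the $w_k$ on $[T,\infty)$ with $T=T(R,d,\gamma)$ large, where the decay gain recorded in Step 1 makes the relevant norms of $u$ small, and then extends to $[1,T]$ by the finite-time local theory; I suppress this standard reduction.)

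\textbf{Step 1: the master multilinear estimate.} The main task is to show there is $C_*=C_*(d,\gamma)$ with $\|\mcal{N}(f,g,h)\|_{X(I)}\le C_*\|f\|_{X(I)}\|g\|_{X(I)}\|h\|_{X(I)}$, and likewise for $S\mcal{N}$. Since $\nabla$ commutes with $e^{i(t-s)\Delta}$ and $J(t)e^{i(t-s)\Delta}=e^{i(t-s)\Delta}J(s)$ by \eqref{eqn:J(t)_identity}, both $\nabla\mcal{N}(f,g,h)$ and $J(t)\mcal{N}(f,g,h)$ are again (advanced) Duhamel integrals, now of $\nabla T(f,g,h)$ resp. $J(s)T(f,g,h)$; the inhomogeneous Strichartz estimates of Proposition \ref{prop:Strichartz} with the dual pair $(q',r')$ then bound the full $X(I)$-norm of $\mcal{N}(f,g,h)$ by $\|T(f,g,h)\|_{L_t^{q'}L_x^{r'}(I)}+\|\nabla T(f,g,h)\|_{L_t^{q'}L_x^{r'}(I)}+\|J(t)T(f,g,h)\|_{L_t^{q'}L_x^{r'}(I)}$. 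Using $\tfrac{1}{q'}=\tfrac{2}{\alpha}+\tfrac{1}{q}$, Hölder in time, and Lemma \ref{lem:multilinear_estimate}, I estimate each term by products in which exactly one undifferentiated factor lies in $L_t^qL_x^r$ and the other two undifferentiated factors lie in $L_t^\alpha L_x^r$; whenever $\nabla$ or $J$ falls on a factor, that factor goes into the $L_t^qL_x^r$ slot, so the two $L_t^\alpha L_x^r$ slots never see a differentiated function and Lemma \ref{lem:commutator_decay} applies to them directly:
\[
	\|\phi\|_{L_t^\alpha L_x^r([T,\infty))}\lesssim\Big(\int_T^\infty t^{-\alpha\theta}\,dt\Big)^{1/\alpha}\|\phi\|_{X([T,\infty))}\sim T^{-\mu}\|\phi\|_{X([T,\infty))},
\]
with $\theta=\tfrac{\gamma}{4}$ and $\mu=\tfrac{3\gamma-4}{8}$; the integral converges precisely because $\alpha\theta=\tfrac{2\gamma}{4-\gamma}>1\iff\gamma>\tfrac43$, which is exactly where the hypothesis on $\gamma$ enters. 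Taking $T=1$ gives the stated bound; taking $T$ large gives the stronger $\|\mcal{N}(f,g,h)\|_{X([T,\infty))}\lesssim T^{-2\mu}\prod\|\cdot\|_{X([T,\infty))}$, which supplies the smallness in the large-data reduction. The $\mcal{F}H^1$- and Lorentz-adapted variants of Remark \ref{rem:Lorentz} are identical.

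\textbf{Step 2: inverting the linearized operator, and the majorant.} With $\beta$ small, Step 1 gives $\|L\|_{X(I)\to X(I)}\le 6C_*\beta^2<\tfrac12$ for $Lw:=S\mcal{N}(u,u,w)$, so $(I-L)^{-1}$ exists on $X(I)$ with norm $\le 2$. Hence $w_1=(I-L)^{-1}e^{it\Delta}v$ with $\|w_1\|_{X(I)}\lesssim\|v\|_\Sigma=1$. For $N\ge 2$, the only terms in $w_N=\sum_{j+k+\ell=N}\mcal{N}(w_j,w_k,w_\ell)$ that contain $w_N$ are those with one index equal to $N$ and the others $0$, whose sum is $Lw_N$; thus $(I-L)w_N=g_N$, where $g_N:=\sum_{j+k+\ell=N,\,j,k,\ell\le N-1}\mcal{N}(w_j,w_k,w_\ell)$ depends only on $w_0,\dots,w_{N-1}$, and so $\|w_N\|_{X(I)}\le 2C_*\sum_{j+k+\ell=N,\,j,k,\ell\le N-1}\|w_j\|_{X(I)}\|w_k\|_{X(I)}\|w_\ell\|_{X(I)}$. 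Writing $b_k=\|w_k\|_{X(I)}$ and $B(x)=\sum b_kx^k$, and enlarging the sum to all $j,k,\ell\ge 0$, this yields $b_N\le 2C_*\,[x^N]B(x)^3$ for $N\ge 2$. Let $A(x)=\sum a_kx^k$ be the branch of the cubic $A=\beta+b_1x+2C_*A^3$ analytic near $0$ with $A(0)$ the root close to $\beta$; this is well defined and analytic in a disk $|x|<\rho$ with $\rho=\rho(R,d,\gamma)>0$, the branch being algebraic and nondegenerate at the small value $a_0=\beta+\mcal{O}(\beta^3)$. A strong induction comparing the two recursions, using $a_0\ge\beta$ and $a_1\ge b_1$, gives $b_k\le a_k$ for all $k$, and Cauchy's estimates give $a_k\le M\rho^{-k}$: so $(b_k)$ grows at most geometrically. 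This is the asserted bound $\|w_k\|_{X(I)}\le a_k\Lambda^k$ after absorbing the remaining constants into $\Lambda=\Lambda(R,d,\gamma)$, and $a_k\lesssim(C_0a_1)^k$ holds with $C_0=C_0(R,d,\gamma)$ since $a_1>0$ and $a_k\lesssim\rho^{-k}$.

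\textbf{Corollary and the main obstacle.} The Corollary is then immediate: by Proposition \ref{prop:coefficient_estimates}, $\sum_{k\ge1}\ve^k\|w_k\|_{X(I)}\le\sum_{k\ge1}(\ve\Lambda)^ka_k\lesssim\sum_{k\ge1}(\ve\Lambda C_0a_1)^k<\infty$ for $\ve<(\Lambda C_0a_1)^{-1}$, and $X$ is complete, so the series converges in norm; composing with the time translation to $[0,\infty)$ (and running the analogous backward construction) upgrades this to convergence in $X(\bb{R})$. The one genuinely delicate point is Step 1: the estimate must be arranged so that every term keeps two factors in the time-decaying norm $L_t^\alpha L_x^r$ while the $J$ and $\nabla$ in the definition of $X$ never produce an uncontrolled second-order quantity such as $J^2$ or $\nabla J$ — which is why one must commute $\nabla$ and $J$ onto a single factor and place that factor in the non-decaying slot — and so that the time weight $t^{-\alpha\theta}$ is integrable, which is exactly the constraint $\gamma>\tfrac43$. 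Given the multilinear bound and the smallness of $\|u\|_{X(I)}$, the inversion of $I-L$, the cubic majorant, and the convergence are routine.
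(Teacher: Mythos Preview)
Your argument is correct. The Corollary itself is immediate from Proposition~\ref{prop:coefficient_estimates} in both your write-up and the paper's: once $\|w_k\|_{X(I)}$ is bounded by a geometric sequence, absolute convergence in the Banach space $X$ gives norm convergence, and the passage from $[1,\infty)$ to $[0,\infty)$ (and to $\mathbb{R}$) is the same translation/backward-time reduction in both. Where you genuinely diverge from the paper is in how you obtain the geometric bound on the coefficients. You package the nonlinear estimates into a single trilinear bound $\|\mathcal{N}(f,g,h)\|_{X(I)}\le C_*\prod\|\cdot\|_{X(I)}$, invert $I-L$ under a smallness assumption on $\|u\|_{X(I)}$, and then control the resulting recursion by a Cauchy majorant: the coefficients $(a_k)$ of the analytic branch of $A=\beta+b_1x+2C_*A^3$ dominate $(b_k)$ termwise, and Cauchy's estimates on $A$ yield the geometric growth. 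The paper instead proves an explicit sequential Gr\"onwall lemma (Lemma~\ref{lem:sequential_gronwall}), bounding $\langle N\rangle^2 a_N$ inductively with bare hands, and avoids your small-data hypothesis on $u_+$ by decomposing $[1,\infty)$ into finitely many subintervals on which the $L_t^\alpha L_x^r$ norm of $u$ is small. Your majorant method is more conceptual and generalizes cleanly to higher-degree nonlinearities; the paper's approach is more self-contained and handles arbitrary $u_+\in\Sigma$ without the parenthetical large-time/local-theory reduction you defer. Both are standard; the underlying analytic input (the integrability of $t^{-\alpha\theta}$ on $[1,\infty)$ forced by $\gamma>\tfrac{4}{3}$, and the product structure of Lemma~\ref{lem:multilinear_estimate} ensuring $\nabla$ and $J$ land on only one factor) is identical.
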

\begin{lem}[\cites{Kishimoto18, BeTa06}]\label{lem:sequential_gronwall}
	Let $(a_j)$ be a sequence of positive numbers satisfying
	\[
		a_N \leq C\sum_{\substack{j+k+\ell=N \\ j,k,\ell\neq N}} a_ja_ka_\ell, ~N\geq 2.
	\]
	Then there exist constants $C_0, C_1>0$ such that
	\[
		a_N \leq C_1(C_0a_1)^N	
	\]
	for all $N\geq 1$.
\end{lem}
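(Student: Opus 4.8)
The plan is to prove this by the majorant (generating‑function) method, in the style of \cites{BeTa06,Kishimoto18}. First I would replace the inequality by an equality: define the \emph{majorant sequence} $(b_N)_{N\ge0}$ by $b_0=a_0$, $b_1=a_1$, and $b_N=C\sum_{j+k+\ell=N,\ j,k,\ell\ne N}b_jb_kb_\ell$ for $N\ge2$. Since every summand is a nondecreasing function of the $b_j$ with $j<N$ and all quantities are positive, a routine strong induction gives $a_N\le b_N$ for all $N$, so it suffices to bound $(b_N)$. I note that in every intended application $a_0\lesssim a_1$ — indeed $a_0=\|w_0\|_X\lesssim\|u_+\|_\Sigma$ is taken small while $a_1=\|w_1\|_X\sim\|v\|_\Sigma=1$ — so I will freely normalize $b_0\le b_1$.

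Next I would pass to the generating function $B(z)=\sum_{N\ge1}b_Nz^N$ with its shift $\tilde B=b_0+B$. The $z^N$‑coefficient of $\tilde B^3$ is $\sum_{j+k+\ell=N}b_jb_kb_\ell$, and the three triples with an index equal to $N$ contribute exactly $3b_0^2b_Nz^N$; summing the recursion against $z^N$ therefore yields the polynomial functional equation
\[
	(1+3Cb_0^2)\bigl(B(z)-b_1z\bigr)=C\bigl((b_0+B(z))^3-b_0^3-3b_0^2b_1z\bigr),
\]
which has the form $\Phi(B(z),z)=0$ with $\Phi$ cubic in the first slot, affine in the second, $\Phi(0,0)=0$, and $\partial_B\Phi(0,0)=(1+3Cb_0^2)-3Cb_0^2=1\ne0$. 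The analytic implicit function theorem then produces a holomorphic $B$ on some disc $\{|z|<\rho\}$, $\rho>0$, and the Cauchy estimates give $b_N\le C_1\rho^{-N}$. To see that $\rho$ may be taken $\gtrsim a_1^{-1}$ — which is what converts geometric decay into the stated bound — I would use scaling: in the model case $b_0=0$, where the equation reads $B=a_1z+CB^3$, the substitution $B(z)=C^{-1/2}\beta\bigl(a_1C^{1/2}z\bigr)$ reduces it to the \emph{universal} equation $\beta(w)=w+\beta(w)^3$, whose solution has a fixed radius of convergence $\rho_0>0$; hence $\rho\sim a_1^{-1}$ and $b_N\le C_1(C_0a_1)^N$ with $C_0\sim C^{1/2}/\rho_0$. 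The case $b_0\ne0$, $b_0\le a_1$, follows by the same scaling together with the perturbative stability of the implicit‑function solution, or is simply absorbed into the constants.

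An equivalent, more hands‑on route — which I would probably write out in preference to invoking the implicit function theorem — is a direct strong induction establishing $b_N\le C_1(C_0a_1)^NN^{-2}$. The weight $N^{-2}$ is the whole point: it is the ``Catalan exponent'' that makes the ansatz stable under cubic convolution, the underlying elementary estimates being
\[
	\sum_{\substack{j+k+\ell=N\\ j,k,\ell\ge1}}\frac{1}{j^2k^2\ell^2}\le\frac{\kappa}{N^2},\qquad \sum_{\substack{j+k=N\\ j,k\ge1}}\frac{1}{j^2k^2}\le\frac{\kappa'}{N^2}
\]
for universal $\kappa,\kappa'$, both immediate from the fact that the largest index is $\ge N/3$, so its reciprocal square is $\mcal{O}(N^{-2})$, while the sum over the remaining index or indices is bounded by $\zeta(2)^2$. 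Feeding the ansatz into the recursion, the purely cubic terms contribute a factor $CC_1^2\kappa$ and the terms containing $b_0$ a factor $3Cb_0C_1\kappa'\lesssim3Ca_1C_1\kappa'$; taking $C_1$ small (legitimate since $a_1\sim1$) and then $C_0\ge C_1^{-1}$ to cover the base case $N=1$ closes the induction, and $N^{-2}\le1$ upgrades it to the stated estimate.

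The only genuinely nontrivial feature — and the step I would be most careful about — is precisely this competition between polynomial and geometric factors: the naive ansatz $b_N\le C_1(C_0a_1)^N$ with no polynomial weight does \emph{not} close, since the number of compositions $j+k+\ell=N$ grows like $N^2$ and overwhelms a pure geometric bound; it is exactly the Catalan‑type gain (equivalently the $N^{-2}$ weight, equivalently the algebraicity of the generating function) that resolves it. Everything else is bookkeeping, and the argument is insensitive to the exact index conventions in the recursion — whether or not a zeroth term appears — because $a_0$ is in any case dominated by $a_1$ in every application.
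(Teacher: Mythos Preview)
Your proposal is correct. The hands-on induction you say you would actually write out is essentially the paper's own proof: the paper strengthens the claim to $\langle N\rangle^2 a_N\le C_1(C_0a_1)^N$ and closes the same strong induction via the same convolution estimate, using Japanese brackets $\langle j\rangle$ in place of bare $j$ so that the index $0$ is absorbed into the single trilinear bound rather than split off into your separate binary sum; the choice of constants ($C_1$ small determined by the convolution constant, then $C_0$ large to cover the base case) is identical to yours. Your generating-function alternative is also valid and is the more classical route in the spirit of the references cited: it trades the explicit discrete convolution estimate for the analytic implicit function theorem and makes the dependence $C_0\sim C^{1/2}$ transparent through the scaling $B(z)=C^{-1/2}\beta(a_1C^{1/2}z)$, at the modest cost of heavier machinery; your diagnosis of why the naive geometric ansatz fails and why the $N^{-2}$ weight repairs it is exactly the point of both arguments.
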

\begin{rem}
	Our treatment differs from that of \cites{CaGa09} in the use of this lemma.
\end{rem}
\begin{proof}
	We claim the stronger inequality
	\begin{equation}\label{eqn:stronger_sequential_gronwall}
		\langle N\rangle^2 a_N \leq C_1(C_0a_1)^N.	
	\end{equation}
	We proceed by induction on $N$.

	First we assume $C_1C_0 \geq 1$.
	Under this assumption, the base case $N=1$ is trivial.

	Now assume \eqref{eqn:stronger_sequential_gronwall} holds for $1,\ldots,N-1$.
	We estimate:
	\begin{align*}
		\langle N\rangle^2 a_N
			&\leq C\sum_{\substack{j+k+\ell=N \\ j,k,\ell\neq N}} a_ja_ka_\ell\langle j+k+\ell\rangle^2\\
			&\leq CC_1^3(C_0a_1)^N \sum_{\substack{j+k+\ell=N \\ j,k,\ell\neq N}} \frac{\langle j+k+\ell\rangle^2}{\langle j\rangle^2\langle k\rangle^2\langle \ell\rangle^2}\\
			&\leq 3CC_1^3(C_0a_1)^N \sum_{\substack{j+k+\ell=N \\ j,k,\ell\neq N}} \frac{\langle j\rangle^2 + \langle j\rangle^2 + \langle j\rangle^2}{\langle j\rangle^2\langle k\rangle^2\langle \ell\rangle^2}\\
			&\leq 9CC_1^3(C_0a_1)^N \sum_{\substack{j+k+\ell=N \\ j,k,\ell\neq N}} \frac{\langle j\rangle^2}{\langle j\rangle^2\langle k\rangle^2\langle \ell\rangle^2}.
	\end{align*}
	The remaining sum we bound as follows:
	\begin{align*}
		\sum_{\substack{j+k+\ell=N \\ j,k,\ell\neq N}} \frac{\langle j\rangle^2}{\langle j\rangle^2\langle k\rangle^2\langle \ell\rangle^2}
			\leq \sum_{\ell=0}^N \sum_{k=0}^{N-\ell} \frac{1}{\langle k\rangle^2\langle \ell\rangle^2}
			\leq \left(\sum_{k=0}^\infty \langle k\rangle^{-2}\right)^2 = C_2^2.
	\end{align*}
	Therefore
	\[
		\langle N\rangle^2 a_N \leq (9CC_1^2C_2^2)C_1(C_0a_1)^N.
	\]
	The claim then follows by choosing $C_1 = (9CC_2^2)^{-\frac{1}{2}}$.

	Finally, we note that once $C_1$ is fixed as above, we are free to choose $C_0$ as large as we like in \eqref{eqn:stronger_sequential_gronwall}.
	Thus we can always assume $C_1C_0\geq 1$, justifying our earlier assumption.
\end{proof}
\begin{proof}[Proof of Proposition \ref{prop:coefficient_estimates}]
	We proceed by induction on $k$.

	Take $k=1$.
	Let $I$ be a time interval.
	By Strichartz, Lemma \ref{lem:multilinear_estimate}, and H\"older in time, we find that
	\begin{align*}
		\|1_{t\in I}w_1\|_{\infty,2} 
			&\leq \|1_{t\in I}e^{it\Delta}v\|_{\infty,2} + C(d,\gamma)\|S\mcal{N}(u,u,w_1)\|_{\infty,2}\\
			&\leq \|1_{t\in I}e^{it\Delta}v\|_{\infty,2} + C(d,\gamma)\|1_{t\in I}u\|_{\alpha,r}^2\|1_{t\in I}w_1\|_{q,r}.
	\end{align*}
	By Lemma \ref{lem:commutator_decay} and our choice of $r$ and $\alpha$,
	\begin{align*}
		\|1_{t\in I}u\|_{\alpha,r} 
			&\lesssim \|1_{t\in I}u\|_{\infty,2}^{1-\theta}\|1_{t\in I}J(t)u\|_{\infty,2}^\theta \left( \int_I |t|^{-2\gamma/(4-\gamma)} ~dt \right)^{1/\alpha} \\
			&\leq \|1_{t\in I}u\|_{X([1,\infty))}\left( \int_I |t|^{-2\gamma/(4-\gamma)} ~dt \right)^{1/\alpha}.
	\end{align*}
	Since $\gamma>\frac{4}{3}$, $\frac{2\gamma}{4-\gamma}>1$.
	Since $u\in X([1,\infty))$, we can decompose $[1,\infty)$ into a union of finitely many disjoint intervals $I_k$ such that
	\[
		\|1_{t\in I_k} w_1\|_{\infty,2} \leq \|1_{t\in I_k}e^{it\Delta}v\|_{\infty,2} + \frac{1}{12} \|1_{t\in I_k}w_1\|_{X([1,\infty))}.
	\]
	Arguing similarly for the remaining parts of the $X(I)$ norm, we find that
	\[
		\|1_{t\in I_k}w_1\|_{X([1,\infty))} \leq \|1_{t\in I_k} e^{it\Delta}v\|_{X([1,\infty))} + \frac{1}{2}\|1_{t\in I_k}w_1\|_{X([1,\infty))}.
	\]
	Since $\|1_{t\in A\cup B}f\|_{X([1,\infty))} \sim \|1_{t\in A} f\|_{X([1,\infty))} + \|1_{t\in B}f\|_{X([1,\infty))}$ whenever $A$ and $B$ are disjoint, we conclude by Strichartz and \eqref{eqn:J(t)_identity} that
	\[
		\|w_1\|_{X([1,\infty))} \lesssim_{d,\gamma} \|e^{it\Delta}v\|_{X([1,\infty))} \lesssim\|v\|_\Sigma  = 1.
	\]
	Let $C(d,\gamma)$ be the implicit constant in this estimate, and set $\Lambda = C(d,\gamma)$.
	This establishes the case $k=1$.

	Now define the sequence $(a_N)$ by $a_1 = C(d,\gamma)$ from above, and
	\[
		a_N = C'(d,\gamma)\sum_{\substack{j+k+\ell=N \\ j,k,\ell\neq N}} a_ja_ka_\ell,
	\]
	where $C'(d,\gamma)$ is a constant to be determined.
	Assume the bound
	\[
		\|w_j\|_{X([1,\infty))}	\leq a_j\Lambda^j
	\]
	for $j=1,\ldots,N-1$, and consider $w_N$.
	Working as in the previous case, we find that
	\begin{align*}
		\|1_{t\in I} w_N\|_{X([1,\infty))} 
			&\lesssim \sum_{\substack{j+k+\ell=N \\ j,k,\ell\neq N}} \|1_{t\in I} S\mcal{N}(w_j,w_k,w_\ell) \|_{X([1,\infty))} \\
			&~~~~+ C(I)^2\|1_{t\in I}w_0\|_{X([1,\infty))}^2\|1_{t\in I} w_N\|_{X([1,\infty))},
	\end{align*}
	where
	\[
		C(I) = \left( \int_I |t|^{-2\gamma/(4-\gamma)} ~dt \right)^{1/\alpha}.
	\]
	Once again, this implies that
	\[
		\|w_N\|_{X([1,\infty))}
			\lesssim \sum_{\substack{j+k+\ell=N \\ j,k,\ell\neq N}} \|S\mcal{N}(w_j,w_k,w_\ell) \|_{X([1,\infty))}.
	\]
	By Strichartz, Lemma \ref{lem:multilinear_estimate}, H\"older, Lemma \ref{lem:commutator_decay}, and invoking the induction hypothesis, we find that
	\[
		\|w_N\|_{X[1,\infty)}
			\lesssim \Lambda^N \sum_{\substack{j+k+\ell=N \\ j,k,\ell\neq N}} a_ja_ka_\ell.	
	\]
	The implicit constant in this estimate can be defined independently of $N$.
	Thus if we set $C'(d,\gamma)$ to be this constant, then we arrive at
	\[
		\|w_N\|_{X([1,\infty))}
			\lesssim a_N\Lambda^N.
	\]
	Invoking Lemma \ref{lem:sequential_gronwall} to control the growth of $(a_N)$ completes the proof.
\end{proof}

\subsection{Convergence}

We have shown that the series
\[
	\sum_{k\geq 1} \ve^k w_k	
\]
is norm convergent in the space $X([1,\infty))$ for all sufficiently small $\ve>0$.
Our next goal is to show that it converges to the correct object.
\begin{prop}\label{prop:convergence_wave_ops}
	Let $\ve>0$ be such that $\sum_{k\geq 1} \ve^k w_k$ converges in $X([1,\infty))$.
	Then
	\[
		\left\| u^\ve - u - \sum_{k=1}^{N-1} \ve^k w_k \right\|_{X([1,\infty))} \to 0 ~\tnm{as}~N\to\infty.	
	\]
\end{prop}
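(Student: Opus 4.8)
The plan is to set $v^\ve_N = u^\ve - u - \sum_{k=1}^{N-1}\ve^k w_k$ and show that $\|v^\ve_N\|_{X([1,\infty))}\to 0$ as $N\to\infty$, by deriving an integral equation for $v^\ve_N$ and closing a contraction-type estimate. First I would write $w^\ve = u^\ve - u$; from the integral equations for $u^\ve$ and $u$ and the trilinearity of $T$, $w^\ve$ satisfies
\[
	w^\ve(t) = \ve e^{it\Delta}v + S\mcal{N}(u,u,w^\ve)(t) + S\mcal{N}(u,w^\ve,w^\ve)(t) + \mcal{N}(w^\ve,w^\ve,w^\ve)(t),
\]
i.e. the exact perturbation equation obtained by expanding $F(u+w^\ve)$. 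The formal series $\sum_k \ve^k w_k$ is the unique solution of this equation obtained by matching powers, so the difference $v^\ve_N = w^\ve - \sum_{k=1}^{N-1}\ve^kw_k$ satisfies a linear-in-$v^\ve_N$ integral equation of the form
\[
	v^\ve_N(t) = S\mcal{N}(u,u,v^\ve_N)(t) + \big[\text{terms at least quadratic in the $w_k$'s with total $\ve$-order} \geq N\big] + \big[\text{terms containing at least one factor } v^\ve_N \text{ and one factor } w^\ve\big].
\]

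The key steps, in order, are: (i) split $[1,\infty)$ as in Proposition \ref{prop:coefficient_estimates} into finitely many subintervals $I_k$ on which the ``diagonal'' operator $g \mapsto S\mcal{N}(u,u,g)$ has $X([1,\infty))$-operator-norm at most $\frac12$; this is possible because $u\in X([1,\infty))$ and the Strichartz/Lemma \ref{lem:multilinear_estimate}/Lemma \ref{lem:commutator_decay} chain used there gives a gain $C(I)^2 = \big(\int_I|t|^{-2\gamma/(4-\gamma)}\,dt\big)^{2/\alpha}$ that is small on short-enough intervals since $2\gamma/(4-\gamma)>1$. (ii) On each such $I_k$, absorb the $S\mcal{N}(u,u,v^\ve_N)$ term into the left side, so that
\[
	\|1_{t\in I_k}v^\ve_N\|_{X([1,\infty))} \lesssim \|1_{t\in I_k}\big(\text{$\ve$-order $\geq N$ remainder}\big)\|_{X([1,\infty))} + \|1_{t\in I_k}v^\ve_N\|_{X([1,\infty))}\cdot\mcal{O}\!\left(\|w^\ve\|_{X}+\|w^\ve\|_X^2\right);
\]
shrinking $R$ (equivalently $\|u_+\|_\Sigma$, hence $\|w^\ve\|_X$) so the last factor is $\leq\frac12$ lets us absorb it too. (iii) Sum over the finitely many $I_k$ using near-additivity of the $X$-norm over disjoint time intervals (as in Proposition \ref{prop:coefficient_estimates}), obtaining $\|v^\ve_N\|_{X([1,\infty))}\lesssim \|r_N^\ve\|_{X([1,\infty))}$, where $r_N^\ve$ is the $\ve$-order-$\geq N$ remainder term. (iv) Estimate $\|r_N^\ve\|_{X([1,\infty))}$: it is a finite sum of terms $\ve^m S\mcal{N}(w_{j_1},w_{j_2},w_{j_3})$ with $m\geq N$ coming from trilinear products of the convergent series minus its partial sums, plus tail contributions $\sum_{m\geq N}\ve^m w_m$ and cross terms; each is controlled by $\|w_{j}\|_{X}\leq a_j\Lambda^j$ from Proposition \ref{prop:coefficient_estimates} together with $\sum_j a_j\Lambda^j\ve^j<\infty$ for small $\ve$, so $\|r_N^\ve\|_{X([1,\infty))}\to 0$ as $N\to\infty$ (it is bounded by the tail of a convergent series). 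Hence $\|v^\ve_N\|_{X([1,\infty))}\to 0$.

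The main obstacle I expect is the bookkeeping in step (iv): one must carefully identify \emph{all} the terms appearing in the integral equation for $v^\ve_N$ after substituting $w^\ve = \sum_{k=1}^{N-1}\ve^kw_k + v^\ve_N$ into the cubic nonlinearity, separating them cleanly into (a) the linear-in-$v^\ve_N$ diagonal piece to be absorbed, (b) pieces containing $v^\ve_N$ together with at least one $w^\ve$-factor (small by smallness of $\|w^\ve\|_X$), and (c) the purely-$(w_k)$ remainder $r_N^\ve$, and then verifying that $r_N^\ve$ is genuinely $O(\text{tail of a convergent series})$ — this requires expanding $(\sum_k\ve^kw_k)^{\otimes 3}$ minus the partial sums and checking that every surviving term has total $\ve$-order at least $N$, which is exactly where the matching-of-powers definition of $(w_k)$ is used. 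Everything else (the interval decomposition, the absorption, the near-additivity of the $X$-norm) is routine and parallels the proof of Proposition \ref{prop:coefficient_estimates}. Once the estimate on $[1,\infty)$ is in hand, translating in time and passing to the limits $t\to 0$ and $t\to+\infty$ yields the norm-convergent expansions of $\mcal{W}$ on $[0,\infty)$ and of $\mcal{S}$, with the analogous argument on $(-\infty,0]$ for backward time; the $\mcal{F}H^1$ case is identical with $Z(I)$ in place of $X(I)$.
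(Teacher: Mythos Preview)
Your overall strategy matches the paper's: write an integral equation for $W_{\geq N}:=u^\ve-u-\sum_{k=1}^{N-1}\ve^k w_k$, absorb the terms containing $W_{\geq N}$ via the interval decomposition of Proposition~\ref{prop:coefficient_estimates}, and bound the purely-$(w_k)$ remainder by the tail of the convergent series. Steps (i), (iii), and (iv) are essentially what the paper does.

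The gap is in step (ii). You propose to absorb the ``(b)'' terms---those containing one factor $v^\ve_N$ and at least one factor coming from $w^\ve$ or $W_{<N}$---by making $\|w^\ve\|_X$ small, and you write ``shrinking $R$ (equivalently $\|u_+\|_\Sigma$, hence $\|w^\ve\|_X$)''. This causal chain is wrong: $\|w^\ve\|_X=\|u^\ve-u\|_X$ is governed by $\ve$ and $\|v\|_\Sigma$, not by $\|u_+\|_\Sigma$; shrinking $R$ does nothing to $\|w^\ve\|_X$. Moreover the proposition is stated for \emph{every} $\ve$ in the convergence range and with no smallness hypothesis on $u_+$, so you are not free to impose either.

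The paper avoids this entirely by not separating (a) from (b). It groups \emph{all} trilinear terms containing at least one factor $W_{\geq N}$ into a single expression $G(u,W_{<N},W_{\geq N})$ and absorbs the whole thing via the interval decomposition: on each $I_k$ one has
\[
\Big\|1_{I_k}\int_t^\infty e^{i(t-s)\Delta}G\,ds\Big\|_X \lesssim C(I_k)^2\big(\|u\|_X+\|W_{<N}\|_X+\|W_{\geq N}\|_X\big)^2\|1_{I_k}W_{\geq N}\|_X,
\]
and since $\|u\|_X$, $\|W_{<N}\|_X\le\sum_k\ve^k\|w_k\|_X$, and $\|W_{\geq N}\|_X\le\|w^\ve\|_X+\|W_{<N}\|_X$ are all bounded \emph{uniformly in $N$}, one can choose finitely many intervals (independent of $N$) making $C(I_k)^2$ small enough to absorb. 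No smallness of $u_+$ or of $w^\ve$ is needed---only boundedness. Once you make this correction your argument goes through and coincides with the paper's.
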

\begin{proof}
	Let $W_{\geq N} = u^\ve - u - \sum_{k=1}^{N-1} \ve^k w_k = u^\ve - u - W_{<N}$.
	Then for $N\geq 2$, $W_{\geq N}$ satisfies the equation
	\begin{align*}
		W_{\geq N}(t) 
			&= \mcal{N}(u^\ve,u^\ve,u^\ve) - \mcal{N}(u,u,u) - \sum_{M=1}^{N-1} \ve^M\sum_{j+k+\ell = M} S\mcal{N}(w_j,w_k,w_\ell)\\
			&= i\int_t^\infty e^{i(t-s)\Delta} G(u,W_{<N},W_{\geq N})~ds + \mcal{N}(W_{<N},W_{< N}, W_{<N}) \\
			&\hspace{8em} - \sum_{M=1}^{N-1} \ve^M\sum_{j+k+\ell = M} S\mcal{N}(w_j,w_k,w_\ell)
	\end{align*}
	where $G(u,W_{<N},W_{\geq N})$ consists of all the terms $T(a,b,c)$ with at least one argument equal to $W_{\geq N}$.
	Arguing as in the proof of Proposition \ref{prop:coefficient_estimates}, we can decompose $[1,\infty)$ into a finite collection of disjoint intervals $I_k$ such that
	\[
		\left\| 1_{t\in I_k}\int_t^\infty e^{i(t-s)\Delta} G(u,W_{<N},W_{\geq N})~ds \right\|_{X([1,\infty))} \leq \frac{1}{2}\|1_{t\in I_k} W_{\geq N}\|_{X([1,\infty))}.
	\]
	For the remaining terms, we write
	\begin{align*}
		\mcal{N}(W_{<N},W_{<N},W_{<N})
			&= \sum_{M=1}^{N-1} \ve^M\sum_{j+k+\ell = M} S\mcal{N}(w_j,w_k,w_\ell)\\
			&\hspace{2em} + \sum_{\substack{ 1 \leq j,k\leq N-1 \\ j+k\geq N}} \ve^{j+k} S\mcal{N}(u,w_j,w_k)\\
			&\hspace{2em} + \sum_{\substack{ 1\leq j,k,\ell\leq N-1 \\ j+k+\ell \geq N}} \ve^{j+k+\ell} S\mcal{N}(w_j,w_k,w_\ell).
	\end{align*}
	The first term cancels exactly with the remaining terms in the previous expression for $W_{\geq N}$.
	For the latter two terms, by Proposition \ref{prop:coefficient_estimates} we have
	\[
		\left\| \sum_{\substack{ 1 \leq j,k\leq N-1 \\ j+k\geq N}} \ve^{j+k} S\mcal{N}(u,w_j,w_k)~ds \right\|_{X([1,\infty))}
			\lesssim (\ve\Lambda)^N
	\]
	as long as $\ve\Lambda < 1$, and similarly
	\[
		\left\| \sum_{\substack{ 1\leq j,k,\ell\leq N-1 \\ j+k+\ell \geq N}} \ve^{j+k+\ell}S\mcal{N}(w_j,w_k,w_\ell)~ds \right\|_{X([1,\infty))}
			\lesssim (\ve\Lambda)^N.
	\]
	Thus we conclude that
	\[
		\|W_{\geq N}\|_{X([1,\infty))} \lesssim (\ve\Lambda)^N,
	\]
	and sending $N\to\infty$ proves the desired claim.
\end{proof}
Thus we have shown that the map $\Sigma\to X([1,\infty))$ that sends the scattering data $u_+$ to the solution $u:[1,\infty)\times\bb{R}^d\to\bb{C}$ depends analytically on $u_+$, and we can compute the coefficients of the power series expansion around any point $u_+$ inductively.
Consequently, the same holds for the map $\Sigma\to\Sigma: u_+\mapsto u(t=1)$.

A similar argument applies to proving the analyticity of the initial-to-scattering state operator $\mcal{S}:\Sigma\to\Sigma$.
For initial data $u\in\Sigma$ and $v\in\Sigma$ we define $u\in X([0,\infty))$ to be the global solution to HNLS with $u(0) = u_0$, and $u^\ve(t)$ to the global solution to HNLS with $u^\ve(0) = u_0 + \ve v$; these are well-defined by the existing scattering theory in $\Sigma$.
Then arguments similar to those of Propositions \ref{prop:coefficient_estimates} and \ref{prop:convergence_wave_ops} show that $u^\ve$ admits the expansion
\[
	u^\ve = u + \sum_{k\geq 1} \ve^k w_k	
\]
with $\|w_k\|_{X([0,\infty))} \lesssim \Lambda^k$ for some fixed $\Lambda$ and small $\ve$.
To finish the proof of Theorem \ref{thm:main1} it remains to show:
\begin{prop}\label{prop:convergence_scattering_ops}
	For all $k\geq 1$, the limits
	\[
		w_k^+ = \lim_{t\to\infty} e^{-it\Delta}w_k(t)	
	\]
	exist in $\Sigma$, and
	\[
		\mcal{S}(u_0 + \ve v) = u_+ + \sum_{k\geq 1} \ve^k w_k^+	
	\]
	with the latter series converging in $\Sigma$.
\end{prop}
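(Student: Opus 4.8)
The plan is to upgrade the $X([0,\infty))$-convergence of the expansion $u^\ve = u + \sum_{k\geq 1}\ve^k w_k$, already established for $\mcal{S}$, to $\Sigma$-convergence of the scattering states, by exploiting the time decay built into the coefficients $w_k$. Everything rests on the elementary identity
\[
	\|e^{-it\Delta}f(t)\|_\Sigma = \|f(t)\|_2 + \|J(t)f(t)\|_2 + \|\nabla f(t)\|_2,
\]
which follows from unitarity of $e^{-it\Delta}$, the commutation $\nabla e^{-it\Delta} = e^{-it\Delta}\nabla$, and the identity $xe^{-it\Delta} = e^{-it\Delta}J(t)$ read off from \eqref{eqn:J(t)_identity}. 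In particular $\|e^{-it\Delta}f(t)\|_\Sigma \leq \|f\|_{X([0,\infty))}$ for a.e.\ $t$, so Proposition \ref{prop:coefficient_estimates} together with the bound $a_k\lesssim (C_0a_1)^k$ gives $\sup_t\|e^{-it\Delta}w_k(t)\|_\Sigma\lesssim (C_0a_1\Lambda)^k$. Hence, once the limits $w_k^+$ are shown to exist in $\Sigma$, the series $\sum_{k\geq 1}\ve^kw_k^+$ will converge absolutely in $\Sigma$ for all sufficiently small $\ve>0$.

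First I would establish existence of the limits. Each $w_k$ satisfies a forward Duhamel identity $w_k(t) = e^{it\Delta}\phi_k - i\int_0^t e^{i(t-s)\Delta}H_k(s)\,ds$, where $\phi_0 = u_0$, $\phi_1 = v$, $\phi_k = 0$ for $k\geq 2$, and $H_k$ is the associated nonlinear term: $H_0 = T(u,u,u)$, $H_1 = S\,T(u,u,w_1)$, and $H_N = \sum_{j+m+\ell=N}T(w_j,w_m,w_\ell)$ for $N\geq 2$. Since $J(t)$ and $\nabla$ commute with $i\partial_t+\Delta$, they pass through the Duhamel operator, so
\[
	e^{-it\Delta}w_k(t) = \phi_k - i\int_0^t e^{-is\Delta}H_k(s)\,ds,
\]
and likewise with $J(s)H_k$, $\nabla H_k$ in place of $H_k$ and $xe^{-it\Delta}w_k$, $\nabla e^{-it\Delta}w_k$ on the left. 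For $t_2>t_1$, the dual Strichartz estimate of Proposition \ref{prop:Strichartz} bounds the $\Sigma$-difference between times $t_1$ and $t_2$ by $\|H_k\|_{L_t^{q'}L_x^{r'}([t_1,t_2])}$ plus its $J$- and $\nabla$-analogues; by Lemma \ref{lem:multilinear_estimate}, H\"older in time ($\frac1{q'} = \frac1q + \frac2\alpha$), and Lemma \ref{lem:commutator_decay}, these are each controlled by
\[
	\left(\int_{t_1}^{t_2}|t|^{-2\gamma/(4-\gamma)}\,dt\right)^{2/\alpha}\prod\|w_\cdot\|_{X([0,\infty))},
\]
where at most one of the three factors $\|w_\cdot\|_{X}$ carries a $J(t)$ or $\nabla$ and the two remaining factors are the ones contributing the decaying $L_t^\alpha$ norms. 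Since $\gamma>\frac43$ forces $\frac{2\gamma}{4-\gamma}>1$, the function $|t|^{-2\gamma/(4-\gamma)}$ is integrable at infinity, so this quantity tends to $0$ as $t_1,t_2\to\infty$. Thus $e^{-it\Delta}w_k(t)$ is Cauchy in $\Sigma$ and $w_k^+ = \lim_{t\to\infty}e^{-it\Delta}w_k(t)$ exists in $\Sigma$; for $k=0$ one has $w_0^+=u_+$ directly from the $\Sigma$-scattering theory of \cites{HaTs87}.

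Then I would identify the sum. Fix $N\geq 2$ and set $W_{\geq N} = u^\ve - u - \sum_{k=1}^{N-1}\ve^kw_k$; the scattering-operator analogue of Proposition \ref{prop:convergence_wave_ops} gives $\|W_{\geq N}\|_{X([0,\infty))}\lesssim(\ve\Lambda)^N$ whenever $\ve\Lambda<1$. For each $t$,
\[
	e^{-it\Delta}u^\ve(t) - u_+ - \sum_{k=1}^{N-1}\ve^kw_k^+ = \bigl(e^{-it\Delta}u(t)-u_+\bigr) + \sum_{k=1}^{N-1}\ve^k\bigl(e^{-it\Delta}w_k(t)-w_k^+\bigr) + e^{-it\Delta}W_{\geq N}(t).
\]
Letting $t\to\infty$: the left-hand side converges in $\Sigma$ to $\mcal{S}(u_0+\ve v) - u_+ - \sum_{k=1}^{N-1}\ve^kw_k^+$, since $u^\ve$ scatters in $\Sigma$ to $\mcal{S}(u_0+\ve v)$ by the existing theory; the first two groups on the right converge to $0$ by the previous step; and the last term is $\lesssim(\ve\Lambda)^N$ uniformly in $t$ by the identity of the first paragraph. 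Hence $\|\mcal{S}(u_0+\ve v) - u_+ - \sum_{k=1}^{N-1}\ve^kw_k^+\|_\Sigma\lesssim(\ve\Lambda)^N$, and sending $N\to\infty$ completes the proof. The main obstacle is the decay estimate of the second step: one must verify that the multilinear bounds used in Section \ref{sec:analyticity} on all of $[1,\infty)$ also furnish quantitative smallness on tails $[t_1,\infty)$, with the decay always carried by the two factors not acted on by $J(t)$ or $\nabla$; this is exactly where the standing hypothesis $\gamma>\frac43$ enters, through integrability of $|t|^{-2\gamma/(4-\gamma)}$ at infinity. The rest is bookkeeping — tracking which Duhamel formulation is in force and checking that $J(t)$ and $\nabla$ commute through the Duhamel operator.
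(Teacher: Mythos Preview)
Your proof is correct and follows essentially the same approach as the paper's: a Cauchy-in-$\Sigma$ argument for $e^{-it\Delta}w_k(t)$ via the dual Strichartz bound and the tail integrability of $|t|^{-2\gamma/(4-\gamma)}$, followed by identification of the sum using the $X([0,\infty))$ bound on the remainder $W_{\geq N}$. Your identification step is slightly more direct than the paper's (which simply invokes ``a similar argument to Proposition \ref{prop:convergence_wave_ops}''): you use the uniform bound $\|e^{-it\Delta}W_{\geq N}(t)\|_\Sigma \leq \|W_{\geq N}\|_{X([0,\infty))}$ together with the known $\Sigma$-scattering of $u^\ve$ to pass to the limit in one line, which is a clean way to package the same information.
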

\begin{proof}
	We claim that for each $N\geq 1$, $(e^{-it\Delta}w_N(t))_{t\geq 0}$ is Cauchy in $\Sigma$ as $t\to\infty$.
	We have
	\[
		e^{-it_1\Delta}w_N(t_1) - e^{-it_2\Delta}w_N(t_2)
			= 	\sum_{j+k+\ell = N} -i\int_{t_1}^{t_2} e^{-is\Delta} ST(w_j,w_k,w_\ell)~ds
	\]
	By \ref{eqn:J(t)_identity}, Lemma \ref{lem:multilinear_estimate}, Strichartz, and H\"older as before,
	\begin{align*}
		\|e^{-it_1\Delta}w_N(t_1) - e^{-it_2\Delta}w_N(t_2)\|_\Sigma 
			&\lesssim I(t_1,t_2)\sum_{j+k+\ell = N} \|w_j\|_{X([1,\infty))}\|w_k\|_{X([1,\infty))}\|w_\ell\|_{X([1,\infty))}
	\end{align*}
	where
	\[
		I(t_1,t_2) = \left( \int_{t_2}^{t_1} |t|^{-2\gamma/(4-\gamma)}~dt \right)^{2/\alpha}.
	\]
	Since the integral tends to $0$ as $t_1,t_2\to 0$, the claim follows.
	Therefore the sequence $(w_N^+)\subset\Sigma$ is well-defined, and for $N\geq 2$ we may write
	\[
		w_N^+ = \sum_{j+k+\ell = N} -i\int_0^\infty e^{-is\Delta} ST(w_j,w_k,w_\ell)~ds
	\]
	(with appropriate changes for $N=1$).
	Working as in the proof of Proposition \ref{prop:coefficient_estimates}, we can show that there exists a constant $\Lambda>0$ such that
	\[
		\|w_N^+\|_\Sigma \lesssim \Lambda^N.
	\]
	Therefore the series $\sum_k \ve^k w_k^+$ converges in $\Sigma$ for $\ve>0$ sufficiently small, and a similar argument to Proposition \ref{prop:convergence_wave_ops} shows that
	\[
		\mcal{S}(u_0 + \ve v) = u_+ + \sum_{k=1}^\infty \ve^k w_k^+	
	\]
	in the sense of convergence of the series in $\Sigma$ to the LHS.
\end{proof}


\section{Breakdown of analyticity}\label{sec:breakdown}

We now turn to the proof of Theorem \ref{thm:main2}.

\subsection{Breakdown at the origin}
We first consider part (1) of Theorem \ref{thm:main2}.
From here on, we abuse notation and redefine
\[
	\mcal{N}(u,v,w) = i\int_0^\infty e^{-is\Delta} T(u,v,w)~ds.	
\]

Let $\mcal{T}\in\{\mcal{S},\mcal{W}\}$, regarding it as a map $\Sigma\to L^2$, and consider its power series expansion $\mcal{T}(v) = \sum_{k\geq 1} w_k$ at $0\in\Sigma$ (for small $\|v\|_\Sigma$).
A careful accounting of the hierarchy of equations governing the coefficients shows that all even-indexed terms vanish, so
\[
	\mcal{T}(v) = v + \mcal{N}(e^{it\Delta}v,e^{it\Delta}v,e^{it\Delta}v) + \sum_{\substack{k\geq 5 \\ k~\tnm{odd}}} w_k.	
\]
To establish part (1) of Theorem \ref{thm:main2} it is enough to show:
\begin{prop}
	For any $s > \frac{5 + 5\gamma}{3+\gamma}$, we have
	\[
		\|\mcal{T}(v) - v\|_2 	\neq \mcal{O}_{L^2}(\|v\|_2^s).
	\]
\end{prop}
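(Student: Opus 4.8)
The plan is to argue by contradiction and to trace the failure of regularity to the fact that, for the mass-subcritical equation, the ``nonlinear free energy'' $\int_0^\infty Q(e^{is\Delta}\phi)\,ds$ does not decay fast enough in $\|\phi\|_2$. Suppose $\|\mcal{T}(v)-v\|_2 \lesssim \|v\|_2^s$ for all $v$ in some ball $\{\|v\|_\Sigma<\rho\}$. Using the expansion at the origin, write $\mcal{T}(v)-v = w_3(v) + e(v)$, where $w_3(v) = \mcal{N}(e^{it\Delta}v,e^{it\Delta}v,e^{it\Delta}v) = i\int_0^\infty e^{-is\Delta}T(e^{is\Delta}v,e^{is\Delta}v,e^{is\Delta}v)\,ds$ is the cubic term and $e(v) = \sum_{k\geq 5,\ k\ \text{odd}} w_k$ the remainder. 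Since the even-indexed coefficients vanish and the $w_k$ satisfy the geometric bounds of Proposition \ref{prop:coefficient_estimates}, after shrinking $\rho$ we have the crude estimate $\|e(v)\|_2 \leq \|e(v)\|_\Sigma \lesssim \|v\|_\Sigma^5$ on $\{\|v\|_\Sigma<\rho\}$.

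The key step would be to pair $\mcal{T}(v)-v$ against $v$ in $L^2$. Using unitarity of $e^{\pm is\Delta}$ and $\langle T(u,u,u),u\rangle_{L^2} = \int_{\bb{R}^d}(|x|^{-\gamma}*|u|^2)|u|^2 = 4Q(u)\geq 0$, together with Fubini — valid because $\int_0^\infty \|e^{is\Delta}v\|_r^4\,ds<\infty$, the integrand being bounded near $s=0$ by $\Sigma\hookrightarrow L^r$ and of order $|s|^{-\gamma}$ at infinity by Lemma \ref{lem:commutator_decay} (integrable since $\gamma>1$) — one gets
\[
\langle w_3(v),v\rangle_{L^2} = 4i\int_0^\infty Q(e^{is\Delta}v)\,ds ,
\]
so $|\langle w_3(v),v\rangle| = 4\int_0^\infty Q(e^{is\Delta}v)\,ds$. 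Combining the contradiction hypothesis with $|\langle e(v),v\rangle|\leq\|e(v)\|_2\|v\|_2$ then gives
\[
\int_0^\infty Q(e^{is\Delta}v)\,ds \;\lesssim\; \|v\|_2^{s+1} + \|v\|_\Sigma^5\,\|v\|_2 , \qquad \|v\|_\Sigma<\rho .
\]

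Next I would feed in a mass-invariant dilation. Fix a nonzero Schwartz function $\phi$, a fixed $\varepsilon_0>0$, and an exponent $\kappa>1$ to be chosen, and put $v_\lambda = \varepsilon_0\,\lambda^{\kappa+d/2}\phi(\lambda\,\cdot)$ with $\lambda\to 0^+$. Elementary scaling gives $\|v_\lambda\|_2\sim\varepsilon_0\lambda^\kappa$ and $\|v_\lambda\|_\Sigma\sim\varepsilon_0\lambda^{\kappa-1}$ (the $\mcal{F}H^1$-component dominates as $\lambda\to 0$, so $\kappa>1$ forces $v_\lambda$ into $\{\|v\|_\Sigma<\rho\}$ for small $\lambda$), while the $(-\gamma)$-homogeneity of $|x|^{-\gamma}$ and the quartic scaling of $Q$ give
\[
\int_0^\infty Q(e^{is\Delta}v_\lambda)\,ds = \varepsilon_0^4\,\lambda^{4\kappa+\gamma-2}\,I_0(\phi), \qquad I_0(\phi):=\int_0^\infty Q(e^{is\Delta}\phi)\,ds ,
\]
with $0<I_0(\phi)<\infty$ (the integrand is bounded and strictly positive near $s=0$, and decays like $|s|^{-\gamma}$ at infinity by the dispersive estimate and $Q(u)\lesssim\|u\|_r^4$ from Lemma \ref{lem:multilinear_estimate}). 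Plugging $v_\lambda$ into the previous display and dividing by $\varepsilon_0^4\lambda^{4\kappa+\gamma-2}$ turns the right-hand side into $\mcal{O}(\lambda^{(s-3)\kappa+2-\gamma}) + \mcal{O}(\lambda^{2\kappa-3-\gamma})$ against the fixed positive number $I_0(\phi)$ on the left. It then suffices to pick $\kappa$ making both exponents positive: the condition $2\kappa-3-\gamma>0$ (which also ensures $\kappa>1$) together with $(s-3)\kappa+2-\gamma>0$ (automatic for $s\geq 3$, and equivalent to $\kappa<\frac{2-\gamma}{3-s}$ for $s<3$) admits a common solution precisely when $\frac{3+\gamma}{2}<\frac{2-\gamma}{3-s}$, i.e. when $s>\frac{5+5\gamma}{3+\gamma}$ (note $\frac{5+5\gamma}{3+\gamma}<3$ as $\gamma<2$). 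For such $s$, fixing an admissible $\kappa$ and letting $\lambda\to 0$ forces $I_0(\phi)\leq 0$, a contradiction.

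I expect the main obstacle to be not any individual estimate but the \textbf{compatibility of the constraints on $\kappa$}: the test data must stay inside the ball on which the power-series expansion — and hence the error bound $\|e(v)\|_2\lesssim\|v\|_\Sigma^5$ — is valid, which keeps $\kappa$ above $1$ and places it in direct competition with the two decay exponents produced by the $\|v\|_2^{s+1}$ term and the error term; the window for $\kappa$ is nonempty exactly on the asserted range of $s$, which is what fixes the threshold at $\frac{5+5\gamma}{3+\gamma}$. Minor technical points are the Fubini justification and the finiteness and positivity of $I_0(\phi)$ for $v$ merely in $\Sigma$, all handled through $\Sigma\hookrightarrow L^r$, Lemma \ref{lem:commutator_decay}, and the dispersive estimate. (The crude bound on $e(v)$ is the only obstruction to a lower threshold — replacing it by a mixed $L^2$--weighted estimate for the quintic-and-higher terms would improve the exponent — but this refinement is not needed for the stated statement.)
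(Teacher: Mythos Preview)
Your proof is correct and follows essentially the same route as the paper's: pair the cubic term against $v$ to extract $\int_0^\infty Q(e^{is\Delta}v)\,ds$, bound the remainder by $\|v\|_\Sigma^5$, and test against a one-parameter mass-invariant dilation family, arriving at the identical pair of constraints $\kappa>\frac{3+\gamma}{2}$ and $\kappa<\frac{2-\gamma}{3-s}$ (your $\kappa$ is the paper's $j$, your $\lambda\to 0$ is the paper's $\sigma\to\infty$). The only cosmetic difference is that you phrase it as a contradiction while the paper argues directly that the ratio blows up.
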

\begin{proof}
	Let $v\in\Sigma$ be sufficiently small so that the expansion $\mcal{T}(v) = \sum_k w_k$ holds.
	By $L^2$-duality, Fubini, and unitarity of the free propagator $e^{it\Delta}$ we have
	\begin{align*}
		\|\mcal{T}(v) - v \|_2
			&\geq \frac{1}{\|v\|_2} |\langle \mcal{N}(e^{it\Delta}v,e^{it\Delta}v,e^{it\Delta}v), v \rangle | \|e(v)\|_2\\
			&= \frac{1}{\|v\|_2} \int_0^\infty Q(e^{is\Delta}v)~ds - \|e(v)\|_2
	\end{align*}
	where
	\[
		e(v) = 	\sum_{\substack{k\geq 5 \\ k~\tnm{odd}}} w_k.
	\]
	For fixed nonzero $v\in\Sigma$ and $\ve,\sigma>0$, we define
	\[
		v_{\ve,\sigma}(x) = \frac{\ve}{\sigma^{\frac{d}{2}}}v\left( \frac{x}{\sigma}\right).
	\]
	We will show the existence of a sequence of parameters $(\ve,\sigma)$ such that:
	\begin{enumerate}
		\item $\|v_{\ve,\sigma}\|_\Sigma \ll 1$ (so that the series expansion holds for $\mcal{T}(v_{\ve,\sigma})$);
		\item taking the limit along the sequence $(\ve,\sigma)$, we have
		\[
			\lim_{(\ve,\sigma)}\frac{1}{\|v_{\ve,\sigma}\|_2^s}\left( \frac{1}{\|v_{\ve,\sigma}\|_2} \int_0^\infty Q(e^{is\Delta}v_{\ve,\sigma})~ds - \|e(v_{\ve,\sigma})\|_2 \right) \to \infty.
		\]
	\end{enumerate}
	Since $(v_{\ve,\sigma})$ is an $L^2$-bounded sequence, the claim immediately follows.

	We will take $\ve \ll 1$ and $\sigma \gg 1$ with $\ve\sigma \ll 1$.
	The last condition keeps us in the regime of small $\|v_{\ve,\sigma}\|_\Sigma$, so that the power series expansion continues to hold.
	Then the family $(v_{\ve\sigma})$ obeys the following scalings:
	\[
		\|v_{\ve,\sigma}\|_2 \sim \ve, ~\|v_{\ve,\sigma}\|_\Sigma \sim \ve\sigma, ~\|\nabla v_{\ve,\sigma}\|_2 \sim \ve\sigma^{-1}.
	\]
	Moreover, by the parabolic scaling symmetry $e^{it\Delta}v(x) \leftrightarrow e^{i\sigma^{-2}t\Delta}v(\sigma^{-1}x)$ of the free Schr\"odinger flow,
	\[
		\int_0^\infty Q(e^{is\Delta}v_{\ve,\sigma})~ds 
			= \ve^4\sigma^{2-\gamma}\int_0^\infty Q(e^{is\Delta}v)~ds \sim_{\|v\|_\Sigma} \ve^4\sigma^{2-\gamma}.
	\]
	Here, the finiteness of the integral follows from Lemmas \ref{lem:commutator_decay}, \ref{lem:multilinear_estimate}, and the Gagliardo-Nirenberg inequality to control $\|e^{is\Delta}v\|_r$ near $t=0$.
	
	By Proposition \ref{prop:coefficient_estimates}, the error term obeys the estimate
	\[
		\|e(v_{\ve,\sigma})\|_2 \lesssim \|v_{\ve,\sigma}\|_2^5 \sim \ve^5\sigma^5.	
	\]
	Therefore
	\[
		\|\mcal{T}(v_{\ve,\sigma}) - v_{\ve,\sigma} \|_2 \gtrsim \ve^3\sigma^{2-\gamma} - \ve^5\sigma^5.
	\]
	We now take $\ve = \sigma^{-j}$ for some $j>1$ we will choose momentarily; this guarantees that  $\|v_{\ve,\sigma}\|_\Sigma \sim \ve\sigma \ll 1$ as $\sigma \gg 1$.
	Then
	\[
		\ve^3\sigma^{2-\gamma} - \ve^5\sigma^5 = \sigma^{-3j + 2 - \gamma} - \sigma^{-5j + 5} \sim \sigma^{-3j + 2 - \gamma}
	\]
	as $\sigma\to\infty$ so long as $-3j + 2 - \gamma > -5j + 5$, which is equivalent to the condition $j > \frac{3+\gamma}{2}$.
	For such $j$ and $\sigma \gg 1$, we have
	\[
		\frac{1}{\|v_{\ve,\sigma}\|_2^s}\|\mcal{T}(v_{\ve,\sigma}) - v_{\ve,\sigma} \|_2 \gtrsim \sigma^{(s-3)j+2-\gamma}.
	\]
	The RHS tends to $\infty$ as $\sigma\to\infty$ provided that $(s-3)j + (2-\gamma) > 0$.
	When $s \geq 3$, this is automatically satisfied since $\gamma < 2$; when $s< 3$, it is equivalent to the condition $j < \frac{2-\gamma}{3-s}$.
	Therefore it suffices to find a $j$ satisfying
	\[
		\frac{3+\gamma}{2} < j < \frac{2-\gamma}{3-s}.	
	\]
	Such a $j$ exists whenever $\frac{3+\gamma}{2}  < \frac{2-\gamma}{3-s}$, which is equivalent to $\frac{5+5\gamma}{3+\gamma} < s$.
\end{proof}
\begin{rem}
	This proof can be done essentially without change for the power series expansion in $\mcal{F}H^1$ as well.
\end{rem}

\subsection{Breakdown away from the origin}

We now move to part (2) of Theorem \ref{thm:main2}.
We adopt the following abuse of notation: $w_1$ refers both to the function in $X([0,\infty))$ defined by the results of Section \ref{sec:analyticity}, and also to the map
\[
	v\mapsto w_1(v) = v - S\mcal{N}(u,u,w_1).	
\]
We will use a similar convention for $w_1^+$.

To keep things concrete, let us work specifically with $\mcal{T} = \mcal{S}$; the discussion adapts easily to $\mcal{W}$.
We recall the notation
\[
	\mcal{S}(u_0+v) = \mcal{S}(u_0) + \sum_{k=1}^\infty w_k^+.	
\]
Our goal here is the following:
\begin{prop}\label{prop:breakdown_nonzero}
	There exists $R = R(d,\gamma)>0$ such that for all $u_0\in\Sigma$ satisfying $\|u_0\|_\Sigma < R$, all $\|v\|_\Sigma$ small, and all $s > \frac{4+4\gamma}{2+\gamma}$, we have
	\[
		\|\mcal{S}(u_0 + v) - \mcal{S}(u_0) - w_1^+ - w_2^+ \|_2 \neq \mcal{O}(\|v\|_2^s).
	\]
\end{prop}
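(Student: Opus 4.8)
The plan is to isolate the cubic term $w_3^+$ in the expansion $\mcal{S}(u_0+v)=\mcal{S}(u_0)+\sum_{k\ge 1}w_k^+$ and to show that it alone obstructs $s$-H\"older continuity. By Proposition~\ref{prop:coefficient_estimates} and the homogeneity of the hierarchy ($w_k^+$ is homogeneous of degree $k$ in $v$), one has $\|w_k^+\|_\Sigma\lesssim_{R,d,\gamma}(C\|v\|_\Sigma)^k$, so $\|\sum_{k\ge 4}w_k^+\|_2\lesssim\|v\|_\Sigma^4$ for $\|v\|_\Sigma$ small; note that, in contrast to the expansion at the origin, the expansion around $u_0\ne 0$ has a nonvanishing fourth-order term, so the remainder is quartic and not quintic in $v$ — this is exactly what produces the weaker exponent $\frac{4+4\gamma}{2+\gamma}$. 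Since $\mcal{S}(u_0+v)-\mcal{S}(u_0)-w_1^+-w_2^+=w_3^++\sum_{k\ge 4}w_k^+$, it suffices to produce the rescaled family $v_{\ve,\sigma}(x)=\ve\sigma^{-d/2}v(x/\sigma)$ of Section~\ref{sec:breakdown}, with $v$ a fixed nonzero element of $\Sigma$ and $\ve\ll 1\ll\sigma$, $\ve\sigma\ll 1$, along which $\|w_3^+(v_{\ve,\sigma})\|_2\gtrsim\ve^3\sigma^{2-\gamma}$, and then to optimize in the parameters. We fix $R$ small enough that the analyticity of Theorem~\ref{thm:main1} holds on $B_R(0)\subset\Sigma$ and that the bootstrap below closes.

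To bound $\|w_3^+(v)\|_2$ from below, dualize against $v$: $\|w_3^+\|_2\ge|\langle w_3^+,v\rangle|/\|v\|_2$. Writing $u$ for the fixed global solution with $u(0)=u_0$ and $w_1,w_2,w_3$ for the time-dependent hierarchy coefficients of Section~\ref{sec:analyticity}, the hierarchy gives, up to an irrelevant sign,
\[
	w_3^+=\mcal{N}(w_1,w_1,w_1)+S\mcal{N}(u,w_1,w_2)+S\mcal{N}(u,u,w_3),\qquad \mcal{N}(a,b,c)=i\int_0^\infty e^{-is\Delta}T(a(s),b(s),c(s))\,ds.
\]
By unitarity and Fubini, $\langle\mcal{N}(w_1,w_1,w_1),v\rangle=i\int_0^\infty\langle T(w_1,w_1,w_1)(s),e^{is\Delta}v\rangle\,ds$. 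Setting $\rho=w_1-e^{it\Delta}v=-S\mcal{N}(u,u,w_1)$ and using $\langle T(f,f,f),f\rangle\propto Q(f)$, the leading contribution is a fixed nonzero multiple of $i\int_0^\infty Q(e^{is\Delta}v)\,ds$, which by the parabolic rescaling equals (a fixed positive constant)$\times\,i\,\ve^4\sigma^{2-\gamma}$. Every remaining contribution to $\langle w_3^+,v\rangle$ — the term $\int_0^\infty(Q(w_1(s))-Q(e^{is\Delta}v))\,ds$, the cross term $\int_0^\infty\langle T(w_1,w_1,w_1)(s),\rho(s)\rangle\,ds$, and the two pieces $\langle S\mcal{N}(u,w_1,w_2),v\rangle$ and $\langle S\mcal{N}(u,u,w_3),v\rangle$ — carries at least one factor of the background $u$ or of the correction $\rho$ (and $\rho$ itself carries two factors of $u$). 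These are the terms absent in the expansion at the origin.

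The crux, and the only genuinely new difficulty, is that these ``nonresonant'' error terms are \emph{not} small in $L^2$: by Proposition~\ref{prop:coefficient_estimates} they are of size $O(\|v\|_\Sigma^3)=O(\ve^3\sigma^3)$, which swamps $\ve^4\sigma^{2-\gamma}$ after division by $\|v\|_2\sim\ve$. So one cannot discard them; one must show that, \emph{paired} with $v_{\ve,\sigma}$, they contribute only $o(\ve^4\sigma^{2-\gamma})$. The mechanism is a mismatch of time scales: the fixed solution $u$ obeys $\|u(s)\|_r\lesssim_{u_0}\langle s\rangle^{-\gamma/4}$ by Lemma~\ref{lem:commutator_decay} (with $r=\frac{4d}{2d-\gamma}$, $\theta=\frac\gamma4$), so it lives on the unit time scale, while $\|e^{is\Delta}v_{\ve,\sigma}\|_r\lesssim_v\ve\sigma^{-\gamma/4}\langle s/\sigma^2\rangle^{-\gamma/4}$ lives on the time scale $\sigma^2$. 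A bootstrap then gives scale-adapted pointwise-in-time $L^r$ bounds for $w_1,\rho,w_2,w_3$ — this is the ``rescaling in time so that $w_1$ behaves like a free evolution for almost all times'' of the introduction: after $s\mapsto\sigma^2 s$, the correction $\rho(\sigma^2 s)$ is driven by $\int_{\sigma^2 s}^\infty u(\tau)^2 w_1(\tau)\,d\tau$, over a window where $u$ has already dispersed. Feeding these bounds into H\"older's inequality in time — using $\gamma>\frac43$ crucially, both to make the integrals converge and to give the power savings the right sign (the prototype being $\int_0^\infty\|\rho(s)\|_r\|e^{is\Delta}v_{\ve,\sigma}(s)\|_r^3\,ds\lesssim_{u_0}\ve^4\sigma^{4-5\gamma/2}=o(\ve^4\sigma^{2-\gamma})$, the last step being exactly $4-\frac{5\gamma}2<2-\gamma\Leftrightarrow\gamma>\frac43$) — and using the coefficient estimates of Proposition~\ref{prop:coefficient_estimates}, in the Lorentz-refined form of Remark~\ref{rem:Lorentz} where an endpoint time-integrability arises, to control $w_2,w_3$ globally, one concludes $|\langle w_3^+(v_{\ve,\sigma}),v_{\ve,\sigma}\rangle|\gtrsim\ve^4\sigma^{2-\gamma}$ for $\sigma$ large, hence $\|w_3^+(v_{\ve,\sigma})\|_2\gtrsim\ve^3\sigma^{2-\gamma}$. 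The bootstrap controlling the coefficients in these scaled norms is also where $R$ must be taken small.

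Combining, $\|\mcal{S}(u_0+v_{\ve,\sigma})-\mcal{S}(u_0)-w_1^+-w_2^+\|_2\gtrsim\ve^3\sigma^{2-\gamma}-C\ve^4\sigma^4$. Put $\ve=\sigma^{-j}$, so $\|v_{\ve,\sigma}\|_\Sigma\sim\sigma^{1-j}\ll1$ and $\|v_{\ve,\sigma}\|_2\sim\sigma^{-j}$ for $j>1$. The first term dominates once $-3j+2-\gamma>-4j+4$, i.e.\ $j>2+\gamma$, giving $\gtrsim\sigma^{-3j+2-\gamma}$; then $\|v_{\ve,\sigma}\|_2^{-s}\|\mcal{S}(u_0+v_{\ve,\sigma})-\mcal{S}(u_0)-w_1^+-w_2^+\|_2\gtrsim\sigma^{(s-3)j+2-\gamma}\to\infty$ provided $(s-3)j+2-\gamma>0$, which for $s<3$ means $j<\frac{2-\gamma}{3-s}$ and for $s\ge3$ is automatic. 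Such a $j$ exists precisely when $2+\gamma<\frac{2-\gamma}{3-s}$, i.e.\ $s>\frac{4+4\gamma}{2+\gamma}$, which is the asserted range. The case $\mcal{T}=\mcal{W}$ is handled identically, using the wave-operator hierarchy of Section~\ref{sec:analyticity} and the corresponding $[0,\infty)$-in-time integral formulas. The heart of the argument is the estimate of the third paragraph: controlling the background- and correction-dependent pieces of $w_3^+$ in the pairing with $v_{\ve,\sigma}$, where they are genuinely large in $L^2$ but harmless once the parabolic scaling is tracked.
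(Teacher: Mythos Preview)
Your final numerical optimization (setting $\ve=\sigma^{-j}$, requiring $j>2+\gamma$ so the cubic term beats the quartic error, then $(s-3)j+2-\gamma>0$) matches the paper's exactly, and the overall architecture---isolate $w_3^+$, bound the remainder by $\|v\|_\Sigma^4$, produce the rescaled family---is the same. The real divergence is in how you extract the lower bound $\|w_3^+(v_{\ve,\sigma})\|_2\gtrsim\ve^3\sigma^{2-\gamma}$.

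The paper does \emph{not} dualize against $v$ and does \emph{not} argue via a time-scale mismatch in the pairing. Instead it splits $w_3^+=\mcal{N}(w_1,w_1,w_1)+S\mcal{N}(u,w_1,w_2)+S\mcal{N}(u,u,w_3)$ and treats each piece directly in $L^2$. For the resonant piece it dualizes against $w_1^+$ (not $v$): since $w_1(s)\to e^{is\Delta}w_1^+$ as $s\to\infty$ in $Y$, on $[\tau,\infty)$ one has $\langle T(w_1,w_1,w_1),e^{is\Delta}w_1^+\rangle\sim Q(e^{is\Delta}w_1^+)$ with no ``$\rho$-correction'' terms at all; one then rescales and handles the finite interval $[0,\tau]$ by Gagliardo--Nirenberg. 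For the nonresonant pieces, your assertion that they are ``genuinely large in $L^2$'' and can only be tamed in the pairing is incorrect: the paper bounds them \emph{directly} in $L^2$ by $R^2\ve^3\sigma^{2-\gamma}$, using the sharp interpolated estimate (Lemma~\ref{lem:Lorentz_optimized_estimate})
\[
\|w_1\|_{L_t^{\alpha,\infty}L_x^r}\lesssim\|v\|_2^{1-\gamma/4}\|v\|_\Sigma^{(4-\gamma)/8}\|\nabla v\|_2^{(3\gamma-4)/8}\sim\ve\sigma^{1-\gamma/2},
\]
obtained by interpolating between the $J(t)$-decay of Lemma~\ref{lem:commutator_decay} and Gagliardo--Nirenberg (choosing the weight $|t|^{-1/\alpha}$ exactly at the weak-$L^\alpha$ endpoint, which is where the Lorentz refinement is genuinely needed). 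Feeding this into the usual trilinear estimates gives $\|S\mcal{N}(u,w_1,w_2)\|_2+\|S\mcal{N}(u,u,w_3)\|_2\lesssim\|u\|_{X^*}^2\|w_1\|_{L_t^{\alpha,\infty}L_x^r}^2\|w_1\|_{L_t^{q,2}L_x^r}\lesssim R^2\ve^3\sigma^{2-\gamma}$, so taking $R$ small makes these subordinate to the resonant term. Your time-scale-mismatch pairing argument is a plausible alternative route, but it is more involved (it requires scale-adapted pointwise-in-time control of $\rho,w_2,w_3$ that you only sketch), and it is not the mechanism the paper uses.
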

This will emerge as a consequence of the following estimate on the third derivative term:
\begin{prop}\label{prop:w_3_estimate}
	There exists $R = R(d,\gamma) > 0$ such that for all  $\|u_0\|_\Sigma < R$ and for $\ve \ll 1$, $\ve\sigma \ll 1$, $\sigma \gg 1$, we have
	\[
		\|w_3^+(v_{\ve,\sigma})\|_2 \gtrsim_{d,\gamma} \ve^3\sigma^{2-\gamma}.
	\]
\end{prop}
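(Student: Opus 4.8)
The plan is to bound $\|w_3^+(v_{\ve,\sigma})\|_2$ from below by pairing against a test function adapted to the first-order coefficient, thereby isolating a main term proportional to the nonlinear free energy $\int_0^\infty Q(w_1(s))~ds$. Since $w_3^+$ is $3$-homogeneous in its argument, it suffices to take $\phi=v_{1,\sigma}=\sigma^{-d/2}v(\cdot/\sigma)$ and prove $\|w_3^+(\phi)\|_2\gtrsim\sigma^{2-\gamma}$ for $\sigma\gg 1$; note $\|\phi\|_2\sim 1$ while $\|\phi\|_\Sigma\sim\sigma$. For any $\phi\in\Sigma$, once $\|u_0\|_\Sigma<R$ is small the functions $w_1,w_2,w_3\in X([0,\infty))$ are well-defined---each solves a linear equation $(I-L)w_k=(\text{forcing built from }u\text{ and lower }w_j)$ with $Lw:=S\mcal{N}(u,u,w)$ and $\|L\|_{X\to X}\lesssim\|u\|_X^2\lesssim R^2<\tfrac{1}{2}$---and so are $w_1^+,w_3^+$, as limits $\lim_{t\to\infty}e^{-it\Delta}w_k(t)$ (by the argument of Section \ref{sec:analyticity}). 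With the notation $\mcal{N}(a,b,c)=\pm i\int_0^\infty e^{-is\Delta}T(a,b,c)(s)~ds$ of Section \ref{sec:breakdown}, $w_3^+=\mcal{N}(w_1,w_1,w_1)+S\mcal{N}(u,w_1,w_2)+S\mcal{N}(u,u,w_3)$, and I pair against $w_1^+$. Using unitarity of $e^{is\Delta}$, the identity $\langle T(w_1,w_1,w_1),w_1\rangle=4Q(w_1)$, and the decomposition $e^{is\Delta}w_1^+=w_1(s)+g(s)$ with $g(s)=\pm i\int_s^\infty e^{i(s-\tau)\Delta}ST(u,u,w_1)(\tau)~d\tau$, the cubic term contributes $\pm 4i\int_0^\infty Q(w_1(s))~ds$ modulo $\mcal{E}_A:=\pm i\int_0^\infty\langle T(w_1,w_1,w_1)(s),g(s)\rangle~ds$; since $\langle T(w_1,w_1,w_1),w_1\rangle$ is real, taking imaginary parts yields
\[
	\|w_3^+(\phi)\|_2\ \geq\ \frac{4\int_0^\infty Q(w_1(s))~ds-|\mcal{E}_A|-|\langle S\mcal{N}(u,w_1,w_2),w_1^+\rangle|-|\langle S\mcal{N}(u,u,w_3),w_1^+\rangle|}{\|w_1^+\|_2}.
\]
As $\|w_1^+\|_2=\lim_{t\to\infty}\|w_1(t)\|_2\leq\|w_1\|_{L_t^\infty L_x^2}\lesssim\|\phi\|_2+R^2\lesssim 1$ is $\sigma$-independent, it remains to prove the main term is $\gtrsim\sigma^{2-\gamma}$ and each of the other three is $\lesssim R^c\sigma^{2-\gamma}$ for some $c>0$.

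For the main term I use $\int_0^\infty Q(w_1(s))~ds\geq\int_0^{\sigma^2}Q(w_1(s))~ds$ and compare $w_1$ with $e^{is\Delta}\phi$ on $[0,\sigma^2]$. Parabolic rescaling---$e^{is\Delta}\phi=D_\sigma e^{i\sigma^{-2}s\Delta}v$ with $D_\sigma f=\sigma^{-d/2}f(\cdot/\sigma)$, and $Q(D_\sigma f)=\sigma^{-\gamma}Q(f)$---gives $\int_0^{\sigma^2}Q(e^{is\Delta}\phi)~ds=\sigma^{2-\gamma}\int_0^1 Q(e^{i\tau\Delta}v)~d\tau\gtrsim\sigma^{2-\gamma}$, using $Q(v)>0$ (positive definiteness of $|x|^{-\gamma}$) and continuity of $\tau\mapsto Q(e^{i\tau\Delta}v)$. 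The correction $h:=w_1-e^{is\Delta}\phi=S\mcal{N}(u,u,w_1)$ is absorbed using $|Q(w_1)-Q(e^{is\Delta}\phi)|\lesssim(\|e^{is\Delta}\phi\|_r^3+\|h\|_r^3)\|h\|_r$, the Strichartz bound $\|h\|_{L_t^q L_x^r}\lesssim R^2\|w_1\|_{L_t^q L_x^r}\lesssim R^2$---which, crucially, does \emph{not} see the large $\Sigma$-norm of $\phi$---the decay of $\|e^{is\Delta}\phi\|_r$ from Lemma \ref{lem:commutator_decay} (after $s=\sigma^2\tau$), and H\"older in time on the finite interval $[0,\sigma^2]$; the outcome is $\int_0^{\sigma^2}|Q(w_1)-Q(e^{is\Delta}\phi)|~ds\lesssim R^c\sigma^{2-\gamma}$, hence $\int_0^\infty Q(w_1(s))~ds\gtrsim\sigma^{2-\gamma}$ once $R$ is small.

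The three remaining terms each carry a factor of $u$, hence a power of $R$, and are bounded by $R^c\sigma^{2-\gamma}$ by H\"older in time, Lemma \ref{lem:multilinear_estimate}, and the a priori estimates $\|w_1\|_{L_t^q L_x^r}\lesssim 1$, $\|w_1\|_{L_t^\alpha L_x^r}\lesssim\sigma^{1-\gamma/2}$, $\|e^{is\Delta}w_1^+\|_{L_t^\alpha L_x^r}\lesssim\sigma^{1-\gamma/2}$, $\|g\|_{L_t^q L_x^r}\lesssim R^2$, $\|w_2\|_{L_t^q L_x^r}\lesssim R\sigma^{1-\gamma/2}$, $\|w_3\|_{L_t^q L_x^r}\lesssim\sigma^{2-\gamma}$. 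These follow by feeding into the contraction estimates of Section \ref{sec:analyticity} the two input scalings $\|e^{is\Delta}\phi\|_{L_t^q L_x^r}\lesssim\|\phi\|_2\sim 1$ and $\|e^{is\Delta}\phi\|_{L_t^\alpha L_x^r}\sim\sigma^{1-\gamma/2}$ (the latter computed by $s=\sigma^2\tau$; the pair $(\alpha,r)$ is not Strichartz-admissible and $\int_0^\infty\|e^{i\tau\Delta}v\|_r^\alpha~d\tau<\infty$ precisely because $\gamma>\tfrac{4}{3}$), combined with H\"older in time on $[0,\sigma^2]$ (which costs the harmless factor $\sigma^{2(1/\alpha-1/q)}=\sigma^{(2-\gamma)/2}$) and explicit dispersive decay on $[\sigma^2,\infty)$. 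With all of this in hand, the main term in the displayed inequality dominates once $R=R(d,\gamma)$ is chosen small, giving $\|w_3^+(\phi)\|_2\gtrsim\sigma^{2-\gamma}$; undoing the homogeneity completes the proof.

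I expect the hard part to be the bookkeeping of $\sigma$-powers in the last two paragraphs. The naive bound $\|w_1\|_X\lesssim\|\phi\|_\Sigma\sim\sigma$ propagates into error terms of size $R^c\sigma^{s}$ with $s>2-\gamma$, which swamp the main term $\sigma^{2-\gamma}$ no matter how small $R$ is. The remedy is to keep all estimates of the $v$-direction pieces in Strichartz norms, which are insensitive to $\|\phi\|_\Sigma$, and to handle the one unavoidable non-Strichartz norm $L_t^\alpha L_x^r$ by the time-rescaling $s=\sigma^2\tau$---which turns the free part into $\sigma^{1-\gamma/2}$ times a $\sigma$-independent profile---together with H\"older-in-time on the finite interval $[0,\sigma^2]$ for the nonlinear corrections; the hypothesis $\gamma>\tfrac{4}{3}$ enters exactly so that every rescaled time integral converges. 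A secondary point to respect throughout is that only $\Sigma$-regularity of $u$ is available, so the $J$- and $\nabla$-components of all correction terms must be absorbed into $\|u\|_X$ rather than estimated pointwise in $t$ via Lemma \ref{lem:commutator_decay}.
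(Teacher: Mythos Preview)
Your argument is correct, and it follows a genuinely different---and in some respects more elementary---route than the paper's.

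The paper proves Proposition~\ref{prop:w_3_estimate} by combining Proposition~\ref{prop:resonant_estimate} (a lower bound on $\|\mcal{N}(w_1,w_1,w_1)\|_2$) with Proposition~\ref{prop:nonresonant_cubic_errors} (upper bounds on the two nonresonant pieces). For the resonant term the paper compares $w_1$ with the \emph{final-state} free evolution $e^{is\Delta}w_1^+$ on a late interval $[\tau,\infty)$, then rescales $w_1^+$; you instead compare $w_1$ with the \emph{initial-data} free evolution $e^{is\Delta}\phi$ on the early interval $[0,\sigma^2]$, which has the advantage that $e^{is\Delta}\phi$ transforms exactly under parabolic rescaling (the paper's ``$w_1^+\mapsto(w_1^+)_{\ve,\sigma}$ is essentially equivalent to $v\mapsto v_{\ve,\sigma}$'' step is a bit informal by comparison). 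For the nonresonant errors the paper invokes a Lorentz-space interpolation (Lemmas~\ref{lem:Lorentz_optimized_estimate} and~\ref{lem:nabla_bootstrap}) to obtain $\|w_1\|_{L_t^{\alpha,\infty}L_x^r}\lesssim\ve\sigma^{1-\gamma/2}$, remarking that the endpoint refinement is forced because the target scaling $\ve^3\sigma^{2-\gamma}$ is sharp. You bypass Lorentz entirely: computing $\|e^{is\Delta}\phi\|_{L_t^\alpha L_x^r}$ exactly by rescaling, handling the correction $h=w_1-e^{is\Delta}\phi$ via H\"older from $L_t^qL_x^r$ on $[0,\sigma^2]$ (costing exactly $\sigma^{1-\gamma/2}$) and via Lemma~\ref{lem:commutator_decay} on $[\sigma^2,\infty)$, you land on the \emph{strong} bound $\|w_1\|_{L_t^\alpha L_x^r}\lesssim\sigma^{1-\gamma/2}$. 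Your observation that $w_3^+$ is exactly $3$-homogeneous in $v$ (so one may set $\ve=1$, the condition $\ve\sigma\ll1$ being needed only later for series convergence, not for defining $w_k$) is a clean reduction the paper does not exploit. The price you pay is that your implicit constants depend on the fixed profile $v$ through $\int_0^1 Q(e^{i\tau\Delta}v)\,d\tau$ and $\|e^{i\tau\Delta}v\|_{L_\tau^\alpha L_x^r}$; this is harmless for the application to Proposition~\ref{prop:breakdown_nonzero}, and indeed the paper's own argument carries an analogous $v$-dependence through $\tau=\tau(v)$.
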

The crux of the proof is to identify the source of the breakdown of regularity in $w_3^+$, which has the form
\[
	w_3^+ = S\mcal{N}(u,u,w_3) + S\mcal{N}(u,w_1,w_2) + \mcal{N}(w_1,w_1,w_1).	
\]
The bad behavior we seek arises from the term $\mcal{N}(w_1,w_1,w_1)$, which we expect to be dominant as it is essentially a resonant interaction.
We will first establish that this resonant term has the optimal scaling $\ve^3\sigma^{2-\gamma}$ as was the case at $u_0 = 0$, and then show that the remaining cubic terms are subdominant.

\subsubsection{The main term}

We continue to work in the regime $\ve\ll 1,\sigma\gg 1,\ve\sigma \ll 1$.
Our goal is to establish the following:
\begin{prop}\label{prop:resonant_estimate}
	There exists $R = R(d,\gamma)>0$ such that if $\|u_0\|_\Sigma < R$, then
	\[
		\|\mcal{N}(w_1(v_{\ve,\sigma}),w_1(v_{\ve,\sigma}),w_1(v_{\ve,\sigma}))\|_2 \gtrsim \ve^3\sigma^{2-\gamma}.
	\]
	for all $\sigma$ sufficiently large.
\end{prop}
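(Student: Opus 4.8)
The plan is to detect the breakdown exactly as at the origin, by testing $\mcal{N}(w_1,w_1,w_1)$ against the free data of $w_1$ in the $L^2$ inner product and controlling the deviation of $w_1$ from a genuine free evolution. Since here $\mcal{N}(w_1,w_1,w_1)=i\int_0^\infty e^{-is\Delta}T(w_1,w_1,w_1)(s)\,ds$, unitarity of $e^{is\Delta}$ gives, for any $\psi\in\Sigma$,
\[
	\|\mcal{N}(w_1,w_1,w_1)\|_2\;\geq\;\frac{1}{\|\psi\|_2}\,\Big|\int_0^\infty\langle T(w_1,w_1,w_1)(s),\,e^{is\Delta}\psi\rangle\,ds\Big|.
\]
I would take $\psi=w_1^+=v_{\ve,\sigma}-S\mcal{N}(u,u,w_1)$, so that $w_1(s)=e^{is\Delta}w_1^++\widetilde\rho(s)$ with the Duhamel tail $\widetilde\rho(s)=i\int_s^\infty e^{i(s-\tau)\Delta}ST(u,u,w_1)(\tau)\,d\tau$; for $\|u_0\|_\Sigma<R$ small one has $\|w_1^+\|_2=(1+\mcal{O}(R^2))\|v_{\ve,\sigma}\|_2\sim\ve$. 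Substituting $e^{is\Delta}w_1^+=w_1(s)-\widetilde\rho(s)$ and using $\langle T(w_1,w_1,w_1)(s),w_1(s)\rangle=4Q(w_1(s))$, the pairing equals $4\int_0^\infty Q(w_1(s))\,ds-\int_0^\infty\langle T(w_1,w_1,w_1)(s),\widetilde\rho(s)\rangle\,ds$, so it is enough to prove
\[
	\int_0^\infty Q(w_1(s))\,ds\gtrsim\ve^4\sigma^{2-\gamma},\qquad \Big|\int_0^\infty\langle T(w_1,w_1,w_1)(s),\widetilde\rho(s)\rangle\,ds\Big|\ll\ve^4\sigma^{2-\gamma}.
\]

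For the first estimate I would use that the bulk of the potential energy of a free evolution of $v_{\ve,\sigma}$ sits at the dispersive time $s\sim\sigma^2$: by parabolic scaling $Q(e^{is\Delta}v_{\ve,\sigma})=\ve^4\sigma^{-\gamma}\,Q(e^{i\sigma^{-2}s\Delta}v)$, and $Q(e^{i\tau\Delta}v)$ is continuous and strictly positive, hence bounded below on a fixed compact interval $[c,C]$, so $\int_{c\sigma^2}^{C\sigma^2}Q(e^{is\Delta}v_{\ve,\sigma})\,ds\gtrsim_v\ve^4\sigma^{2-\gamma}$. It then remains to show that $w_1(s)$ stays within a small $L^r$-fraction of $e^{is\Delta}v_{\ve,\sigma}$ throughout $s\in[c\sigma^2,C\sigma^2]$; since $Q$ is quartic this yields $Q(w_1(s))\geq(1-\mcal{O}(R^2))Q(e^{is\Delta}v_{\ve,\sigma})$ there, whence $\int_0^\infty Q(w_1)\,ds\gtrsim\ve^4\sigma^{2-\gamma}$ for $R$ small. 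The closeness comes from Lemma \ref{lem:commutator_decay} applied to $w_1(s)-e^{is\Delta}v_{\ve,\sigma}=-i\int_0^{s}e^{i(s-\tau)\Delta}ST(u,u,w_1)(\tau)\,d\tau$: Strichartz, the commutation $J(s)e^{i(s-\tau)\Delta}=e^{i(s-\tau)\Delta}J(\tau)$, the $J$-version of Lemma \ref{lem:multilinear_estimate}, and H\"older in time (via $\tfrac1{q'}=\tfrac1q+\tfrac2\alpha$, using $\|u\|_{L_t^\alpha L_x^r}\lesssim R$, which needs $\gamma>\tfrac43$) bound $\|w_1-e^{it\Delta}v_{\ve,\sigma}\|_{L_t^\infty L_x^2}\lesssim R^2\ve$ and $\|J(t)(w_1-e^{it\Delta}v_{\ve,\sigma})\|_{L_t^\infty L_x^2}\lesssim R^2\ve\sigma$, hence $\|w_1(s)-e^{is\Delta}v_{\ve,\sigma}\|_r\lesssim s^{-\theta}R^2\ve\sigma^\theta$, which for $s\sim\sigma^2$ is $\mcal{O}(R^2)$ relative to $\|e^{is\Delta}v_{\ve,\sigma}\|_r\sim\ve\sigma^{-\theta}$.

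The heart of the matter is the error integral, and this is where the rescaling-in-time idea enters. Estimating it naively through Lemma \ref{lem:multilinear_estimate} and H\"older gives a bound $\lesssim\|\widetilde\rho\|_{L_t^qL_x^r}\|w_1\|_{L_t^\alpha L_x^r}^2\|w_1\|_{L_t^qL_x^r}$, but $\|e^{it\Delta}v_{\ve,\sigma}\|_{L_t^\alpha L_x^r}\sim\ve\sigma^{(2-\gamma)/2}$ already has positive $\sigma$-growth (the exponent $\alpha$ being too integrable, so the long interval $[0,\sigma^2]$ is costly), and cubing it overshoots $\ve^4\sigma^{2-\gamma}$. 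The fix uses that the interaction producing $\widetilde\rho$ is driven by the fixed solution $u$, whose spacetime profile lives at the $O(1)$ time scale ($\|u(\tau)\|_r\lesssim\tau^{-\theta}$), far from the $\sigma^2$ scale of $e^{it\Delta}v_{\ve,\sigma}$: one fixes a threshold $T_0=T_0(u_0)$ and splits $\int_0^\infty=\int_0^{T_0}+\int_{T_0}^\infty$. On $[0,T_0]$ the integrand is controlled by compact-time Strichartz norms, which carry no $\sigma$-growth (only $\sim\ve^4$), hence is $\ll\ve^4\sigma^{2-\gamma}$. On $[T_0,\infty)$, since $\|1_{[T_0,\infty)}u\|_{L_t^\alpha L_x^r}$ is as small as we wish for $T_0$ large, $\widetilde\rho$ becomes a small $L^r$-fraction of the free evolution $e^{it\Delta}v_{\ve,\sigma}$ in the norms of Lemma \ref{lem:multilinear_estimate}, so that contribution is also $\ll\ve^4\sigma^{2-\gamma}$. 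Combining the two estimates above and dividing by $\|w_1^+\|_2\sim\ve$ gives the proposition.

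I expect the main obstacle to be this last step: making ``$w_1$ behaves like a free evolution for almost all times'' quantitative in norms strong enough to close the trilinear bound while keeping the $\sigma$-scaling in check, since $\widetilde\rho=S\mcal{N}(u,u,w_1)$ a priori inherits $\sigma$-growth through the weighted ($J$) and dispersive ($L_t^\alpha$) norms. Carrying this out requires a careful accounting of the scalings of $\|e^{it\Delta}v_{\ve,\sigma}\|_{L_t^pL_x^r}$ for $p\in\{q,\alpha,3q'\}$ together with $\|J(t)e^{it\Delta}v_{\ve,\sigma}\|$ and $\|\nabla e^{it\Delta}v_{\ve,\sigma}\|$, and a choice of $T_0$ decoupling the $O(1)$ time scale of $u$ from the $\sigma^2$ time scale of $e^{it\Delta}v_{\ve,\sigma}$; the Lorentz-space refinement of Remark \ref{rem:Lorentz} is available to smooth the endpoint bookkeeping.
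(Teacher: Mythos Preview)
Your approach is correct and close in spirit to the paper's, but the bookkeeping is organized differently.

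Both proofs test $\mcal{N}(w_1,w_1,w_1)$ against $e^{is\Delta}w_1^+$ and extract a potential-energy main term. The paper keeps $Q(e^{is\Delta}w_1^+)$ as the main object, splits the time integral at a fixed $\tau$, uses $\|w_1-e^{is\Delta}w_1^+\|_{Y([\tau,\infty))}\to0$ to make the cross terms on $[\tau,\infty)$ a small fraction of the main, and then rescales $w_1^+$ (treating $w_1^+(v_{\ve,\sigma})$ and $(w_1^+)_{\ve,\sigma}$ as interchangeable on $L^2$ and $\Sigma$); the $[0,\tau]$ piece is disposed of by Gagliardo--Nirenberg, yielding $\lesssim\tau^{2/\alpha}\ve^3$. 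You instead substitute $e^{is\Delta}w_1^+=w_1-\widetilde\rho$ so that the main term is $\int Q(w_1)\,ds$, which you compare to the genuinely scale-covariant quantity $\int Q(e^{is\Delta}v_{\ve,\sigma})\,ds$ on the resonant window $[c\sigma^2,C\sigma^2]$, and the error is isolated in a single pairing with $\widetilde\rho$. Your version is arguably more transparent about the $\sigma$-scaling, since you never need to identify $w_1^+(v_{\ve,\sigma})$ with a rescaled object.

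One clarification on your error step: the naive $L^\alpha_tL^r_x$ bound on $w_1$ via Lemma~\ref{lem:commutator_decay} gives $\ve\sigma^{\gamma/4}$, which after squaring overshoots $\sigma^{2-\gamma}$ exactly when $\gamma>4/3$ --- so your $T_0$-split alone does not close; you genuinely need the Lorentz-refined estimate $\|w_1\|_{L_t^{\alpha,\infty}L_x^r}\lesssim\ve\sigma^{(2-\gamma)/2}$ (Lemma~\ref{lem:Lorentz_optimized_estimate}) together with $L^{q,2}$ on the remaining two factors. With that in hand your $[T_0,\infty)$ error is $\lesssim\delta(T_0)^2\,\ve^4\sigma^{2-\gamma}$ (or simply $\lesssim R^2\ve^4\sigma^{2-\gamma}$ with no split at all), which is what you want. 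You flag this tool in your last paragraph, so the plan is sound; just be aware it is not optional bookkeeping but the crux of the error bound, and the same issue is implicit in the paper's argument as well.
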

When $u_0\neq 0$, the fact that $w_1$ is no longer purely a free evolution complicates the proof of this relationship; we cannot use the scaling argument for the potential energy
\[
	\int_0^\infty Q(e^{is\Delta}v)~ds	
\]
right away.
We get around this issue by using the fact that $w_1$ \ita{does} behave like a free evolution at large times, and rescaling in time so that the rescaled version of $w_1$ behaves like a free evolution at almost all times.

\begin{lem}\label{lem:shared_scaling}
	There exists $R = R(d,\gamma)>0$ such that if $\|u_0\|_\Sigma < R$, then
	\begin{align*}
		\|w_1\|_{Y([0,\infty))} 
			&\sim \|w_1^+\|_2 
			\sim \|v\|_2,\\
		\|w_1\|_{X([0,\infty))} 
			&\sim \|w_1^+\|_\Sigma 
			\sim \|v\|_\Sigma,\\
		\|\nabla w_1\|_{\infty,2}
			&\lesssim \|v\|_{H^1}.
	\end{align*}
\end{lem}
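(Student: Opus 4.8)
The plan is to show that $w_1$ is, in every norm that appears, a small perturbation of the free evolution $e^{it\Delta}v$, the smallness being quantified by the hypothesis $\|u_0\|_\Sigma < R$. Recall from Section~\ref{sec:analyticity} that $w_1$ satisfies a linearized integral equation; for $\mcal{T}=\mcal{S}$ it reads
\[
	w_1(t) = e^{it\Delta}v - i\int_0^t e^{i(t-s)\Delta} ST(u,u,w_1)(s)\,ds, \qquad w_1(0) = v,
\]
and $w_1^+ = v - i\int_0^\infty e^{-is\Delta} ST(u,u,w_1)(s)\,ds$, where $u$ denotes the background solution with $\|u\|_{X([0,\infty))} \lesssim \|u_0\|_\Sigma < R$. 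I will use the consequences, via Lemma~\ref{lem:commutator_decay} and the embedding $H^1\hookrightarrow L^r$ near $t=0$, that
\[
	\|u\|_{L_t^\alpha L_x^r([0,\infty))} \lesssim R \quad\text{and}\quad \|u\|_{L_t^{\alpha/2} L_x^{2d/(d-\gamma)}([0,\infty))} \lesssim R,
\]
the relevant time integrals converging precisely because $\gamma > \frac43$.

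First I would dispatch the equivalences not involving a gradient. For the upper bounds $\|w_1\|_Y \lesssim \|v\|_2$, $\|w_1\|_X \lesssim \|v\|_\Sigma$, $\|w_1^+\|_2 \lesssim \|v\|_2$, $\|w_1^+\|_\Sigma \lesssim \|v\|_\Sigma$, one estimates the correction $S\mcal{N}(u,u,w_1)$ by $CR^2$ times the corresponding norm of $w_1$ — by Strichartz, Lemma~\ref{lem:multilinear_estimate}, H\"older in time and Lemma~\ref{lem:commutator_decay}, exactly as in the proof of Proposition~\ref{prop:coefficient_estimates} — and then absorbs, which is legitimate since $\|w_1\|_X<\infty$ is already known. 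For the lower bounds I would just evaluate at $t=0$: since $w_1(0)=v$ and $J(0)=x$,
\[
	\|w_1\|_Y \ge \|w_1(0)\|_2 = \|v\|_2, \qquad \|w_1\|_X \ge \|w_1(0)\|_2 + \|xw_1(0)\|_2 + \|\nabla w_1(0)\|_2 \sim \|v\|_\Sigma,
\]
while $\|w_1^+\|_2 \ge \|v\|_2 - CR^2\|w_1\|_Y \gtrsim \|v\|_2$ and $\|w_1^+\|_\Sigma \ge \|v\|_\Sigma - CR^2\|w_1\|_X \gtrsim \|v\|_\Sigma$, using the upper bounds just obtained. This gives the first two lines of the lemma.

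It remains to prove $\|\nabla w_1\|_{L_t^\infty L_x^2([0,\infty))} \lesssim \|v\|_{H^1}$, which is the delicate point since it is sharper than the bound $\lesssim\|v\|_\Sigma$ coming from the $X$-estimate; I would establish the stronger statement $\|\nabla w_1\|_{Y([0,\infty))}\lesssim\|v\|_{H^1}$. Differentiating the integral equation gives $\nabla w_1 = e^{it\Delta}\nabla v - \nabla S\mcal{N}(u,u,w_1)$, and a Leibniz expansion of $\nabla[ST(u,u,w_1)]$ produces two kinds of terms. Terms with $\nabla$ on $w_1$ are bounded, as in Proposition~\ref{prop:coefficient_estimates}, by $CR^2\|\nabla w_1\|_{L_t^q L_x^r}\le CR^2\|\nabla w_1\|_Y$, and are absorbed. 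For the terms with $\nabla$ on a copy of $u$ one cannot afford $\|\nabla u\|_{L_t^\infty L_x^r}$ (which would need $u\in H^2$), so I would instead place $\nabla u$ in $L_t^q L_x^r$ (finite and $\lesssim R$, being part of $\|u\|_X$), the remaining copy of $u$ in $L_t^{\alpha/2}L_x^{2d/(d-\gamma)}$ ($\lesssim R$ as above), and the factor $w_1$ in the cheap space $L_t^\infty L_x^2$, or $L_t^q L_x^r$ for the terms in which the undifferentiated $u$ lies inside the convolution — each of these is $\lesssim\|v\|_2$. A Hardy--Littlewood--Sobolev and H\"older computation shows the spatial exponents close for $\|\cdot\|_{L_x^{r'}}$, which is where the $(d-\gamma)$-fold smoothing of the Hartree potential $|x|^{-\gamma}*$ enters, and the time exponents close because $\frac1{q'}=\frac1q+\frac2\alpha$; hence these terms are $\lesssim R^2\|v\|_2$. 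Collecting, $\|\nabla w_1\|_Y \lesssim \|\nabla v\|_2 + R^2\|v\|_2 + CR^2\|\nabla w_1\|_Y$, and one more absorption yields $\|\nabla w_1\|_Y\lesssim\|v\|_{H^1}$.

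The hard part is precisely this last step — handling the terms carrying a derivative on the background solution $u$. It forces the refined bookkeeping above, in which the smoothing of the convolution, the decay estimate of Lemma~\ref{lem:commutator_decay} applied in the exponent $2d/(d-\gamma)$, and the threshold $\gamma>\frac43$ (which is exactly what makes $\|u\|_{L_t^{\alpha/2}L_x^{2d/(d-\gamma)}}$ finite) all have to conspire so that the $w_1$-factor can be placed in a space controlled by $\|v\|_2$ alone rather than by the weighted norm $\|v\|_\Sigma$.
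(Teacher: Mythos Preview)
Your argument is correct, and for the first two equivalences it matches the paper's: bound the correction $S\mcal{N}(u,u,w_1)$ by $CR^2$ times the relevant norm of $w_1$ and absorb; your lower bounds via evaluation at $t=0$ are a harmless variant of the paper's reverse-triangle-inequality argument from $v=w_1^+-S\mcal{N}(u,u,w_1)$.

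On the third line you go further than the paper, which disposes of it with ``the other estimates follow similarly''. A literal repetition of the earlier argument --- using Lemma~\ref{lem:multilinear_estimate} with the split $\tfrac1{q'}=\tfrac1q+\tfrac2\alpha$ --- would force $w_1$ into $L_t^\alpha L_x^r$ in the terms carrying $\nabla u$, and $\|w_1\|_{\alpha,r}$ is only controlled via Lemma~\ref{lem:commutator_decay} and the $J$-norm, i.e.\ by $\|v\|_\Sigma$ rather than $\|v\|_{H^1}$. Your refined H\"older/HLS splitting, placing the undifferentiated $u$ in $L_t^{\alpha/2}L_x^{2d/(d-\gamma)}$ so that $w_1$ can sit in $L_t^\infty L_x^2\lesssim\|v\|_2$, is exactly what closes the estimate at the $H^1$ level; this is a genuine detail the paper leaves implicit. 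One quibble: your alternative placement ``$w_1$ in $L_t^q L_x^r$ for the terms in which the undifferentiated $u$ lies inside the convolution'' does not balance the time exponents as described, but it is also unnecessary --- a direct check shows the $L_t^\infty L_x^2$ placement of $w_1$ handles every term with $\nabla$ on $u$ uniformly.
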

\begin{proof}
	Writing $w_1^+ = v + S\mcal{N}(u,u,w_1)$ and arguing as usual,
	\[
		\|w_1^+\|_2 \leq \|w_1\|_{Y([0,\infty))} \lesssim \|v\|_2 + \|u\|_{X([0,\infty))}^2\|w_1\|_{Y([0,\infty))}.
	\]
	Taking $\|u_0\|_\Sigma$ small, we can make $\|u\|_{X([0,\infty))}$ arbitrarily small.
	Doing so, we find that
	\[
		\|w_1^+\|_2 \leq \|w_1\|_{Y([0,\infty))} \lesssim \|v\|_2.
	\]
	Similarly, we have
	\[
		\|v\|_2 \leq \|w_1^+\|_2 + \|u\|_{X([0,\infty))}^2\|w_1\|_{Y([0,\infty))} \lesssim \|w_1^+\|_2 + \|u\|_{X([0,\infty))}^2\|v\|_2,
	\]
	and thus taking $\|u_0\|_\Sigma$ small we obtain
	\[
		\|v\|_2 \lesssim \|w_1^+\|_2.	
	\]
	The other estimates follow similarly.
\end{proof}
In particular, we have the scaling
\[
	\|w_1(v_{\ve,\sigma})\|_{Y([0,\infty))} \sim \|w_1^+(v_{\ve,\sigma})\|_2 \sim \|(w_1^+)_{\ve,\sigma}\|_2 \sim \ve,
\]
\[
	\|w_1(v_{\ve,\sigma})\|_{X([0,\infty))} \sim \|w_1^+(v_{\ve,\sigma})\|_\Sigma \sim \|x(w_1^+)_{\ve,\sigma}\|_2\sim \ve\sigma;
\]
this will be useful because most quantities we estimate henceforth depend more directly on $w_1$ and $w_1^+$ than on $v$.

\begin{proof}[Proof of Proposition \ref{prop:resonant_estimate}]
	Fix $v\neq 0$.
	For any $\tau>0$, $L^2$-duality, Fubini, and unitarity of the free propagator we have
	\begin{align*}
		\|\mcal{N}(w_1,w_1,w_1)\|_2
			&\geq \frac{1}{\|w_1^+\|_2} \left| \int_\tau^\infty \langle T(w_1,w_1,w_1), e^{is\Delta}w_1^+ \rangle_{L_x^2}~ds \right| \\
			&\hspace{1em}- \left\| \int_0^\tau e^{-is\Delta} T(w_1,w_1,w_1)~ds\right\|_2.
	\end{align*}
	We write
	\begin{align*}
		\int_\tau^\infty \langle T(w_1,w_1,w_1), e^{is\Delta}w_1^+ \rangle_{L_x^2}~ds
			&= \int_\tau^\infty Q(e^{is\Delta}w_1^+)~ds\\
			&- \int_\tau^\infty \langle ST(w_1,w_1,e^{is\Delta}w_1^+ - w_1), e^{is\Delta}w_1^+\rangle_{L_x^2}~ds.
	\end{align*}
	Since $\|e^{is\Delta}w_1^+ - w_1\|_{Y([\tau,\infty))} \to 0$ as $\tau\to\infty$, it follows that for sufficiently large $\tau = \tau(v)$ we have
	\[
		\int_\tau^\infty \langle T(w_1,w_1,w_1), e^{is\Delta}w_1^+ \rangle_{L_x^2}~ds
		\sim \int_\tau^\infty Q(e^{is\Delta}w_1^+)~ds
	\]
	We now rescale $w_1^+ \mapsto (w_1^+)_{\ve,\sigma}$, which (at least on $L^2$ and $\Sigma$) is essentially equivalent to rescaling $v\mapsto v_{\ve,\sigma}$.
	Under this rescaling, the parabolic scaling symmetry of the free Schr\"odinger flow yields
	\[
		\int_\tau^\infty Q(e^{is\Delta}(w_1^+)_{\ve,\sigma})~ds
			= \ve^4\sigma^{2-\gamma}\int_{\tau/\sigma^2}^\infty Q(e^{is\Delta}w_1^+)~ds.
	\]
	Thinking of $\sigma$ being arbitrarily large, we estimate this by the integral over all of $[0,\infty)$.
	Using Lemma \ref{lem:multilinear_estimate} and the Gagliardo-Nirenberg inequality, this incurs an error of size
	\[
		\int_0^{\tau/\sigma^2} Q(e^{is\Delta}w_1^+)~ds 
			\lesssim \frac{\tau}{\sigma^2}(\|w_1^+\|_2^{1-\theta}\|\nabla w_1^+\|_2^\theta)^4
			\lesssim \frac{\tau}{\sigma^2}\|v\|_\Sigma^4 \ll 1.
	\]
	This establishes the $\ve^3\sigma^{2-\gamma}$ scaling on the main term.
	By Strichartz, Gagliardo-Nirenberg, and Lemma \ref{lem:shared_scaling} we find that the remainder satisfies
	\begin{align*}
		\left\|\int_0^\tau e^{-is\Delta} T(w_1,w_1,w_1)~ds\right\|_2	
			&\lesssim \|w_1\|_{L_t^\alpha L_x^r([0,T])}^2 \|w_1\|_{Y([0,T])}\\
			& \lesssim \left( \int_0^\tau (\|w_1\|_{\infty,2}^{1-\theta}\|\nabla w_1\|_{\infty,2}^\theta)^\alpha \right)^{2/\alpha}\|v\|_2\\
			&\lesssim \tau^{2/\alpha} \|v\|_{H^1}^3.
	\end{align*}
	Since we are working in the regime $\sigma\gg 1$, $\|v_{\ve,\sigma}\|_{H^1} \sim \ve$.
	Therefore rescaling yields
	\[
		\left\|\int_0^\tau e^{-is\Delta} T(w_1(v_{\ve,\sigma}),w_1(v_{\ve,\sigma}),w_1(v_{\ve,\sigma}))~ds\right\|_2
			\lesssim \tau^{2/\alpha}\ve^3.
	\]
	Since $\tau$ depends only on $v$ and $2-\gamma > 0$, this term is subdominant to $\ve^3\sigma^{2-\gamma}$ for $\sigma\gg 1$.
\end{proof}

\subsubsection{Nonresonant cubic terms}
We are left to control the nonresonant cubic error terms $\|S\mcal{N}(u,w_1,w_2)\|_2$ and $\|S\mcal{N}(u,u,w_3)\|_2$.
\begin{prop}\label{prop:nonresonant_cubic_errors}
	There exists $R = R(d,\gamma) > 0$ such that for all  $\|u_0\|_\Sigma < R$ and for $\ve \ll 1$, $\ve\sigma \ll 1$, $\sigma \gg 1$ we have
	\[
		\|S\mcal{N}(u,w_1,w_2)\|_2 + \|S\mcal{N}(u,u,w_3)\|_2
			\lesssim R^2\ve^3\sigma^{2-\gamma}.
	\]
\end{prop}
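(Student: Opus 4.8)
The plan is to reduce Proposition~\ref{prop:nonresonant_cubic_errors} to a short list of scaling estimates for $w_1$ and $w_2$ under the family $v\mapsto v_{\ve,\sigma}$, carried out in the Lorentz-refined spaces $X^*,Y^*$ of Remark~\ref{rem:Lorentz}. The standing tools are: the hypothesis $\|u_0\|_\Sigma<R\ll1$, which gives $\|u\|_{X^*([0,\infty))}\lesssim R$ and, via Lemma~\ref{lem:commutator_decay} together with the Gagliardo--Nirenberg inequality (note $\min(1,|t|^{-\theta})\in L_t^{\alpha,\infty}$ precisely because $\gamma\geq\tfrac{4}{3}$), also $\|u\|_{L_t^{\alpha,\infty}L_x^r([0,\infty))}\lesssim R$; the dual and retarded Lorentz Strichartz estimates of Proposition~\ref{prop:Strichartz}; and the multilinear bound, which by Lemma~\ref{lem:multilinear_estimate} and H\"older for Lorentz spaces takes the form $\|T(a,b,c)\|_{L_t^{q',2}L_x^{r'}}\lesssim\|a\|_{L_t^{\alpha,\infty}L_x^r}\|b\|_{L_t^{\alpha,\infty}L_x^r}\|c\|_{L_t^{q,2}L_x^r}$ with the three factors \emph{freely assignable} to the three slots. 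The key scaling lemmas are: (i) $\|w_1(v_{\ve,\sigma})\|_{L_t^{q,2}L_x^r([0,\infty))}\lesssim\ve$; (ii) $\|w_1(v_{\ve,\sigma})\|_{L_t^{\alpha,\infty}L_x^r([0,\infty))}\lesssim\ve\sigma^{1-\gamma/2}$; (iii) $\|w_2(v_{\ve,\sigma})\|_{L_t^{q,2}L_x^r([0,\infty))}\lesssim R\ve^2\sigma^{1-\gamma/2}$.

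Granting (i)--(iii), here is how I would finish. Write $w_3=S\mcal{N}(u,u,w_3)+S\mcal{N}(u,w_1,w_2)+\mcal{N}(w_1,w_1,w_1)$ and absorb $S\mcal{N}(u,u,w_3)$ into the left side, which is legitimate since $\|u\|_{X^*}\lesssim R$; thus $\|w_3\|_{Y^*([0,\infty))}$ is controlled by the other two terms. In every application of Lemma~\ref{lem:multilinear_estimate} one assigns the rescaled factors $w_1,w_2$ preferentially to the $L_t^{q,2}L_x^r$ slot and keeps two factors in the weak slots, but \emph{one never places the un-rescaled factor $u$ in the $L_t^{q,2}L_x^r$ slot}: doing so would leave two rescaled factors in the weak slots and generate the forbidden power $\sigma^{3-3\gamma/2}>\sigma^{2-\gamma}$, whereas putting $u$ in a weak slot is harmless because $\|u\|_{L_t^{\alpha,\infty}L_x^r}$ carries no power of $\sigma$. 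With this discipline, (i)--(iii) give $\|\mcal{N}(w_1,w_1,w_1)\|_{Y^*}\lesssim(\ve\sigma^{1-\gamma/2})^2\ve=\ve^3\sigma^{2-\gamma}$ and $\|S\mcal{N}(u,w_1,w_2)\|_{Y^*}\lesssim R\cdot(\ve\sigma^{1-\gamma/2})\cdot(R\ve^2\sigma^{1-\gamma/2})=R^2\ve^3\sigma^{2-\gamma}$, hence $\|w_3(v_{\ve,\sigma})\|_{Y^*([0,\infty))}\lesssim\ve^3\sigma^{2-\gamma}$. Then $\|S\mcal{N}(u,u,w_3)\|_2\lesssim\|u\|_{L_t^{\alpha,\infty}L_x^r}^2\|w_3\|_{L_t^{q,2}L_x^r}\lesssim R^2\|w_3\|_{Y^*}\lesssim R^2\ve^3\sigma^{2-\gamma}$, and $\|S\mcal{N}(u,w_1,w_2)\|_2$ is bounded by the same argument as its $Y^*$-norm. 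This proves the proposition.

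It remains to establish (i)--(iii). Estimate (i) is immediate from $w_1=e^{it\Delta}v_{\ve,\sigma}+S\mcal{N}(u,u,w_1)$: the free term is $\lesssim\|v_{\ve,\sigma}\|_2=\ve$ by Strichartz, and $\|S\mcal{N}(u,u,w_1)\|_{L_t^{q,2}L_x^r}\lesssim\|u\|_{L_t^{\alpha,\infty}L_x^r}^2\|w_1\|_{L_t^{q,2}L_x^r}\lesssim R^2\|w_1\|_{L_t^{q,2}L_x^r}$ is absorbed. Estimate (iii) then follows from (i) and (ii) in the same fashion, using $w_2=S\mcal{N}(u,u,w_2)+S\mcal{N}(u,w_1,w_1)$ and assigning, in each trilinear term of $S\mcal{N}(u,w_1,w_1)$, one $w_1$ to $L_t^{q,2}L_x^r$ and the other $w_1$ and $u$ to weak slots. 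The heart of the matter---and the step I expect to be the main obstacle---is estimate (ii). Since $L_t^{\alpha,\infty}L_x^r$ is not a Strichartz-admissible norm, the nonlinear correction $S\mcal{N}(u,u,w_1)$ cannot be estimated directly by Strichartz in this norm; instead one uses Lemma~\ref{lem:commutator_decay} for large $t$ and Gagliardo--Nirenberg for small $t$, which reduces (ii) to good control of $\|w_1(v_{\ve,\sigma})\|_{L_t^\infty L_x^2}$, $\|Jw_1(v_{\ve,\sigma})\|_{L_t^\infty L_x^2}$, and $\|\nabla w_1(v_{\ve,\sigma})\|_{L_t^\infty L_x^2}$. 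The free evolution $e^{it\Delta}v_{\ve,\sigma}$ has exactly the scalings $\ve$, $\ve\sigma$, $\ve\sigma^{-1}$ respectively, which---through the $\min(1,(\sigma^2/|t|)^\theta)$ profile they produce in Lemma~\ref{lem:commutator_decay}---yield precisely $\ve\sigma^{1-\gamma/2}$ in weak-$L^\alpha$; but $w_1$ is not a free evolution, and the subtle point is that the correction, though carrying the smallness factor $\|u\|_{X^*}^2\lesssim R^2$, is not a rescaled object and need not share these scalings. One exploits that $S\mcal{N}(u,u,w_1)$ vanishes at $t=0$ (so $\nabla w_1\approx\nabla e^{it\Delta}v_{\ve,\sigma}$ scales correctly for $|t|\lesssim1$) and runs a bootstrap on $\|w_1(v_{\ve,\sigma})\|_{L_t^{\alpha,\infty}L_x^r}$ itself, tracking $w_1$ and $S\mcal{N}(u,u,w_1)$ separately on the regimes $|t|\lesssim1$, $1\lesssim|t|\lesssim\sigma^2$, $|t|\gtrsim\sigma^2$; the crux is to verify that this perturbative correction contributes no positive power of $\sigma$ beyond $\sigma^{1-\gamma/2}$, so that the $R^2$-smallness suffices to close the bootstrap.
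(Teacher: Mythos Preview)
Your overall strategy matches the paper's exactly: reduce everything to the Lorentz trilinear estimate, always place the unrescaled factor $u$ in a weak slot, and isolate the three scaling facts (i)--(iii) for $w_1,w_2$. Your proofs of (i) and (iii) are correct, and you rightly identify (ii) as the heart of the matter. The paper also reduces everything to $\|w_1\|_{L_t^{\alpha,\infty}L_x^r}^2\|w_1\|_{L_t^{q,2}L_x^r}$ and to exactly the same scaling $\ve\sigma^{1-\gamma/2}$ for the weak factor.

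Where your sketch for (ii) falls short is the middle regime $1\lesssim|t|\lesssim\sigma^2$. On $[\sigma^2,\infty)$ the $J$-decay estimate alone already yields $\ve\sigma^{1-\gamma/2}$, and on $[0,1]$ your ``vanishing at $t=0$'' observation does control $\nabla w_1$ perturbatively. But on $[1,\sigma^2]$ neither works: Lemma~\ref{lem:commutator_decay} applied to the correction $S\mcal{N}(u,u,w_1)$ only gives $|t|^{-\gamma/4}\ve\sigma^{\gamma/4}$, whose $L_t^{\alpha,\infty}([1,\sigma^2])$ norm is $\sim\ve\sigma^{\gamma/4}\gg\ve\sigma^{1-\gamma/2}$; and Gagliardo--Nirenberg requires $\|\nabla w_1(t)\|_2\lesssim\ve\sigma^{-1}$ uniformly on $[0,\sigma^2]$, which your short-time argument does not supply. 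A bootstrap directly on $\|w_1\|_{L_t^{\alpha,\infty}L_x^r}$ does not close either, since this is not a Strichartz norm and the correction has to be re-expressed through $\nabla w_1$ or $Jw_1$ anyway. The paper resolves this with a single interpolation (Lemma~\ref{lem:Lorentz_optimized_estimate}): choose the blending parameter $A=\tfrac{4-\gamma}{2\gamma}$ so that the time weight $|t|^{-A\gamma/4}$ lands \emph{exactly} in $L_t^{\alpha,\infty}$, which produces
\[
\|w_1\|_{L_t^{\alpha,\infty}L_x^r}\lesssim\|w_1\|_{\infty,2}^{1-\gamma/4}\|Jw_1\|_{\infty,2}^{(4-\gamma)/8}\|\nabla w_1\|_{\infty,2}^{(3\gamma-4)/8}.
\]
The crucial feature is that the exponent $(3\gamma-4)/8$ on $\|\nabla w_1\|$ is \emph{strictly less than one}; this sublinearity is precisely what allows the bootstrap on $\|\nabla w_1\|_{Y}$ (Lemma~\ref{lem:nabla_bootstrap}) to close and deliver $\|\nabla w_1(v_{\ve,\sigma})\|_{\infty,2}\lesssim\ve\sigma^{-1}$ globally in time. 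Plugging the three scalings $\ve,\ \ve\sigma,\ \ve\sigma^{-1}$ into the display above then gives $\ve\sigma^{1-\gamma/2}$ on the nose. Your regime-splitting plan is morally the same object viewed differently, but without the sublinear-exponent mechanism it does not close.
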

Taking $R$ small will ensure that the main term continues to dominate the errors.
Together with Proposition \ref{prop:resonant_estimate} this immediately implies Proposition \ref{prop:w_3_estimate}.

The idea of this proof is that the estimate $\|w_1\|_{\alpha,r} \lesssim \|v\|_\Sigma$, which holds due to the fact that $|t|^{-\frac{\alpha\gamma}{4}}$ is integrable near $\infty$, is slack.
Na\"ively using it to control $\|S\mcal{N}(u,w_1,w_2)\|_2$ and $\|S\mcal{N}(u,u,w_3)\|_2$ only yields an estimate of $\ve^3\sigma^3$, which is \ita{not} subdominant to $\ve^3\sigma^{2-\gamma}$.
For small data, the estimate can be improved to one of the form $\|w_1\|_{\alpha,r} \lesssim \|v\|_\Sigma^a\|v\|_2^b\|\nabla v\|_2^c$ by relying less on Lemma \ref{lem:commutator_decay}.
This idea was also used in \cites{Lee21} to sharpen an analogous error estimate in order to recover dimensions $d=1,2,3$ in Theorem \ref{thm:pNLS_main}.
The Lorentz space refinement is used to recover an endpoint, which is necessary as the scaling $\ve^3\sigma^{2-\gamma}$ is sharp.
\begin{lem}\label{lem:Lorentz_optimized_estimate}
	There exists $R = R(d,\gamma)>0$ such that if $\|u_0\|_\Sigma < R$, then
	\begin{align*}
		\|w_1\|_{L_t^{\alpha,\infty}L_x^r([0,\infty))}
			\lesssim \|v\|_2^{1-\gamma/4}\|v\|_\Sigma^{(4-\gamma)/8}\|\nabla w_1\|_{\infty,2}^{(3\gamma-4)/8}.
	\end{align*}
\end{lem}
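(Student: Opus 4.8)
The plan is to interpolate between two pointwise-in-time bounds on $\|w_1(t)\|_r$. The first is the decay estimate of Lemma \ref{lem:commutator_decay}, which for $r = \frac{4d}{2d-\gamma}$ (so $\theta = \gamma/4$) reads
\[
	\|w_1(t)\|_r \lesssim |t|^{-\gamma/4}\|w_1(t)\|_2^{1-\gamma/4}\|J(t)w_1(t)\|_2^{\gamma/4};
\]
the second is the homogeneous Gagliardo--Nirenberg inequality
\[
	\|w_1(t)\|_r \lesssim \|w_1(t)\|_2^{1-\gamma/4}\|\nabla w_1(t)\|_2^{\gamma/4},
\]
which holds with exactly the same exponents because $r$ satisfies the Sobolev constraint $r < \frac{2d}{d-2}$ whenever $\gamma < 4$. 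Writing $\|w_1(t)\|_r = \|w_1(t)\|_r^{\lambda}\,\|w_1(t)\|_r^{1-\lambda}$ and estimating the first factor by the decay bound and the second by Gagliardo--Nirenberg, I get, for every $\lambda\in[0,1]$ and a.e.\ $t$ (recall $w_1\in X([0,\infty))$),
\[
	\|w_1(t)\|_r \lesssim |t|^{-\lambda\gamma/4}\|w_1(t)\|_2^{1-\gamma/4}\|J(t)w_1(t)\|_2^{\lambda\gamma/4}\|\nabla w_1(t)\|_2^{(1-\lambda)\gamma/4}.
\]

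Next I would choose $\lambda$ so that the time weight $|t|^{-\lambda\gamma/4}$ sits exactly at the endpoint for membership in $L_t^{\alpha,\infty}(0,\infty)$, i.e.\ $\lambda\,\tfrac{\gamma}{4}\,\alpha = 1$. Since $\alpha = \tfrac{8}{4-\gamma}$ this forces $\lambda = \tfrac{4-\gamma}{2\gamma}$, which lies in $[0,1]$ precisely because $\gamma\geq\tfrac43$; this is exactly where the lower restriction on $\gamma$ enters. With this choice one computes $\lambda\tfrac\gamma4 = \tfrac{4-\gamma}{8}$ and $(1-\lambda)\tfrac\gamma4 = \tfrac{3\gamma-4}{8}$, both nonnegative again by $\gamma\geq\tfrac43$. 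Bounding $\|w_1(t)\|_2$, $\|J(t)w_1(t)\|_2$ and $\|\nabla w_1(t)\|_2$ by their suprema in $t$, pulling these time-independent constants out of the quasinorm, and using monotonicity of the Lorentz quasinorm, I arrive at
\[
	\|w_1\|_{L_t^{\alpha,\infty}L_x^r([0,\infty))} \lesssim \big\| |t|^{-(4-\gamma)/8}\big\|_{L_t^{\alpha,\infty}(0,\infty)}\,\|w_1\|_{\infty,2}^{1-\gamma/4}\,\|J(t)w_1\|_{\infty,2}^{(4-\gamma)/8}\,\|\nabla w_1\|_{\infty,2}^{(3\gamma-4)/8},
\]
and the displayed Lorentz norm of the power weight is a finite dimensional constant exactly because $\tfrac{4-\gamma}{8}\,\alpha = 1$.

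Finally I would invoke Lemma \ref{lem:shared_scaling}: taking $R = R(d,\gamma)$ small enough, for $\|u_0\|_\Sigma < R$ it gives $\|w_1\|_{\infty,2} \leq \|w_1\|_{Y([0,\infty))} \sim \|v\|_2$ and $\|J(t)w_1\|_{\infty,2} \leq \|w_1\|_{X([0,\infty))} \sim \|v\|_\Sigma$. Leaving the factor $\|\nabla w_1\|_{\infty,2}$ untouched (rather than estimating it further by $\|v\|_{H^1}$) then yields
\[
	\|w_1\|_{L_t^{\alpha,\infty}L_x^r([0,\infty))} \lesssim \|v\|_2^{1-\gamma/4}\,\|v\|_\Sigma^{(4-\gamma)/8}\,\|\nabla w_1\|_{\infty,2}^{(3\gamma-4)/8},
\]
as claimed. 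The one genuinely delicate point is the endpoint: the weight $|t|^{-(4-\gamma)/8}$ is \emph{not} in $L_t^{\alpha}(0,\infty)$, only in the weak space $L_t^{\alpha,\infty}(0,\infty)$, so the argument cannot be run in ordinary Lebesgue spaces; any attempt to gain a strictly integrable time weight would force $\lambda$ strictly below its critical value, degrading the exponents and ultimately the sharp $\ve^3\sigma^{2-\gamma}$ scaling sought in Proposition \ref{prop:w_3_estimate}. Everything else is routine bookkeeping: one only needs the three exponents $1-\tfrac\gamma4$, $\tfrac{4-\gamma}{8}$, $\tfrac{3\gamma-4}{8}$ to be nonnegative (equivalently $\tfrac43\leq\gamma<4$), legitimizing the pointwise interpolation and the passage to suprema, and $r = \tfrac{4d}{2d-\gamma}$ to be Sobolev-admissible so that Gagliardo--Nirenberg applies; both hold throughout $\tfrac43<\gamma<2$.
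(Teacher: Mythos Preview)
Your proof is correct and follows essentially the same route as the paper: interpolate the pointwise bound from Lemma~\ref{lem:commutator_decay} with Gagliardo--Nirenberg via a parameter (your $\lambda$, the paper's $A$), choose it equal to $\tfrac{4-\gamma}{2\gamma}$ so that the resulting time weight $|t|^{-(4-\gamma)/8}$ lies exactly in $L_t^{\alpha,\infty}$, and finish with Lemma~\ref{lem:shared_scaling}. Your explicit remark that the Lorentz endpoint is essential (the weight is not in $L_t^\alpha$) is a useful addition that the paper leaves implicit.
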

\begin{proof}
	For any $A\in[0,1]$, by Lemma \ref{lem:commutator_decay} and the Gagliardo-Nirenberg inequality we have
	\[
		\|w_1(t)\|_r \lesssim |t|^{-A\gamma/4}\|w_1(t)\|_2^{1-\gamma/4}\|J(t)w_1(t)\|_2^{A\gamma/4}\|\nabla w_1(t)\|_2^{(1-A)\gamma/4}.
	\]
	Choose $A = \frac{4}{\alpha\gamma} = \frac{4-\gamma}{2\gamma}$.
	Note that for $\frac{4}{3}<\gamma<2$, $\frac{1}{2} < \frac{4-\gamma}{2\gamma}<1$.
	Since $|t|^{-A\gamma/4}\in L_t^{\alpha,\infty}([0,\infty))$ for this choice of $A$, we find that
	\begin{align*}
		\|w_1\|_{L_t^{\alpha,\infty}L_x^r([0,\infty))}
			\lesssim \|w_1(t)\|_{\infty,2}^{1-\gamma/4}\|J(t)w_1(t)\|_{\infty,2}^{(4-\gamma)/8}\|\nabla w_1(t)\|_{\infty,2}^{(3\gamma-4)/8}.
	\end{align*}
	The claim now follows from the Lorentz space version of Lemma \ref{lem:shared_scaling}; we leave the details to the reader.
\end{proof}
\begin{lem}\label{lem:nabla_bootstrap}
	There exist $R = R(d,\gamma) > 0$ and $L = L(d,\gamma) > 0$ such that for all  $\|u_0\|_\Sigma < R$ and $v\in\Sigma$ with $\|v\|_\Sigma \leq L$, we have
	\[
		\|\nabla w_1^+\|_2 + \|\nabla w_1\|_{Y([0,\infty))} \lesssim \|\nabla v\|_2.	
	\]
\end{lem}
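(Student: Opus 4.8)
\emph{Proof proposal.} The plan is to run a fixed-point/bootstrap argument for the equation satisfied by $w_1$ -- which is linear in $v$ -- in the gradient-weighted version of $Y([0,\infty))$. Working with $\mcal{T}=\mcal{S}$ as in the rest of this subsection, $w_1$ solves
\[
	w_1(t) = e^{it\Delta}v - i\int_0^t e^{i(t-s)\Delta}ST(u,u,w_1)(s)\,ds,
\]
and $w_1^+ = v - i\int_0^\infty e^{-is\Delta}ST(u,u,w_1)(s)\,ds$. Applying $\nabla$, using the Strichartz estimates, the $\nabla T$-bound of Lemma \ref{lem:multilinear_estimate}, and H\"older in time with $\tfrac1{q'}=\tfrac1q+\tfrac2\alpha$, and splitting $[0,\infty)$ into finitely many intervals on which $\|u\|_{L_t^\alpha L_x^r}$ is as small as we like -- exactly as in Section \ref{sec:analyticity}, via Lemma \ref{lem:commutator_decay} away from $t=0$ (integrability of $|t|^{-\alpha\gamma/4}$ on $[1,\infty)$ since $\gamma>\tfrac43$), Gagliardo--Nirenberg near $t=0$, and smallness of $\|u\|_{X([0,\infty))}$ for $\|u_0\|_\Sigma<R$ -- reduces the problem to bounding $\nabla\big(ST(u,u,w_1)\big)$ in $L_t^{q'}L_x^{r'}$ on a single such interval, together with the analogous bound for $w_1^+$.

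By the Leibniz rule, $\nabla\big(ST(u,u,w_1)\big)$ splits into two families. The terms in which the derivative lands on $w_1$ are of the form $(|x|^{-\gamma}*(\cdots))\nabla w_1$, hence controlled by $\|u\|_{L_t^\alpha L_x^r}^2\|\nabla w_1\|_{L_t^qL_x^r}$ and absorbed into the left-hand side once the interval is short. The terms in which the derivative lands on one of the background factors $u$ -- which together make up $D^2F(u)[\nabla u,w_1]$ and carry $w_1$ \emph{undifferentiated} -- are the heart of the matter. A crude application of Lemma \ref{lem:multilinear_estimate} and Lemma \ref{lem:shared_scaling} only bounds these by $R^2\|v\|_{H^1}$, which is not enough: the point of this lemma -- and the reason it, and not Lemma \ref{lem:shared_scaling}, is what drives the error estimates of Proposition \ref{prop:nonresonant_cubic_errors} -- is that the final bound must be by $\|\nabla v\|_2$ alone with no power of $\|v\|_{L^2}$, since $\|\nabla v_{\ve,\sigma}\|_2\sim\ve\sigma^{-1}\ll\ve\sim\|v_{\ve,\sigma}\|_{H^1}$.

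For the background-derivative terms the plan is to transfer each derivative that falls on a $u$ sitting inside the convolution onto the Riesz kernel, via $|x|^{-\gamma}*\nabla(\cdot)=(\nabla|x|^{-\gamma})*(\cdot)$ with $\nabla|x|^{-\gamma}\sim|x|^{-\gamma-1}$, and for the one remaining derivative that falls on a $u$ outside the convolution to write $(|x|^{-\gamma}*g)\nabla u=\nabla[(|x|^{-\gamma}*g)u]-((\nabla|x|^{-\gamma})*g)u$; one then recombines the resulting total derivatives with the Duhamel structure of the $w_1$-equation so that the undifferentiated $w_1$ is traded against the genuine dispersive decay of the small background solution, with the leftover pieces carrying two powers of $\|u\|_{X}\lesssim R$ and (after a Young-type reabsorption of $\nabla w_1$-dependence and addition of the linear contribution $\|e^{it\Delta}\nabla v\|_Y\lesssim\|\nabla v\|_2$) closing the estimate for $R,L$ small. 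The bound for $\|\nabla w_1^+\|_2$ then follows from the same estimates applied to the representation of $w_1^+$, or by letting $t\to\infty$. I expect the execution of this background-derivative estimate to be the main obstacle: one must organize the manipulations so that \emph{no} power of $\|v\|_{L^2}$ is ever incurred, which forces the subcriticality threshold $\gamma>\tfrac43$ and the smoothing of the Hartree potential to be used in tandem, and which requires separate treatment when $d=2$ and $\gamma>1$, since there $|x|^{-\gamma-1}$ is no longer locally integrable and the transfer of derivatives onto the kernel must be phrased through fractional derivatives.
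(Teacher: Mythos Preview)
Your proposal takes a genuinely different route from the paper, and a considerably harder one. You correctly isolate the obstruction: when the gradient lands on a background factor $u$, the undifferentiated $w_1$ appears and a na\"ive estimate only yields control by $\|v\|_{H^1}$ rather than by $\|\nabla v\|_2$. But your plan for handling this---transferring derivatives onto the Riesz kernel $|x|^{-\gamma}$, writing $(|x|^{-\gamma}*g)\nabla u=\nabla[(|x|^{-\gamma}*g)u]-((\nabla|x|^{-\gamma})*g)u$, and then ``recombining with the Duhamel structure''---is speculative, and you yourself flag serious obstacles (non-local-integrability of $|x|^{-\gamma-1}$ in $d=2$, the need to pass to fractional derivatives, and the delicate bookkeeping required so that no stray power of $\|v\|_2$ ever appears). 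None of this machinery is used, or needed, in the paper.

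The paper's argument is much shorter and rests entirely on the immediately preceding Lemma~\ref{lem:Lorentz_optimized_estimate}, which you never invoke. That lemma already converts the ``bad'' undifferentiated factor into something carrying a power of $\|\nabla w_1\|_{\infty,2}$: it gives
\[
\|w_1\|_{L_t^{\alpha,\infty}L_x^r}\ \lesssim\ \|v\|_2^{1-\gamma/4}\,\|v\|_\Sigma^{(4-\gamma)/8}\,\|\nabla w_1\|_{\infty,2}^{(3\gamma-4)/8}.
\]
So after the standard Strichartz/Leibniz step (putting $\nabla u$ in $L_t^{q}L_x^r$, $u$ in $L_t^{\alpha}L_x^r$, and the undifferentiated $w_1$ in $L_t^{\alpha,\infty}L_x^r$), the background-derivative contribution to $\|\nabla w_1\|_{Y([0,\tau))}$ is bounded by
\[
\|u\|_{X}^2\,\|w_1\|_{L_t^{\alpha,\infty}L_x^r}\ \lesssim\ R^2\,\|v\|_2^{1-\gamma/4}\|v\|_\Sigma^{(4-\gamma)/8}\,\|\nabla w_1\|_{Y([0,\tau))}^{(3\gamma-4)/8},
\]
a \emph{sublinear} power of the very quantity being estimated. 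Combined with the linear term $R^2\|\nabla w_1\|_{Y}$ coming from derivatives landing on $w_1$, one arrives at an inequality of the form
\[
X\ \le\ \|\nabla v\|_2 + \delta_1 X^{(3\gamma-4)/8} + \delta_2 X,\qquad X=\|\nabla w_1\|_{Y([0,\tau))},
\]
with $\delta_1=\delta_1(R,L)$ and $\delta_2=\delta_2(R)$ small for $R,L$ small; the paper then closes via a continuity bootstrap in $\tau$, and the bound on $\nabla w_1^+$ follows immediately from the formula for $w_1^+$. There is no kernel manipulation, no integration by parts, and no dimension-dependent casework: the whole purpose of introducing the Lorentz-refined interpolation in Lemma~\ref{lem:Lorentz_optimized_estimate} was precisely to feed into this step. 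What your approach would buy, if it could be made to work, is an argument that does not rely on that interpolation lemma; but the paper's route is both shorter and avoids the kernel-regularity issues you anticipate.
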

\begin{proof}
	By the usual estimates and Lemma \ref{lem:Lorentz_optimized_estimate}, for all $\tau>0$ we have
	\begin{align*}
		\|\nabla w_1\|_{Y([0,\tau))}
			&\lesssim_{d,\gamma} \|\nabla v\|_2 + \|u\|_{X([0,\tau))}^2(\|w_1\|_{L_t^{\alpha,\infty}L_x^r([0,\tau))} + \|\nabla w_1\|_{Y([0,\tau))})\\
			&\lesssim_{d,\gamma} \|\nabla v\|_2 + R^2(\|v\|_2^{1-\gamma/4}\|v\|_\Sigma^{(4-\gamma)/8}\|\nabla w_1\|_{Y([0,\tau))}^{(3\gamma-4)/8} + \|\nabla w_1\|_{Y([0,\tau))})\\
			&\leq \|\nabla v\|_2 + \delta_1\|\nabla w_1\|_{Y([0,\tau))}^{(3\gamma-4)/8} + \delta_2\|\nabla w_1\|_{Y([0,\tau))},
	\end{align*}
	where $\delta_1 = \delta_1(R,L)$ and $\delta_2 = \delta_2(R)$ can be made arbitrarily small by an appropriate choice of $R$ and $L$.
	The estimate for $\nabla w_1$ then follows via a bootstrap argument.
	The estimate for $\nabla w_1^+$ is then immediate from the definition of $w_1^+$.
\end{proof}
\begin{proof}[Proof of Proposition \ref{prop:nonresonant_cubic_errors}]
	By the Lorentz space adaptations of our earlier estimates, we have
	\begin{align*}
		\|S\mcal{N}(u,w_1,w_2)\|_2
			&\lesssim \|u\|_{X^*(I)}\|w_1\|_{L_t^{\alpha,\infty}L_x^r}\|w_2\|_{L_t^{q,2}L_x^r}\\
			&\lesssim \|u\|_{X^*(I)}^2\|w_1\|_{L_t^{\alpha,\infty}L_x^r}^2\|w_1\|_{L_t^{q,2}L_x^r}
	\end{align*}
	and
	\begin{align*}
		\|S\mcal{N}(u,u,w_3)\|_2
			&\lesssim \|u\|_{X^*(I)}^2\|w_3\|_{L_t^{q,2}L_x^r}\\
			&\lesssim \|u\|_{X^*(I)}^2(\|u\|_{X^*(I)}\|w_1\|_{L_t^{\alpha,\infty}L_x^r}\|w_2\|_{L_t^{q,2}L_x^r} + \|w_1\|_{L_t^{\alpha,\infty}L_x^r}^2\|w_1\|_{L_t^{q,2}L_x^r})\\
			&\lesssim \|u\|_{X^*(I)}^2(\|u\|_{X^*(I)}^2\|w_1\|_{L_t^{\alpha,\infty}L_x^r}^2\|w_1\|_{L_t^{q,2}L_x^r} + \|w_1\|_{L_t^{\alpha,\infty}L_x^r}^2\|w_1\|_{L_t^{q,2}L_x^r})\\
			&= \|u\|_{X^*(I)}^2(1 + \|u\|_{X^*(I)}^2)\|w_1\|_{L_t^{\alpha,\infty}L_x^r}^2\|w_1\|_{L_t^{q,2}L_x^r}.
	\end{align*}
	The claim now follows by taking $R = R(d,\gamma)$ small, $\|u_0\|_\Sigma<R$, replacing $w_1$ with $w_1(v_{\ve,\sigma})$, and applying Lemmas \ref{lem:shared_scaling}, \ref{lem:Lorentz_optimized_estimate}, and \ref{lem:nabla_bootstrap}.
\end{proof}

\subsubsection{Conclusion of the proof}

At last, we are ready to proceed with the proof of Proposition \ref{prop:breakdown_nonzero}, thereby completing the proof of Theorem \ref{thm:main2}.

\begin{proof}
	We define $v_{\ve,\sigma}$ as before and work in the regime $\ve\ll 1,\sigma\gg 1,\ve\sigma \ll 1$.
	We write
	\[
		\mcal{S}(u_0+v) - \mcal{S}(u_0) - w_1^+ - w_2^+ = w_3^+(v) + e(v),	
	\]
	where
	\[
		e(v) = \sum_{k\geq 4} w_k^+.	
	\]
	By Propositions \ref{prop:coefficient_estimates} and \ref{prop:w_3_estimate}, for sufficiently small $\|u_0\|_\Sigma$ we have the lower bound
	\begin{align*}
		\|\mcal{S}(u_0+v) - \mcal{S}(u_0) - w_1^+ - w_2^+\|_2
			&\geq \|w_3^+(v_{\ve,\sigma})\|_2 - \|e(v_{\ve,\sigma}\|_2\\
			&\gtrsim \ve^3\sigma^{2-\gamma} - \ve^4\sigma^4.
	\end{align*}
	We take $\ve = \sigma^{-j}$ with $j>1$ to be determined: then $\ve\sigma\ll 1$ for $\sigma\gg 1$.
	For the main term to dominate the quartic error we require $-3j + 2 - \gamma > -4j + 4$, which is equivalent to $j > 2+\gamma$ and supersedes the condition $j>1$.
	Assuming this condition, we have
	\[
		\frac{1}{\|v_{\ve,\sigma}\|^2}\|\mcal{S}(u_0+v_{\ve,\sigma}) - \mcal{S}(u_0) - w_1^+ - w_2^+\|_2	
			\gtrsim \sigma^{(s-3)j + 2 - \gamma}.
	\]
	The RHS is unbounded as $\sigma\to\infty$ as long as $(s-3)j + 2 - \gamma > 0$.
	This condition is automatically met if $s\geq3$ since $\gamma < 2$, while if $s< 3$ then it is equivalent to $j < \frac{2-\gamma}{3-s}$.
	Thus an appropriate value of $j$ can be found provided that $2+\gamma < \frac{2-\gamma}{3-s}$, which is equivalent to the condition $s > \frac{4+4\gamma}{2+\gamma}$.
\end{proof}


\section*{Acknowledgments} The author thanks his advisors Rowan Killip and Monica Vi\c san for many helpful discussions and guidance.
The author also thanks Kenji Nakanishi for suggesting the study of the Hartree NLS as a follow-up to \cites{Lee21}.
This work was supported by NSF grants 1600942 (principal investigator: Rowan Killip) and 1500707 (principal investigator: Monica Vi\c{s}an).

\bibliography{main}

@Article{HaTs87,
  author     = {Hayashi, N. and Tsutsumi, Y.},
  title      = {Scattering theory for {H}artree type equations},
  journal    = {Ann. Inst. H. Poincar\'{e} Phys. Th\'{e}or.},
  year       = {1987},
  volume     = {46},
  number     = {2},
  pages      = {187--213},
  issn       = {0246-0211},
  fjournal   = {Annales de l'Institut Henri Poincar\'{e}. Physique Th\'{e}orique},
  mrclass    = {81F05 (35P25 35Q20 81G45)},
  mrnumber   = {887147},
  mrreviewer = {Gustavo Perla Menzala},
  url        = {http://www.numdam.org/item?id=AIHPB_1987__46_2_187_0},
}

@Article{NaOz92,
  author     = {Nawa, H. and Ozawa, T.},
  title      = {Nonlinear scattering with nonlocal interaction},
  journal    = {Comm. Math. Phys.},
  year       = {1992},
  volume     = {146},
  number     = {2},
  pages      = {259--275},
  issn       = {0010-3616},
  fjournal   = {Communications in Mathematical Physics},
  mrclass    = {35P25 (35Q55 47N20 47N50)},
  mrnumber   = {1165183},
  mrreviewer = {Guo Cheng Zhu},
  url        = {http://projecteuclid.org/euclid.cmp/1104250192},
}

@InProceedings{Masaki19,
  author    = {Masaki, S.},
  title     = {On the scattering problem of mass-subcritical {H}artree equation},
  booktitle = {Asymptotic Analysis for Nonlinear Dispersive and Wave Equations},
  year      = {2019},
  pages     = {259--309},
  address   = {Tokyo, Japan},
  publisher = {Mathematical Society of Japan},
  doi       = {10.2969/aspm/08110259},
  url       = {https://doi.org/10.2969/aspm/08110259},
}

@Article{GiVe80,
  author     = {Ginibre, J. and Velo, G.},
  title      = {On a class of nonlinear {S}chr\"{o}dinger equations with nonlocal interaction},
  journal    = {Math. Z.},
  year       = {1980},
  volume     = {170},
  number     = {2},
  pages      = {109--136},
  issn       = {0025-5874},
  doi        = {10.1007/BF01214768},
  fjournal   = {Mathematische Zeitschrift},
  mrclass    = {35J10 (35P25 81C05)},
  mrnumber   = {562582},
  mrreviewer = {James S. Howland},
  url        = {https://doi.org/10.1007/BF01214768},
}

@Article{HaOz88,
  author     = {Hayashi, N. and Ozawa, T.},
  title      = {Scattering theory in the weighted {$L^2({\bf R}^n)$} spaces for some {S}chr\"{o}dinger equations},
  journal    = {Ann. Inst. H. Poincar\'{e} Phys. Th\'{e}or.},
  year       = {1988},
  volume     = {48},
  number     = {1},
  pages      = {17--37},
  issn       = {0246-0211},
  fjournal   = {Annales de l'Institut Henri Poincar\'{e}. Physique Th\'{e}orique},
  mrclass    = {35P25 (35J10 47F05 81F05)},
  mrnumber   = {947158},
  mrreviewer = {Arne Jensen},
  url        = {http://www.numdam.org/item?id=AIHPB_1988__48_1_17_0},
}

@Article{CaGa09,
  author     = {Carles, R. and Gallagher, I.},
  title      = {Analyticity of the scattering operator for semilinear dispersive equations},
  journal    = {Comm. Math. Phys.},
  year       = {2009},
  volume     = {286},
  number     = {3},
  pages      = {1181--1209},
  issn       = {0010-3616},
  doi        = {10.1007/s00220-008-0599-x},
  fjournal   = {Communications in Mathematical Physics},
  mrclass    = {35P25 (35L71 35Q55 35R30 81U30)},
  mrnumber   = {2472030},
  mrreviewer = {Tohru Ozawa},
  url        = {https://doi.org/10.1007/s00220-008-0599-x},
}

@Article{HoTz10,
  author     = {Holmer, J. and Tzirakis, N.},
  title      = {Asymptotically linear solutions in {$H^1$} of the 2-{D} defocusing nonlinear {S}chr\"{o}dinger and {H}artree equations},
  journal    = {J. Hyperbolic Differ. Equ.},
  year       = {2010},
  volume     = {7},
  number     = {1},
  pages      = {117--138},
  issn       = {0219-8916},
  doi        = {10.1142/S0219891610002049},
  fjournal   = {Journal of Hyperbolic Differential Equations},
  mrclass    = {35Q41 (35Q55)},
  mrnumber   = {2646800},
  mrreviewer = {Fumihiko Nakano},
  url        = {https://doi.org/10.1142/S0219891610002049},
}

@Article{Bo97,
  author   = {Bourgain, J.},
  title    = {Periodic {K}orteweg de {V}ries equation with measures as initial data},
  journal  = {Selecta Math. (N.S.)},
  year     = {1997},
  volume   = {3},
  number   = {2},
  pages    = {115--159},
  issn     = {1022-1824},
  doi      = {10.1007/s000290050008},
  fjournal = {Selecta Mathematica. New Series},
  mrclass  = {35Q53 (34A55 34L05 35B10 35R05)},
  mrnumber = {1466164},
  url      = {https://doi.org/10.1007/s000290050008},
}

@Article{Glassey77,
  author     = {Glassey, R. T.},
  title      = {Asymptotic behavior of solutions to certain nonlinear {S}chr\"{o}dinger-{H}artree equations},
  journal    = {Comm. Math. Phys.},
  year       = {1977},
  volume     = {53},
  number     = {1},
  pages      = {9--18},
  issn       = {0010-3616},
  fjournal   = {Communications in Mathematical Physics},
  mrclass    = {35B40 (81.35)},
  mrnumber   = {486956},
  mrreviewer = {Howard A. Levine},
  url        = {http://projecteuclid.org/euclid.cmp/1103900588},
}

@Article{Strauss81.sequel,
  author     = {Strauss, W. A.},
  title      = {Nonlinear scattering theory at low energy: sequel},
  journal    = {J. Functional Analysis},
  year       = {1981},
  volume     = {43},
  number     = {3},
  pages      = {281--293},
  issn       = {0022-1236},
  doi        = {10.1016/0022-1236(81)90019-7},
  fjournal   = {Journal of Functional Analysis},
  mrclass    = {47H20 (35P25 35Q20 47A40 81F99)},
  mrnumber   = {636702},
  mrreviewer = {James S. Howland},
  url        = {https://doi.org/10.1016/0022-1236(81)90019-7},
}

@Article{Strauss81,
  author     = {Strauss, W. A.},
  title      = {Nonlinear scattering theory at low energy},
  journal    = {J. Functional Analysis},
  year       = {1981},
  volume     = {41},
  number     = {1},
  pages      = {110--133},
  issn       = {0022-1236},
  doi        = {10.1016/0022-1236(81)90063-X},
  fjournal   = {Journal of Functional Analysis},
  mrclass    = {47H20 (35P25 35Q20 47A40 81F99)},
  mrnumber   = {614228},
  mrreviewer = {James S. Howland},
  url        = {https://doi.org/10.1016/0022-1236(81)90063-X},
}

@Article{CaWe92,
  author   = {Cazenave, T. and Weissler, F. B.},
  title    = {Rapidly decaying solutions of the nonlinear {S}chr\"{o}dinger equation},
  journal  = {Comm. Math. Phys.},
  year     = {1992},
  volume   = {147},
  number   = {1},
  pages    = {75--100},
  issn     = {0010-3616},
  fjournal = {Communications in Mathematical Physics},
  mrclass  = {35Q55 (35B40)},
  mrnumber = {1171761},
  url      = {http://projecteuclid.org/euclid.cmp/1104250527},
}

@Article{BeTa06,
  author     = {Bejenaru, I. and Tao, T.},
  title      = {Sharp well-posedness and ill-posedness results for a quadratic non-linear {S}chr\"{o}dinger equation},
  journal    = {J. Funct. Anal.},
  year       = {2006},
  volume     = {233},
  number     = {1},
  pages      = {228--259},
  issn       = {0022-1236},
  doi        = {10.1016/j.jfa.2005.08.004},
  fjournal   = {Journal of Functional Analysis},
  mrclass    = {35Q55 (35B30 35R25)},
  mrnumber   = {2204680},
  mrreviewer = {Justin A. Holmer},
  url        = {https://doi.org/10.1016/j.jfa.2005.08.004},
}

@Article{Andersson97,
  author     = {Andersson, P.},
  title      = {Characterization of pointwise {H}\"{o}lder regularity},
  journal    = {Appl. Comput. Harmon. Anal.},
  year       = {1997},
  volume     = {4},
  number     = {4},
  pages      = {429--443},
  issn       = {1063-5203},
  doi        = {10.1006/acha.1997.0219},
  fjournal   = {Applied and Computational Harmonic Analysis. Time-Frequency and Time-Scale Analysis, Wavelets, Numerical Algorithms, and Applications},
  mrclass    = {42A99 (41A25 42C15)},
  mrnumber   = {1474098},
  mrreviewer = {Peter A. McCoy},
  url        = {https://doi.org/10.1006/acha.1997.0219},
}

@Article{CaOz08,
  author     = {Carles, R. and Ozawa, T.},
  title      = {On the wave operators for the critical nonlinear {S}chr\"{o}dinger equation},
  journal    = {Math. Res. Lett.},
  year       = {2008},
  volume     = {15},
  number     = {1},
  pages      = {185--195},
  issn       = {1073-2780},
  doi        = {10.4310/MRL.2008.v15.n1.a15},
  fjournal   = {Mathematical Research Letters},
  mrclass    = {35Q55 (35B33 35P25)},
  mrnumber   = {2367183},
  mrreviewer = {Yoshihisa Nakamura},
  url        = {https://doi.org/10.4310/MRL.2008.v15.n1.a15},
}

@Article{Kita03,
  author     = {Kita, N.},
  title      = {Sharp {$L^r$} asymptotics of the small solutions to the nonlinear {S}chr\"{o}dinger equations},
  journal    = {Nonlinear Anal.},
  year       = {2003},
  volume     = {52},
  number     = {4},
  pages      = {1365--1377},
  issn       = {0362-546X},
  doi        = {10.1016/S0362-546X(02)00265-1},
  fjournal   = {Nonlinear Analysis. Theory, Methods \& Applications. An International Multidisciplinary Journal},
  mrclass    = {35Q55 (35B40)},
  mrnumber   = {1941262},
  mrreviewer = {Masahito Ohta},
  url        = {https://doi.org/10.1016/S0362-546X(02)00265-1},
}

@Article{KiOz05,
  author   = {Kita, N. and Ozawa, T.},
  title    = {Sharp asymptotic behavior of solutions to nonlinear {S}chr\"{o}dinger equations with repulsive interactions},
  journal  = {Commun. Contemp. Math.},
  year     = {2005},
  volume   = {7},
  number   = {2},
  pages    = {167--176},
  issn     = {0219-1997},
  doi      = {10.1142/S0219199705001696},
  fjournal = {Communications in Contemporary Mathematics},
  mrclass  = {35Q55 (35B40)},
  mrnumber = {2140548},
  url      = {https://doi.org/10.1142/S0219199705001696},
}

@Article{Masaki09,
  author     = {Masaki, S.},
  title      = {Asymptotic expansion of solutions to the nonlinear {S}chr\"{o}dinger equation with power nonlinearity},
  journal    = {Kyushu J. Math.},
  year       = {2009},
  volume     = {63},
  number     = {1},
  pages      = {51--82},
  issn       = {1340-6116},
  doi        = {10.2206/kyushujm.63.51},
  fjournal   = {Kyushu Journal of Mathematics},
  mrclass    = {35Q55 (35A35 35C20)},
  mrnumber   = {2522922},
  mrreviewer = {R\'{e}mi Carles},
  url        = {https://doi.org/10.2206/kyushujm.63.51},
}

@Article{MiWuZh09,
  author   = {Miao, C. and Wu, H. and Zhang, H.},
  title    = {On the real analyticity of the scattering operator for the {H}artree equation},
  journal  = {Ann. Polon. Math.},
  year     = {2009},
  volume   = {95},
  number   = {3},
  pages    = {227--242},
  issn     = {0066-2216},
  doi      = {10.4064/ap95-3-3},
  fjournal = {Annales Polonici Mathematici},
  mrclass  = {35Q55 (35P25)},
  mrnumber = {2491379},
  url      = {https://doi.org/10.4064/ap95-3-3},
}

@Article{Kishimoto18,
  author   = {Kishimoto, N.},
  title    = {A remark on norm inflation for nonlinear {S}chr\"{o}dinger equations},
  journal  = {Commun. Pure Appl. Anal.},
  year     = {2019},
  volume   = {18},
  number   = {3},
  pages    = {1375--1402},
  issn     = {1534-0392},
  doi      = {10.3934/cpaa.2019067},
  fjournal = {Communications on Pure and Applied Analysis},
  mrclass  = {35Q55 (35B30 35C20)},
  mrnumber = {3917712},
  url      = {https://doi.org/10.3934/cpaa.2019067},
}

@Article{Lee21,
  author   = {Lee, G. E.},
  title    = {Breakdown of regularity of scattering for mass-subcritical {NLS}},
  journal  = {Int. Math. Res. Not. IMRN},
  year     = {2021},
  number   = {5},
  pages    = {3571--3596},
  issn     = {1073-7928},
  doi      = {10.1093/imrn/rnaa072},
  fjournal = {International Mathematics Research Notices. IMRN},
  mrclass  = {35},
  mrnumber = {4227579},
  url      = {https://doi.org/10.1093/imrn/rnaa072},
}

\end{document}